\newtheorem{theorem}{\rm\bf Theorem}[section]
\newtheorem{proposition}[theorem]{\rm\bf Proposition}
\newtheorem{lemma}[theorem]{\rm\bf Lemma}
\newtheorem{corollary}[theorem]{\rm\bf Corollary}
\newtheorem*{proposition 1}{\rm\bf Proposition 1}
\newtheorem*{proposition 2}{\rm\bf Proposition 2}
\theoremstyle{definition}
\newtheorem{definition}[theorem]{\rm\bf Definition}
\theoremstyle{remark}
\newtheorem{remark}[theorem]{\rm\bf Remark}
\newtheorem{example}[theorem]{\rm\bf Example}
\newtheorem{problem}[theorem]{\rm\bf Problem}
\newtheorem*{case 1}{\rm\bf Case 1}
\newtheorem*{case 2}{\rm\bf Case 2}
\newtheorem*{case 3}{\rm\bf Case 3}
\newtheorem*{case 4}{\rm\bf Case 4}
\newtheorem*{subcase 1}{\rm\bf Subcase 1}
\newtheorem*{subcase 2}{\rm\bf Subcase 2}
\newtheorem*{subcase 3}{\rm\bf Subcase 3}
\newtheorem*{subcase 4}{\rm\bf Subcase 4}
\def\interieur#1{\mathord{\mathop{\kern 0pt #1}\limits^\circ}}
\newcounter{ClaimProofCounter}
\newcounter{SthProofCounter}
\newcounter{CaseCounter}
    \newenvironment{case}[1]
	{
        \refstepcounter{CaseCounter}%
        \leftline{\large{\textsl{Case #1}}}
	}
	{
	}
\newcounter{SubcaseCounter}
\title[On metrics defined by length spectrum]{On metrics defined by length spectra on Teichm\"uller spaces of surfaces with boundary}
\author{Youliang Zhong}
\address{Youliang Zhong: Department of Mathematics, Sun Yat-sen University, 510275, Guangzhou, P. R. China}
\email{zhongyl0430@gmail.com}
\author{Lixin Liu}
\address{Lixin Liu: Department of Mathematics, Sun Yat-sen University, 510275, Guangzhou, P. R. China}
\email{mcsllx@mail.sysu.edu.cn}
\author{Weixu Su}
\address{Weixu Su: Department of Mathematics, Fudan University, 200433, Shanghai, P. R. China}
\email{suwx@fudan.edu.cn}
\thanks{L. Liu and Y. Zhong are partially supported by NSFC No: 11271378; W. Su is partially supported by NSFC No: 11201078.}
\date{\today}
\begin{document}

	\begin{abstract}
		We prove that the length spectrum metric and the arc-length spectrum metric are almost-isometric
        on the $\epsilon_0$-relative part of Teichm\"uller spaces of surfaces with boundary.
	\bigskip

	\noindent Keywords: Teichm\"uller space; length spectrum metric; arc-length spectrum metric.
	
	\end{abstract}

	\maketitle
	
	\section{Introduction}\label{intro}
	
		Let $S = S_{g,p,b}$ be a connected oriented surface of genus $g\geq 0$
 with $p \geq 0$  punctures and $b \geq 0$  boundary components.
		The boundary of $S$ is denoted by $\partial S$.
		The Euler characteristic of $S$ is $\mathcal{X}(S) = 2-2g-p-b$.
		In this paper, we always assume that $g \geq 0$, $p \geq 0$, $b \geq 1$ and $\mathcal{X}(S) < 0$.

In the following, all hyperbolic metrics on $S$ are assumed to be complete and totally geodesic on the
 boundary components.
 By the assumption that $\mathcal{X}(S) < 0$, there always exist a hyperbolic metric on $S$.

		A \emph{marked hyperbolic metric $(X,f)$} is a hyperbolic metric $X$ on $S$ equipped with
an orientation-preserving homeomorphism $f: S \to X$, where $f$ maps each component of $\partial S$ to a geodesic boundary
of $X$ and maps punctures to cusps.  The \emph{reduced Teichm\"uller space} of $S$, denoted by
$\mathcal{T}(S)$, is the set of equivalence classes of marked hyperbolic metrics on $S$, where two markings
		$(X_1,f_1)$ and $(X_2,f_2)$ are  \emph{equivalent}
        if there is an isometry  $h : X_1 \to X_2$  homotopic to $f_2 \circ \ f_1^{-1}$.
		We should point out that, in this reduced theory, homotopies do not necessarily fix  $\partial S$ pointwise.
		The notion of a reduced Teichm\"uller space was introduced by Earle \cite{Earle, ES}, where he defined the space by using
quasiconformal deformations of Fuchsian groups (of the second kind).

 Since all Teichm\"uller spaces that we consider are reduced, we shall omit the word ``reduced" in this paper.
		For the sake of simplicity, we shall denote a marked hyperbolic surface $(X,f)$ or its equivalence class in $\mathcal{T}(S)$ by $X$, without explicit reference to the marking or to the equivalence relation.

	There are several natural metrics on Teichm\"uller space, e.g. the classical Teichm\"uller metric and the Weil-Petersson
metric. In this paper, we will study the length spectrum metric and the arc-length spectrum metric.
The length spectrum  metric was first studied by Sorvali \cite{S,Sorvali}, which can also be considered as the symmetrization
of an asymmetric Finsler metric defined by Thurston \cite{Thurston1998}.
The arc-length spectrum metric is new, which is defined only on
Teichm\"uller spaces of surfaces with boundary.
Both of the two above metrics are defined by using hyperbolic (or geodesic) length functions.
 There is no doubt that hyperbolic length is one of the most fundamental tools in Teichm\"uller theory.
We note that by recent works of Danciger, Gu\'eritaud and Kassel  \cite{DGK},
deformations of hyperbolic surfaces with boundary is related to Margulis spacetimes in Lorentz geometry.

\subsection{Metrics defined by length spectra.}
 To provide concrete definitions and state our results, we fix some terminology and
notation.
		
A simple closed curve on $S$ is said to be \emph{peripheral} if it is isotopic to a puncture.
		It is said to be \emph{essential} if it is neither peripheral nor isotopic to a point.
		It should be noticed that an essential closed curve may be isotopic to a boundary component.
		We denote by $\mathcal{C}(S)$ the set of homotopy classes of essential simple closed curves on $S$.
		
        An \textit{arc} on $S$ is the homeomorphic image of a closed interval which is properly embedded in $S$,
that is, the interior of the arc is in the interior of $S$ and the endpoints
of the arc lie on  $\partial S$. An arc is said to be \textit{essential} if it is not isotopic to a subset of $\partial{S}$.
		All homotopies of arcs that we consider here are relative to $\partial{S}$.
However, we don't require homotopies to fix  $\partial S$ pointwise.
		Let $\mathcal{B}(S)$ be the set of homotopy classes of essential arcs on $S$.
		
		For any $\alpha \in \mathcal{B}(S) \cup \mathcal{C}(S)$ and $X \in \mathcal{T}(S)$, we denote by $\ell_{X}(\alpha)$ the \emph{hyperbolic length} of $\alpha$, that is, the length of the geodesic representation of $\alpha$ under the hyperbolic metric $X$.

		For surfaces without boundary,  Thurston \cite{Thurston1998} defined the following asymmetric metric:

		\begin{equation*}
			\begin{split}
				d(X,Y) = \log \sup_{\alpha \in  \mathcal{C}(S)} \cfrac{\ell_Y(\alpha)}{\ell_X(\alpha)}.
			\end{split}
		\end{equation*}

For surfaces with boundary, the following asymmetric metric is a natural generalization of Thurston's formula  \cite{LPST, ALPS2} :
		\begin{equation*}
			\begin{split}
				\bar{d}(X,Y) = \log \sup_{\alpha \in \mathcal{C}(S) \cup \mathcal{B}(S)} \cfrac{\ell_Y(\alpha)}{\ell_X(\alpha)}.
			\end{split}
		\end{equation*}
Both of the above two metrics satisfy the separation axiom and triangle inequality, but none of them satisfies the symmetric condition.  	

\begin{remark}
For surfaces with boundary,  there exist (see \cite{Papado-Th2010}) distinct hyperbolic structures $X$ and $Y$ on $S$ such that for any element $\alpha\in \mathcal{C}(S)$,
$\frac{l_X (\alpha)} {l_Y(\alpha)} <1$. This implies that
  $$\log \sup_{\alpha \in \mathcal{C}(S)} \cfrac{\ell_X(\alpha)}{\ell_Y(\alpha)} \le 0.$$
As a result,  it's necessary to consider the union of closed curves and arcs in the definition of $\bar d$.
\end{remark}

\begin{definition}
 The \textit{length spectrum metric} $d_L$ on $\mathcal{T}(S)$ is defined by
		\begin{equation*}
			\begin{split}
				d_L(X,Y)
= \log \sup_{\alpha \in \mathcal{C}(S) } \{ \cfrac{\ell_X(\alpha)}{\ell_Y(\alpha)}, \   \cfrac{\ell_Y(\alpha)}{\ell_X(\alpha)} \}.
			\end{split}
		\end{equation*}
\end{definition}
\begin{definition}
The \textit{arc-length spectrum metric} $\delta_L$ on $\mathcal{T}(S)$ is defined by
		\begin{equation*}
			\begin{split}
				\delta_L (X,Y)
                &= \max \{ \bar d(X,Y), \ \bar{d}(Y,X) \} \\
				&= \log \sup_{\alpha \in \mathcal{C}(S) \cup \mathcal{B}(S)} \{ \cfrac{\ell_X(\alpha)}{\ell_Y(\alpha)}, \   \cfrac{\ell_Y(\alpha)}{\ell_X(\alpha)} \}.
			\end{split}
		\end{equation*}
\end{definition}
        The fact that $d_L$ is a  metric on $\mathcal{T}(S)$ was proved in \cite{S, Liu1999}.
        It is obvious that $d_L\leq \delta_L$.
        When $b=0$, since $\mathcal{B}(S)$ is empty, $d_L=\delta_L$.
For more works about the length spectrum metric, one refers to \cite{Rafi,Li,LSW,Liu1999,Liu2001,AlmostModuli,LPST,Papado-Th2007, Shiga}.	

\subsection{Main theorems.}
The aim of this paper is to compare the length spectrum metric with the arc-length spectrum on a large subset of $\mathcal{T}(S)$.

	\begin{definition}	
        Given $\epsilon_0 > 0$, the \emph{$\epsilon_0$-relative part} of $\mathcal{T}(S)$ is the subset of $\mathcal{T}(S)$ consisting of hyperbolic metrics
        with lengths of all boundary components bounded above by $\epsilon_0$.
        \end{definition}

In this paper we prove:

\begin{theorem}\label{lemma:first}
			
			There is a constant  $C$  depending on $\epsilon_0$ such that
			$$ d_{L}(X,Y) \leq \delta_L (X,Y)\leq d_{L }(X,Y)+ C$$
 for any $X$, $Y$ in the $\epsilon_0$-relative part of $\mathcal{T}(S)$.
			
		\end{theorem}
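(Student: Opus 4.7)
The plan is as follows. The inequality $d_L(X,Y) \leq \delta_L(X,Y)$ is immediate from the definitions, since the supremum defining $\delta_L$ is taken over a larger set. For the reverse direction, I will associate to each essential arc $\alpha \in \mathcal{B}(S)$ a closed curve $\gamma_\alpha \in \mathcal{C}(S)$, depending only on the topological type of $\alpha$, and compare $\ell_X(\alpha)$ with $\ell_X(\gamma_\alpha)$ and the lengths of the boundary components carrying the endpoints of $\alpha$. All three are lengths of essential closed curves, so their $X/Y$ ratios are controlled by $d_L(X,Y)$, and this control will transfer to the arc-length ratio.

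\textbf{Construction of $\gamma_\alpha$.} Let $b_1, b_2 \subset \partial S$ be the boundary components (possibly equal) carrying the endpoints of $\alpha$, and let $N$ be a regular neighborhood of $\alpha \cup b_1 \cup b_2$ in $S$. Then $N$ is topologically a pair of pants, whose boundary components not contained in $\partial S$ form a single simple closed curve when $b_1 \neq b_2$ and two such curves when $b_1 = b_2$. Take $\gamma_\alpha$ to be one of them; if $\gamma_\alpha$ is peripheral this causes no trouble, since then $\ell_X(\gamma_\alpha) = \ell_Y(\gamma_\alpha) = 0$.

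\textbf{Trigonometric estimate.} In $X$, replace $\alpha, b_1, b_2, \gamma_\alpha$ by their geodesic representatives; these bound an embedded hyperbolic pair of pants $P_X$ in which $\alpha$ is the orthogonal seam from $b_1$ to $b_2$. Writing $a = \ell_X(\alpha)$, $g = \ell_X(\gamma_\alpha)$, $\beta_i = \ell_X(b_i)$, the right-angled hexagon identity gives
$$\cosh a \;=\; \frac{\cosh(g/2) + \cosh(\beta_1/2)\cosh(\beta_2/2)}{\sinh(\beta_1/2)\sinh(\beta_2/2)}$$
(with an analogous identity in the self-connecting case $b_1 = b_2$). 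Using $\cosh(\beta_1/2)\cosh(\beta_2/2) - \sinh(\beta_1/2)\sinh(\beta_2/2) = \cosh((\beta_1-\beta_2)/2) \geq 1$ and $\beta_i \leq \epsilon_0$, I will derive both a uniform comparison
$$\Bigl|\, a - \tfrac{g}{2} - \log\tfrac{1}{\beta_1\beta_2} \,\Bigr| \;\leq\; C_0(\epsilon_0)$$
and a positive lower bound $a \geq a_{\min}(\epsilon_0) > 0$, coming from $\cosh a \geq 1 + 2/\sinh^2(\epsilon_0/2)$.

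\textbf{Conclusion.} Set $K = e^{d_L(X,Y)}$. Since $\gamma_\alpha, b_1, b_2 \in \mathcal{C}(S)$, $\ell_X(\gamma_\alpha) \leq K\,\ell_Y(\gamma_\alpha)$ and $\ell_X(b_i) \geq \ell_Y(b_i)/K$. Substituting the $X$-upper and $Y$-lower bounds of the previous step, and exploiting $\beta_i^Y \leq \epsilon_0$ in the resulting algebra, yields $\ell_X(\alpha) \leq K\,\ell_Y(\alpha) + D(\epsilon_0)\,K$; dividing by $\ell_Y(\alpha) \geq a_{\min}$ then gives $\ell_X(\alpha)/\ell_Y(\alpha) \leq K e^C$ with $C = C(\epsilon_0)$. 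Swapping $X$ and $Y$ and taking suprema gives $\delta_L(X,Y) \leq d_L(X,Y) + C$. The main technical obstacle is the trigonometric estimate in the case $b_1 = b_2$, where the seam from a boundary to itself obeys a slightly different pair-of-pants identity; I expect the same type of $\sinh/\cosh$ bound to yield an estimate of the same shape, but the algebra is somewhat more intricate. A secondary subtlety is that $\log\frac{1}{\beta_1\beta_2}$ can be negative when $\epsilon_0 > 1$, which is handled by a short case split on the sign of that term.
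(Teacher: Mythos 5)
Your proposal follows the same basic strategy as the paper -- replace the arc $\alpha$ by the boundary curve(s) of a regular neighborhood of $\alpha\cup b_1\cup b_2$ and compare lengths via pair-of-pants trigonometry -- but your execution is genuinely more direct. The paper splits the arc into two collar pieces plus a middle piece, proves a lower bound and an additive comparison for the middle piece (Lemmas 3.4--3.5) and a logarithmic estimate for the collar pieces (Lemma 3.7), and then runs a seven-case multiplicative comparison; moreover it treats $\epsilon_0<e^{-1}\ln(1+\sqrt2)$ and general $\epsilon_0$ separately (Sections 3 and 4). Your single additive estimate $\bigl|a-\tfrac g2-\log\tfrac1{\beta_1\beta_2}\bigr|\le C_0(\epsilon_0)$, together with the uniform lower bound $a\ge a_{\min}(\epsilon_0)>0$, packages all of that at once: it is exactly the content of the paper's decomposition ($d_i\approx\log\frac{1}{\ell(\gamma)}$, $b_i'\approx a_i$) summed up, and the conversion ``multiplicative control on $g,\beta_i$ $\Rightarrow$ additive control on $a$ $\Rightarrow$ multiplicative control on $a$ via the lower bound'' replaces the paper's case analysis cleanly. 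I checked the hexagon identity, the bound $\cosh a\ge 1+2/\sinh^2(\epsilon_0/2)$, and the final algebra (including the sign split on $\log\frac1{\beta_1\beta_2}$, using $\log\frac1{\beta_1^Y\beta_2^Y}\ge-2\log\epsilon_0$ and $\log K\le K$); these all go through, and the peripheral degenerate case $g=0$ is harmless as you say.

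The one place where your proposal is genuinely incomplete is the self-connecting case $b_1=b_2$, which you defer. There the relevant identity is the pentagon relation $\cosh(\ell(\alpha_{\mathrm{closed}})/2)=\sinh c'\,\sinh(\ell(\alpha)/2)$, where $c'$ is the length of the piece of $b_1$ cut off by the seams -- \emph{not} the length of any closed curve -- so your scheme needs the extra geometric input that $c'$ is comparable to $\ell(b_1)$. The paper gets $\ell(b_1)/4\le c'\le\ell(b_1)/2$ by observing that the longer of the two new pants curves $\alpha_{\mathrm{closed}},\alpha'_{\mathrm{closed}}$ sits over the longer sub-arc of $b_1$; one must also handle the fact that which of the two is longer may differ between $X$ and $Y$ (the paper's Remark 3.9 and the end of the proof of Lemma 3.12). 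With that lemma in hand your additive formula becomes $\ell_X(\alpha)\approx\ell_X(\alpha_{\mathrm{closed}})+2\log\frac1{\ell_X(b_1)}+O(1)$ and the rest of your argument runs unchanged, so this is a fillable gap rather than a flaw -- but it is the one step you have not actually verified.
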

The left-hand side inequality follows by definition. The right-hand side inequality is equivalent to the following result:
		\begin{theorem}\label{lemma:key}
			There exists a positive constant $K$ depending on $\epsilon_0$ such that
			\begin{equation*}
				\begin{split}
					\sup_{\beta \in \mathcal{C}(S) \bigcup \mathcal{B}(S)} \{ \cfrac{\ell_{X_1}(\beta)}{\ell_{X_2}(\beta)}, \cfrac{\ell_{X_2}(\beta)}{\ell_{X_1}(\beta)} \}
					&\leq K \cdot \sup_{\alpha \in \mathcal{C}(S)} \{ \cfrac{\ell_{X_1}(\alpha)}{\ell_{X_2}(\alpha)}, \cfrac{\ell_{X_2}(\alpha)}{\ell_{X_1}(\alpha)} \}
				\end{split}
			\end{equation*}	
 for any $X_1$, $X_2$ in the $\epsilon_0$-relative part of $\mathcal{T}(S)$.		
		\end{theorem}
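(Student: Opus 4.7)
The plan is to associate to each essential arc $\beta \in \mathcal{B}(S)$ one (or two) essential simple closed curves $\alpha_\beta \in \mathcal{C}(S)$ whose hyperbolic length is explicitly related to that of $\beta$ via the trigonometry of pairs of pants, and then to exploit the fact that all boundary components of $S$ themselves lie in $\mathcal{C}(S)$ to promote this additive comparison into the multiplicative bound asserted by the theorem.

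Concretely, one takes a regular neighborhood $N_\beta$ of $\beta$ together with the boundary components of $S$ meeting $\beta$: the frontier of $N_\beta$ in the interior of $S$ consists of one essential simple closed curve $\alpha_\beta$ if $\beta$ joins two distinct boundary components $\partial_i, \partial_j$, and of two essential simple closed curves $\alpha_\beta^{(1)}, \alpha_\beta^{(2)}$ otherwise. In the first case $N_\beta$ is a pair of pants with boundary $\partial_i \cup \partial_j \cup \alpha_\beta$ in which $\beta$ is the unique common perpendicular between $\partial_i$ and $\partial_j$, so the right-angled hexagon formula gives
\[
\cosh(\ell_X(\beta)) \;=\; \frac{\cosh(\ell_X(\alpha_\beta)/2) + \cosh(\ell_X(\partial_i)/2)\cosh(\ell_X(\partial_j)/2)}{\sinh(\ell_X(\partial_i)/2)\sinh(\ell_X(\partial_j)/2)}.
\]
A direct asymptotic analysis of this identity, carried out separately in the regimes $\ell_X(\alpha_\beta) \to \infty$ and $\ell_X(\alpha_\beta) \to 0$ under the constraint $\ell_X(\partial_i), \ell_X(\partial_j) \leq \epsilon_0$, yields a uniform additive comparison
\[
\bigl|\,2\ell_X(\beta) - \ell_X(\alpha_\beta) + 2\log\bigl(\ell_X(\partial_i)\,\ell_X(\partial_j)\bigr)\,\bigr| \;\leq\; C_0(\epsilon_0),
\]
valid for every $X$ in the $\epsilon_0$-relative part and uniformly in $\beta$; the same identity also supplies a uniform positive lower bound $\ell_X(\beta) \geq c(\epsilon_0) > 0$. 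The second case is handled by a parallel argument using the two associated curves.

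Finally, set $L = \sup_{\alpha \in \mathcal{C}(S)} \max\{\ell_{X_1}(\alpha)/\ell_{X_2}(\alpha),\, \ell_{X_2}(\alpha)/\ell_{X_1}(\alpha)\}$. Since every boundary component and every $\alpha_\beta$ lies in $\mathcal{C}(S)$, one has $\ell_{X_1}(\alpha_\beta) \leq L\,\ell_{X_2}(\alpha_\beta)$ and $|\log \ell_{X_1}(\partial_k) - \log \ell_{X_2}(\partial_k)| \leq \log L$ for every boundary component $\partial_k$. Substituting these inequalities (and using $L \geq 1$) into the additive comparison above and dividing by $\ell_{X_2}(\beta) \geq c(\epsilon_0) > 0$ yields $\ell_{X_1}(\beta)/\ell_{X_2}(\beta) \leq K(\epsilon_0)\cdot L$, and symmetrically for the reversed ratio; taking $\sup$ over $\beta$ gives the theorem. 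The main obstacle will be establishing the uniform constant $C_0(\epsilon_0)$ in the additive comparison: one must keep the error bounded by a quantity depending only on $\epsilon_0$ both as $\ell_X(\alpha_\beta)$ ranges over $[0,\infty)$ and as the boundary lengths range over $(0,\epsilon_0]$, which requires matching the two asymptotic regimes along the entire range, and, in the self-boundary case, a separate (but analogous) hexagon computation.
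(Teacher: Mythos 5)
Your proposal is correct in substance and rests on the same key construction as the paper (replace the arc $\beta$ by the boundary curve $\alpha_\beta$ of a regular neighborhood of $\beta$ together with the boundary components it meets, use the right-angled hexagon/pentagon relations in the resulting pair of pants, and exploit the fact that boundary components of $S$ are themselves elements of $\mathcal{C}(S)$), but you organize the argument quite differently. The paper cuts $\beta$ into two collar pieces and a middle piece, proves the additive comparison $|a_i-b_i'|\leq D$ only for the middle piece and the estimate $d_i\asymp \log(c_0'/c_i)$ for the collar pieces, and then converts to a multiplicative bound via a seven-case analysis on the sizes of $b_i'$ and $a_i$; it must also treat $\epsilon_0<e^{-1}\ln(1+\sqrt2)$ and the general $\epsilon_0$ separately, because its collars of prescribed inner-boundary length only exist when the boundary is short. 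Your single global additive comparison $2\ell_X(\beta)=\ell_X(\alpha_\beta)-2\log\bigl(\ell_X(\partial_i)\ell_X(\partial_j)\bigr)+O_{\epsilon_0}(1)$ (which does hold uniformly, since $\cosh t\asymp e^t$ on $[0,\infty)$ and $\sinh c\asymp c$, $\cosh c\asymp 1$ on $(0,\epsilon_0/2]$ with constants depending only on $\epsilon_0$, and $\cosh\ell_X(\beta)\geq\coth^2(\epsilon_0/2)$ gives the lower bound $c(\epsilon_0)$) absorbs all of this into one inequality and one division, and in particular removes the need for the small-$\epsilon_0$/large-$\epsilon_0$ dichotomy; the conversion $\log L\leq L-1$ together with $L\geq 1$ does yield $\ell_{X_1}(\beta)/\ell_{X_2}(\beta)\leq K(\epsilon_0)L$ as you claim. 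The one place where ``a parallel argument'' conceals real work is the self-boundary case: there the pentagon relation gives $\sinh(\ell_X(\beta)/2)\sinh c'=\cosh(\ell_X(\alpha)/2)$ where $c'$ is the length of one of the two sub-arcs into which $\beta$ cuts $\gamma$, not $\ell_X(\gamma)$ itself, so you need the paper's observation that the sub-arc facing the longer of $\alpha,\alpha'$ has length between $\ell_X(\gamma)/4$ and $\ell_X(\gamma)/2$, and your additive comparison then involves $\max\{\ell_X(\alpha),\ell_X(\alpha')\}$ --- which may be realized by different curves on $X_1$ and on $X_2$, a point you should address explicitly (it is harmless, since the maximum of the two lengths is still distorted by at most a factor $L$). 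You should also note the degenerate case where $\alpha_\beta$ is peripheral, so $\ell_X(\alpha_\beta)=0$; in your formulation this is actually easier than in the paper's, since the term simply drops out of the additive comparison on both sides.
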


\begin{remark}
Recall that  a map $f: M\rightarrow N$ between metric spaces is
called a \emph{$(\lambda,C)$ quasi-isometry} (with given constants $C\geq 0$ and $\lambda\geq 1$)
if
$$\frac{1}{\lambda}d_M(x,y) - C\leq d_N(f(x),f(y))\leq \lambda d_M(x,y) + C$$
for all $x,y\in M$, and the $C$-neighborhood of $f(M)$ in $N$ is all of $N$.  An $(1,C)$ quasi-isometry is called an \emph{almost-isometry}.

 Theorem \ref{lemma:first} implies that the length spectrum metric and the arc-length spectrum metric are almost-isometric on the $\epsilon_0$-relative part of $\mathcal{T}(S)$.
 \end{remark}

 For  $0 < \epsilon < \epsilon_0$, the \emph{$\epsilon$-thick part} of $\mathcal{T}(S)$ is the subset of $\mathcal{T}(S)$ consisting of hyperbolic metrics $X$ with hyperbolic length $\ell_X(\alpha)$  not less than $\epsilon$ for all $\alpha \in \mathcal{C}(S)$.
		The intersection of the $\epsilon$-thick part and the $\epsilon_0$-relative part of $\mathcal{T}(S)$ is called the \emph{$\epsilon_0$-relative $\epsilon$-thick part} of $\mathcal{T}(S)$.
  We can deduce from \cite[Theorem 3.6]{LPST} that the length spectrum metric and the arc-length spectrum metric are almost-isometric  on the $\epsilon_0$-relative $\epsilon$-thick part  of $\mathcal{T}(S)$.
In fact, by \cite[Proposition 3.5]{LPST}, there exists a positive constant $K_0$ depending on $\epsilon$ and $\epsilon_0$ such that, for any $X_1$, $X_2$ in the $\epsilon$-thick $\epsilon_0$-relative part of $\mathcal{T}(S)$,

\begin{equation}\label{eq:couter}
\sup_{\beta \in \mathcal{C}(S) \bigcup \mathcal{B}(S)} \{ \cfrac{\ell_{X_2}(\beta)}{\ell_{X_1}(\beta)} \}
					\leq K_0 \cdot \sup_{\alpha \in \mathcal{C}(S)} \{ \cfrac{\ell_{X_2}(\alpha)}{\ell_{X_1}(\alpha)} \}.
\end{equation}
			However, the above inequality does not hold on the whole $\epsilon_0$-relative part of $\mathcal{T}(S)$.
A counter example is constructed at the end of Section \ref{ProofTheorem2}
(Example \ref{example:Nielsen}).
As a result, Theorem \ref{lemma:first}  can be seen as an extension of \cite[Theorem 3.6]{LPST}.

\begin{remark} We should mention that, in the statement of \cite[Proposition 3.5]{LPST} the constant $K_0$ depends on $\epsilon$, $\epsilon_0$ and the topology of $S$. But during the  proof of \eqref{eq:couter}, the constant $K_0$ only depend on $\epsilon$ and $\epsilon_0$. Similarly, the constants $C$ and $K$ in Theorem \ref{lemma:first} and Theorem \ref{lemma:key}  are independent of
the topology of the surface $S$.
\end{remark}

       \subsection{Outline of the paper.} In Section 2 we will recall some elementary results in hyperbolic geometry
        that we need later. The proof of Theorem \ref{lemma:key} will be given in Section 3 and Section 4. In Section 3, we
       deal with the case where the constant $\epsilon_0$ is sufficiently small.
       In Section 4, we use the results in Section 3 to prove Theorem \ref{lemma:key} in the general case.

        To prove Theorem \ref{lemma:key}, we will use the technique of ``replacing an arc by a loop"
        to show that the length ratio of  an arc can be controlled by the length ratio of some appropriated simple closed curve.
        Such an idea was initiated by Minsky \cite{Minsky} and it has many applications (see, e.g., Rafi \cite{Rafi}).

        We will discuss related results on moduli spaces and on
         surfaces of infinite type in Section 5.

 \subsection{Acknowledgements}
 The authors would like to thank
the referee for many corrections and useful suggestions.
	
	\section{Preliminaries}\label{Preliminaries}

		\subsection{Formulae for right-angle pentagon and hexagon.}
		For a right-angled pentagon on the hyperbolic plane with consecutive side lengths $a$, $b$, $\alpha$, $c$ and $\beta$, as in Figure $1$, we have
		\begin{equation}\label{eq:RightAngledPentagon}
			\cosh c = \sinh a \ \sinh b.
		\end{equation}
		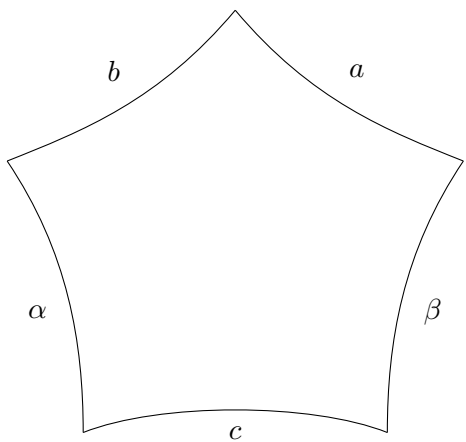
\begin{figure}[ht]\label{fig:pentagon}
			\begin{tikzpicture}[scale=0.4]

				\draw		(0,0)	node	{$c$};
				\draw		(-5,0)	..controls	(-2.5,1)	and	(2.5,1)..	(5,0);
				\draw		(6.5,4)	node	{$\beta$};
				\draw		(5,0)	..controls	(5,3)	and	(5.5,6)..	(7.5,9);
				\draw		(-6.5,4)	node	{$\alpha$};
				\draw		(-5,0)	..controls	(-5,3)	and	(-5.5,6)..	(-7.5,9);
				\draw		(4,12)	node	{$a$};
				\draw		(0,14)	..controls	(2.5,11)	and	(5,10)..	(7.5,9);
				\draw		(-4,12)	node	{$b$};
				\draw		(0,14)	..controls	(-2.5,11)	and	(-5,10)..	(-7.5,9);

			\end{tikzpicture}
			\caption{\small{An example of pentagon.}}
		\end{figure}

		We also need the following formula for a right-angled hexagon with consecutive side lengths $a$, $\gamma$, $b$, $\alpha$, $c$ and $\beta$,
 as in Figure $2$ :
		\begin{equation}\label{eq:RightAngledHexagon}
			\cosh c + \cosh a \cosh b = \sinh a \sinh b \cosh \gamma.
		\end{equation}
		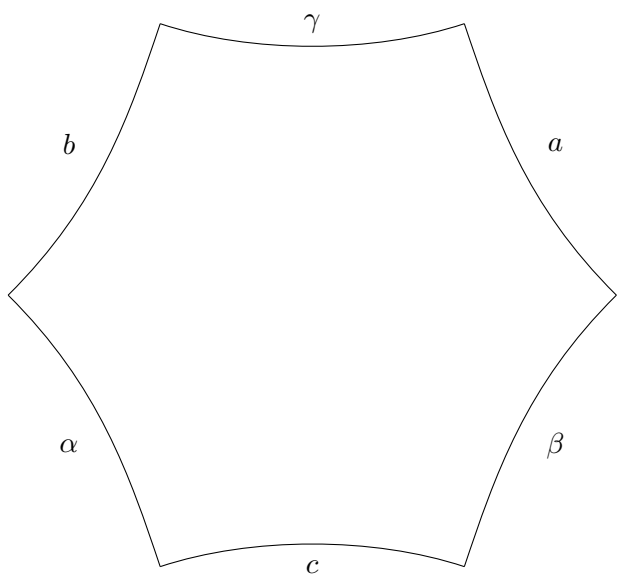
\begin{figure}[ht]\label{fig:Hexagon}
			\begin{tikzpicture}[scale=0.4]
				\draw		(8,5)	node	{$a$};
				\draw		(10,0)	..controls	(7,3)	and	(6,6)..	(5,9);
				\draw		(-8,5)	node	{$b$};
				\draw		(-10,0)	..controls	(-7,3)	and	(-6,6)..	(-5,9);
				\draw		(0,9)	node	{$\gamma$};
				\draw		(5,9)	..controls	(2,8)	and	(-2,8)..	(-5,9);

				\draw		(8,-5)	node	{$\beta$};
				\draw		(10,0)	..controls	(7,-3)	and	(6,-6)..	(5,-9);
				\draw		(-8,-5)	node	{$\alpha$};
				\draw		(-10,0)	..controls	(-7,-3)	and	(-6,-6)..	(-5,-9);
				\draw		(0,-9)	node	{$c$};
				\draw		(5,-9)	..controls	(2,-8)	and	(-2,-8)..	(-5,-9);

			\end{tikzpicture}
			\caption{\small{An example of hexagon. }}
		\end{figure}

		The inverse hyperbolic sine function and the inverse hyperbolic cosine function are given by
		\begin{equation}\label{eq:Arsinh}
			{\sinh^{-1}} \ x = \ln (x + \sqrt{x^2+1})
		\end{equation}
		and
		\begin{equation}\label{eq:Arcosh}
			{\cosh^{-1}} \ x = \ln (x + \sqrt{x^2-1}) \ ,\text{ for } x \geq 1.
		\end{equation}

		Let $f$ and $g$ be any two functions defined on  a set $U$. We call $f \asymp g$ if
 there exists a positive constant $C$ such that
		\begin{equation*}
				C^{-1} \cdot f(\tau) \leq g(\tau) \leq C \cdot f(\tau), \ \forall \ \tau \in U .
		\end{equation*}
		Usually the constant $C$ will depend on the choice of $U$.

Given $\epsilon_0>0$, we have
  $x \asymp \sinh x$ if $x \leq \epsilon_0$, and
 $\sinh x \asymp e^{x}$ if $x \geq \epsilon_0$.
Here it is obvious that the multiplicative constants for $\asymp$ depend on $\epsilon_0$.

        \subsection{Regular annulus.}
		 Let $X$ be a hyperbolic structure on $S$ and denote the distance between two distinct points $p$ and $q$ on $X$ by $d_X(p,q)$.
        The distance between two subsets $S_1$ and $S_2$ of $X$ is defined by
        $$d_X(S_1,S_2)=\inf_{x_1 \in S_1, x_2 \in S_2} d_X(x_1,x_2).$$

        Let $A$ be an annulus embedded in $S$.
        Denote the two boundaries of $A$ by $\gamma$ and $\gamma'$.
The annulus $A$ is said to be \textit{regular}  if there is a constant $w>0$ such that
         $$d_X(p,\gamma') = d_X(p',\gamma) = w, \forall \ p \in \gamma, p' \in \gamma'.$$

		For a positive number $\delta$ and a simple closed geodesic $\gamma$ on $X$
        (either in the interior of $X$ or be a boundary component of $X$),
        we denote the $\delta$-neighborhood of $\gamma$ by
        $$A_{\delta}(\gamma)=\{x \in X \ | \ d_X(x,\gamma) < \delta\}.$$
By the Collar Lemma (ref. \cite{Buser}), $A_{\delta}(\gamma)$ is a regular annulus
         if $$\delta\leq \sinh^{-1} (\frac{1}{\sinh \frac{\ell_X(\gamma)}{2}}).$$

        Suppose that $A_{\delta}(\gamma)$ is a regular annulus. If $\gamma$ is in the interior of $X$, then the width of $A_{\delta}(\gamma)$ is equal to $2\delta$ and $\gamma$ lies in the middle of
         $A_{\delta}(\gamma)$.  If $\gamma$ is a boundary component of $S$, then the width of $A_{\delta}(\gamma)$ is equal to $\delta$.
  In both cases, we will say that $A_{\delta}(\gamma)$ is a \emph{regular annulus around $\gamma$}.

  We define the auxiliary function  $\eta(x)$ by
        \begin{equation*}
            \eta(x) := \sinh^{-1} (\frac{1}{\sinh \frac{x}{2}})=\cfrac{1}{2} \ln \cfrac{\cosh(x/2)+1}{\cosh(x/2)-1} \ .
        \end{equation*}

        By the Collar Lemma again,
        for any two distinct simple closed geodesics $\gamma_1$ and $\gamma_2$ on $X$,
        the regular annuli $A_{\eta(\ell_{X}(\gamma_1))}(\gamma_1)$  and $A_{\eta(\ell_{X}(\gamma_2))}(\gamma_2)$ are disjoint.

 Throughout this paper, we only consider  regular annuli as collar neighborhoods of boundary components of $X$.
		In this case, as we show in Figure 3, the geodesic $\gamma$ is a  boundary component of $X$.
The regular annulus contains $\gamma$ and $\gamma'$ as boundary components. We call $\gamma'$ the \emph{inner boundary} of
$A_{\delta}(\gamma)$
and denote the length of $\gamma'$ on $X$ by $\ell_X(\gamma')$ (even through $\gamma'$ is not a geodesic, now and later, we will
use the notation $\ell$ to denote the length of an inner boundary when there is no cause of confusion).
The relation between $\ell_X(\gamma)$ and $\ell_X(\gamma')$ is given by (see \cite{Rafi, Marden})
		\begin{equation}\label{eq:RegularAnnulis}
			\ell_X(\gamma')=\ell_X(\gamma) \cdot \cosh d_X(\gamma, \gamma').
		\end{equation}

		\begin{figure}[ht]\label{fig:RegularAnnular}
			\begin{tikzpicture}[scale=0.4]
				\draw		(-1.5,0)	node	{$\gamma$};
				\draw		(23,0)	node	{$\gamma'$};
				\draw		(12,1)	node	{$d_X(\gamma, \gamma') = \delta$};
				\draw		(1,0)	arc	(0:360:	1	and	2);
				\draw		(20,4)	arc	(90:-90:	2	and	4);
				\draw[dashed]	(20,4)	arc	(90:270:	2	and	4);
				\draw		(0,2)	..controls	(5,2)	and	(18,3)..	(19.8,4);
				\draw		(0,-2)	..controls	(5,-2)	and	(18,-3)..	(19.8,-4);
				\draw		(1,0)	..controls	(5,0)	and	(18,0)..	(22,0);
			\end{tikzpicture}
			\caption{\small{An example of regular annulus around $\gamma$ on $X$, where $\gamma$ is
a boundary component of $X$.}}
		\end{figure}
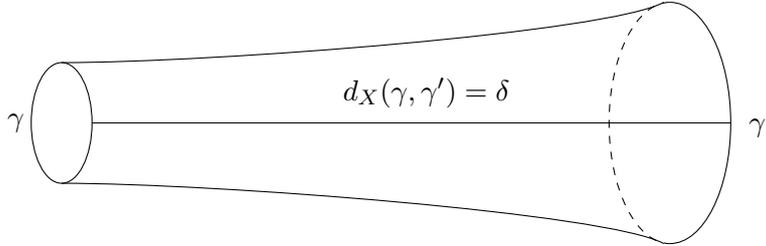

	\section{Proof of Theorem \ref{lemma:key}: the case where $\epsilon_0$ is sufficiently small}\label{ProofTheorem1}
	
		The proof of Theorem \ref{lemma:key} is separated  into two steps.
		Recall that $\epsilon_0$ is an upper bound for the lengths of all boundary components of $S$.
		In this section we will prove  Theorem \ref{lemma:key} in the case where $\epsilon_0$
is sufficiently small.
		We will consider the general case in next section.
	
		We assume that $\epsilon_0< e^{-1} \ln(1+\sqrt{2})$.
Here the constant $e^{-1} \ln(1+\sqrt{2})$ is chosen such that the width of some regular annulus neighborhood
around a boundary component of $S$ has an explicit lower bound.
Let $\epsilon_0' = \ln (1+\sqrt{2})$. Note that
$\epsilon_0< \epsilon_0' <2$.
Let $X_1$ and $X_2$ be two hyperbolic metrics in the $\epsilon_0$-relative part of $\mathcal{T}(S)$.
		We fix an essential arc $\beta \in \mathcal{B}(S)$. Denote the boundary curves where the two endpoints of $\beta$ lie by $\gamma$ and $\gamma'$.

  \subsection{The case where $\gamma \neq \gamma'$.}$\\$

		In this subsection, we consider the case where $\gamma$ and $\gamma'$ are not the same boundary component of $S$.

There is a unique (isotopy class of) simple closed curve $\alpha$ which is homotopic to
the boundary of a regular neighborhood of $\beta \cup \gamma \cup \gamma'$.
		To simplify notation, for a given hyperbolic metric on $S$,
we will denote by $\alpha$, $\beta$, $\gamma$ and $\gamma'$ the geodesic representations of
the isotopy classes of $\alpha$, $\beta$, $\gamma$ and $\gamma'$, if  no confusion arises.

\begin{lemma}\label{lemma:separated}
        For $i=1,2$, we can take a regular annulus $A_i$ around $\gamma$ and a regular annulus $A_i'$ around $\gamma'$ on $X_i$ satisfying
         the following conditions:
        \begin{enumerate}
        \item  Denote the inner boundary of  $A_i$ by $C_i$ and the inner boundary of $A_i'$ by $C_i'$, then
         $\ell_{X_i}(C_i) =\ell_{X_i}(C_i')=\epsilon_0'$.
         \item The inner boundaries $C_i$ and $C_i'$ are disjoint.
            \end{enumerate}
\end{lemma}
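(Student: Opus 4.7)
The plan is to choose the widths of the two regular annuli explicitly by inverting the length-vs.-distance formula (2.5), and then to verify both the regularity condition (Collar Lemma bound) and the disjointness from the Collar Lemma. Concretely, for $i=1,2$, since $\ell_{X_i}(\gamma)\leq \epsilon_0 < \epsilon_0'$, the quantity
\[
\delta_i := \cosh^{-1}\!\Bigl(\tfrac{\epsilon_0'}{\ell_{X_i}(\gamma)}\Bigr)
\]
is a well-defined positive number, and I would define $\delta_i'$ analogously for $\gamma'$. Then formula \eqref{eq:RegularAnnulis} forces the inner boundary $C_i$ of the would-be collar $A_i := A_{\delta_i}(\gamma)$ to satisfy $\ell_{X_i}(C_i)=\epsilon_0'$, and likewise $\ell_{X_i}(C_i')=\epsilon_0'$, so condition (1) holds by construction.

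The real work is the regularity check. Using the Collar Lemma, $A_{\delta_i}(\gamma)$ is regular iff $\delta_i \leq \eta(\ell_{X_i}(\gamma))$. A short computation shows $\cosh\eta(\ell)=\coth(\ell/2)$, so (with $\ell:=\ell_{X_i}(\gamma)$) the requirement becomes $\epsilon_0' \leq \ell\coth(\ell/2)$. I would then observe that the elementary function $\ell\mapsto \ell\coth(\ell/2)$ tends to $2$ as $\ell\to 0^+$ and is increasing, hence $\ell\coth(\ell/2)>2$ for every $\ell>0$; since $\epsilon_0'=\ln(1+\sqrt{2})<2$, the bound is automatic. (The specific choice $\epsilon_0 < e^{-1}\epsilon_0'$ also produces an explicit lower bound $\delta_i \geq \cosh^{-1}(e)$ on the width, which is what the author's phrase ``an explicit lower bound'' is pointing at, and will be useful in the subsequent arguments but is not strictly needed for this lemma.) The same computation works verbatim for $\gamma'$.

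For condition (2), I would invoke the Collar Lemma a second time: the maximal regular collars $A_{\eta(\ell_{X_i}(\gamma))}(\gamma)$ and $A_{\eta(\ell_{X_i}(\gamma'))}(\gamma')$ are disjoint because $\gamma\neq\gamma'$. The annuli $A_i$ and $A_i'$ I just constructed are contained in these maximal collars by the width check of the previous paragraph, and therefore they are themselves disjoint; in particular their inner boundaries $C_i$ and $C_i'$ are disjoint.

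The main obstacle, such as it is, is only the regularity inequality $\epsilon_0'\le\ell\coth(\ell/2)$: it is purely computational and follows from the monotonicity of $\ell\coth(\ell/2)$ together with its limit $2$ at $0$. No deeper ingredients seem to be needed beyond formula \eqref{eq:RegularAnnulis} and the Collar Lemma that have already been recalled in Section \ref{Preliminaries}.
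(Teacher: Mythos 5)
Your proposal is correct and follows essentially the same route as the paper: both arguments reduce to showing that the inner boundary of the maximal collar $A_{\eta(\ell)}(\gamma)$ has length $\ell\cosh\eta(\ell)=\ell\coth(\ell/2)>2>\epsilon_0'$ (the paper's function $f_1(x)=x(e^x+1)/(e^x-1)$ is exactly $x\coth(x/2)$), so that a sub-collar with inner boundary of length $\epsilon_0'$ exists, and both get disjointness from the Collar Lemma applied to the two maximal collars. Your explicit inversion $\delta_i=\cosh^{-1}(\epsilon_0'/\ell_{X_i}(\gamma))$ is just a restatement of the paper's "choose a regular annulus contained in $A_{\eta(\ell)}(\gamma)$ with inner boundary of length $\epsilon_0'$."
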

\begin{proof}		
Denote by $X=X_1$. As we showed in Section 2, by the Collar Lemma,
there exist two disjoint regular annuli $A_{\eta(\ell_X(\gamma))}(\gamma)$ and $A_{\eta(\ell_X(\gamma'))}(\gamma')$.
        Let $\Delta_1$ and $\Delta_1'$ be the inner boundaries of  $A_{\eta(\ell_X(\gamma))}(\gamma)$ and $A_{\eta(\ell_X(\gamma'))}(\gamma')$, respectively.
By \eqref{eq:RegularAnnulis}, the length of $\Delta_1$ satisfies
        \begin{equation*}
            \begin{split}
                \ell_X(\Delta_1)
                    &= \ell_X(\gamma) \cosh(\eta(\ell_X(\gamma))) \\
                    &= \ell_X(\gamma) \cfrac{e^{\ell_X(\gamma)}+1}{e^{\ell_X(\gamma)}-1}.
            \end{split}
        \end{equation*}
   For $x>0$, we consider the function $$f_1(x) = x(e^x+1)/(e^x-1).$$
       It's easy to see that $f_1'(x)>0$ for all $x>0$ and $\lim_{x \to 0} f_1(x) = 2$.
        It follows that $f_1(x)>2$ for all $x>0$. In particular, we have $$\ell_X(\Delta_1) > 2 > \epsilon_0'.$$
        As a result, we can choose a regular annulus $A_1\subset A_{\eta(\ell_X(\gamma))}(\gamma)$ around $\gamma$ with inner boundary
        $C_1$ such that $\ell_X(C_1)=\epsilon_0'$.
       By the same argument, we can choose $C_1'$ to be the inner boundary of a regular annulus that is contained in $A_{\eta(\ell_X(\gamma'))}(\gamma')$.
      Since $A_{\eta(\ell_X(\gamma))}(\gamma)$ and $A_{\eta(\ell_X(\gamma'))}(\gamma')$ are disjoint, $C_1$ and $C_1'$ are disjoint.

By the same argument, we can choose
 $C_2$ and $C_2'$ on $X_2$ that are contained in disjoint regular annuli.

\end{proof}

        It follows from Lemma \ref{lemma:separated} that $C_1$ and $C_1'$ separate $\beta$ into three parts.
		Let ${\beta}_{1}^{A} = \beta \bigcap {A_1}$ and ${\beta'}_{1}^{A} = \beta \bigcap {A_1'}$ be the two terminal parts of $\beta$ and $\beta^{Q}_{1} = \beta \setminus \{ \beta_{1} ^{A}\bigcup {\beta'}_{1}^{A} \} $ be the middle part of $\beta$.
        We use similar notations $C_2$, $C_2'$, ${\beta}_{2}^{A}$, ${\beta'}_{2}^{A}$ and $\beta^{Q}_{2}$ for the hyperbolic structure $X_2$.
        Figure 4   shows the above notations.

		\begin{figure}[ht]\label{fig}
				\begin{tikzpicture}[scale=0.6]
					\draw		(-9.6,0)	node	{$\gamma$};
					\draw		(-9,0.5)	arc	(90:-90:	0.25	and 0.5);
					\draw		(-9,0.5)	arc	(90:270:	0.25	and 0.5);
					\draw		(-4.5,2)	node	{$C_i$};
					\draw		(-4.5,1.5)	arc	(90:-90:	0.5	and 1);
					\draw[dashed]	(-4.5,1.5)	arc	(90:270:	0.5	and 1);
					\draw		(-9,0.5)	..controls	(-8,0.5)	and	(-6,1)..	(-4.5,1.5);
	
					\draw		(9.6,0)	node	{$\gamma'$};
					\draw		(9,0.5)	arc	(90:-90:	0.25	and 0.5);
					\draw	[dashed](9,0.5)	arc	(90:270:	0.25	and 0.5);
					\draw		(4.5,2)	node	{$C_i'$};
					\draw		(4.5,1.5)	arc	(90:-90:	0.5	and 1);
					\draw[dashed]	(4.5,1.5)	arc	(90:270:	0.5	and 1);
					\draw		(9,0.5)	..controls	(8,0.5)	and	(6,1)..	(4.5,1.5);
					
					\draw		(-9,-0.5)	..controls	(-4.5,-0.5)	and	(4.5,-0.5)..	(9,-0.5);
	
					\draw		(0,4)	node	{$\alpha$};
					\draw		(-3,4)	arc	(180:360:	3	and	1);
					\draw[dashed](-3,4)	arc	(180:0:	3	and	1);
					
					\draw		(-4.5,1.5)	..controls	(-4,1.8)	and	(-3,2)..	(-3,4);
					\draw		(-3.5,5)	..controls	(-3.5,5)	and	(-3,5)..	(-3,4);
					\draw		(4.5,1.5)	..controls	(4,1.8)		and	(3,2)..	(3,4);
					\draw		(3.5,5)	..controls	(3.5,5)	and	(3,5)..	(3,4);
					
					\draw		(0,-1.2)	node	{$\beta$};
					\draw		(0,0.7)	node	{$\beta^{Q}_{i}$};
					\draw		(-6,0.6)	node	{$\beta^{A}_{i}$};
					\draw		(6,0.6)	node	{${\beta'}^{A}_{i}$};
					\draw		(-8.75,0)..controls	(-4.5,0.4)	and	(4.5,0.4)..	(9.25,0);
				\end{tikzpicture}
			\caption{\small{An illustration of a pair of pants on $X_i$ where $\gamma \neq \gamma'$ and $\epsilon_0 < e^{-1} \ln(1+\sqrt{2})$, for $i =1, 2$.  }}
		\end{figure}
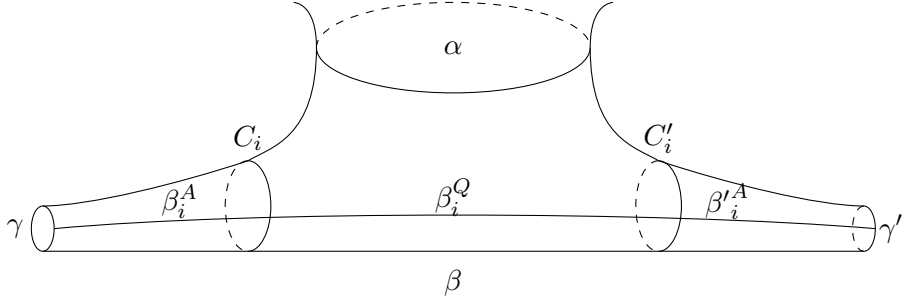

		The key point of our argument is to prove  that there exists a positive constant $K_1$ depending on $\epsilon_0$ such that
		\begin{equation}\label{eq:Diff_Boundary}
			\begin{split}
				\cfrac{\ell_{X_1}(\beta)}{\ell_{X_2}(\beta)}
				&\leq K_1 \cdot \max \{ 1, \
				\cfrac{\ell_{X_1}(\alpha)}{\ell_{X_2}(\alpha)}, \
				\cfrac{\ell_{X_2}(\gamma)}{\ell_{X_1}(\gamma)}, \
				\cfrac{\ell_{X_2}(\gamma')}{\ell_{X_1}(\gamma')}\}.
			\end{split}
		\end{equation}

\begin{figure}[ht]\label{fig:Case_1_Hexagon}
			\begin{tikzpicture}[scale=0.6]
				\draw		(-9.6,0)	node	{$c_i$};
				\draw		(-9.3,0.5)	..controls	(-9.2,0.3)	and	(-9.1,-0.1)..	(-9,-0.5);
				\draw		(-4.2,1)	node	{$c_0'$};
				\draw		(-5.2,2)	..controls	(-4.6,1.3)	and	(-4.5,0)..	(-4.5,-0.1);
	
				\draw		(9.6,0)	node	{$c_i'$};
				\draw		(9.3,0.5)	..controls	(9.2,0.3)	and	(9.1,-0.1)..	(9,-0.5);
				\draw		(4.2,1)	node	{$c_0'$};
				\draw		(5.2,2)	..controls	(4.6,1.3)	and	(4.5,0)..	(4.5,-0.1);
				
				\draw		(-7,-0.7)	node	{$d_i$};
				\draw		(7,-0.7)	node	{$d_i'$};
				\draw		(0,-0.5)	node	{$b_i'$};
				\draw		(-9,-0.5)	..controls	(-4.5,0.2)	and	(4.5,0.2)..	(9,-0.5);
	
				\draw		(0,5.3)	node	{$a_i$};
				\draw		(-3,5)	..controls	(-2,4.5)	and	(2,4.5)..	(3,5);
				
				\draw		(-9.3,0.5)	..controls	(-5,1.5)	and	(-3,3)..	(-3,5);
				\draw		(9.3,0.5)	..controls	(5,1.5)	and	(3,3)..	(3,5);
			\end{tikzpicture}
			\caption{\small{An example of the hexagon on $X_i$ when $\gamma \neq \gamma'$ and $\epsilon_0 < e^{-1} \ln(1+\sqrt{2})$, for $i = 1, 2$.  }}
		\end{figure}
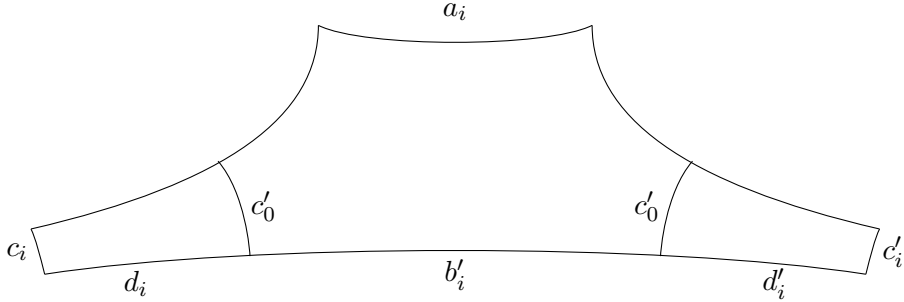

       Let us explain more explicitly.
        As it is shown in Figure 5, by cutting the pair of pants along three geodesic arcs, each of
         which is perpendicular to a pair of boundary components, we have two right-angled hexagons.
        By symmetry, we only need to consider one of them.
		
For  the sake of simplicity, we denote $\ell_{X_i}$ by $\ell_i$, for $i = 1,2$.
		Let $a_i = \ell_i(\alpha)/2$, $b_i = \ell_i(\beta)$, $c_i = \ell_i(\gamma)/2$, $c'_i = \ell_i(\gamma')/2$, $d_i = \ell_i(\beta^{A}_{i})$, $d'_i = \ell_i({\beta'}^{A}_{i})$, $b'_i = \ell_i(\beta^{Q}_{i})$, for $i = 1, 2$.
        And let $c_0' = \epsilon_0'/2$.
		Then $b_i = d_i + b'_i + d'_i$, for $i = 1, 2$.

        With the above notations, we have
		\begin{equation*}\label{eq:C1_main}
			\begin{split}
				\cfrac{\ell_1(\beta)}{\ell_2(\beta)} = \cfrac{b_1}{b_2}
				= \cfrac{b_1'+d_1+d_1'}{b_2'+d_2+d_2'}
				\leq \max\{\cfrac{b_1'}{b_2'},\ \cfrac{d_1}{d_2},\ \cfrac{d_1'}{d_2'} \}.
			\end{split}
		\end{equation*}

		To prove \eqref{eq:Diff_Boundary}, it suffices to control $b_1'/b_2'$, $d_1/d_2$ and $d_1'/d_2'$ by the ratios of the lengths of
$\alpha, \gamma$ and $\gamma'$.
		This will be done in Lemma \ref{lemma:C1_Pb_Final} and Lemma \ref{lemma:C1_Pd_Final} below.
As soon as Lemma \ref{lemma:C1_Pb_Final} and Lemma \ref{lemma:C1_Pd_Final} are proved,
\eqref{eq:Diff_Boundary} is a direct corollary, see Proposition \ref{lemma:Diff_Boundary}.
		
\begin{example}[An exceptional case]\label{example1} If  $S=S_{0,1,2}$, that is,
the surface is homeomorphic to a pair of pants with one puncture and two boundary components,
then $a_i = \ell_i(\alpha)/2=0$. In this case, the ratio $\frac{a_1}{a_2}$ in the following discussions would not make sense.

To avoid this difficulty, we can take two sequences of pairs of pants $\left( X_{1,n}\right)_{n=1}^\infty, \left( X_{2,n}\right)_{n=1}^\infty$ (we denote their boundary components by $\alpha, \gamma, \gamma'$ as above) such that
$$ \ell_{X_{i,n}}(\gamma)=\ell_{X_i}(\gamma), \ell_{X_{i,n}}(\gamma')=\ell_{X_i}(\gamma'), \ell_{X_{i,n}}(\alpha)=\frac{1}{n}, \ i=1,2.$$
Since the constants in the following lemmas are independent of $n$, by taking a limit as $n$ goes to infinity,
we will get the same results (all the following lemmas are true in such a special case if we set $\frac{0}{0}=1$).

Note that the same argument applies to $S=S_{0,2,1}$, that is,
the surface is homeomorphic to a pair of pants with two punctures and one boundary component,
which we will consider in Section \ref{3.3}.
\end{example}

By Example \ref{example1}, we can assume that $a_i>0, \ i=1,2$. We first consider the ratio $b_1'/b_2'$.
		\begin{lemma}\label{lemma:C1_Pb_Final}
			There exists a positive constant $K_1'$ depending on $\epsilon_0$ such that
			\begin{equation}\label{eq:C1_Pb_Final}
					\cfrac{b_1'}{b_2'}
					\leq K_1' \cdot \max \{ 1,\ \cfrac{a_1}{a_2} \}.
			\end{equation}
		\end{lemma}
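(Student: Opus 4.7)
The plan is to use the right-angled hexagon trigonometric identity to express $b_i'$ as a function of $a_i$ alone (up to multiplicative constants depending only on $\epsilon_0$), and then deduce the claimed ratio bound by a short case analysis on the sizes of $a_1,a_2$. Applying the hexagon formula \eqref{eq:RightAngledHexagon} to the hexagon of Figure~$5$, whose consecutive sides are $c_i, b_i, c_i'$ together with the three perpendicular seams and with $a_i$ opposite to $b_i$, and taking $b_i$ in the role of the side whose $\cosh$ is to be solved for, gives
\begin{equation*}
\cosh b_i \;=\; \frac{\cosh a_i + \cosh c_i \cosh c_i'}{\sinh c_i \sinh c_i'}.
\end{equation*}
Since $\beta$ is the common perpendicular of $\gamma$ and $\gamma'$, it crosses each equidistant curve $C_i, C_i'$ orthogonally, so $d_i$ and $d_i'$ coincide with the corresponding collar widths. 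Combining $\ell_i(C_i)=\ell_i(C_i')=\epsilon_0'$ with \eqref{eq:RegularAnnulis} then yields $\cosh d_i = c_0'/c_i$ and $\cosh d_i' = c_0'/c_i'$; by Example~\ref{example1} we may assume $a_i > 0$.

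Next I exploit the smallness of $c_i, c_i'$ relative to the fixed constant $c_0'$ (indeed $c_i, c_i'\le \epsilon_0/2$ while $c_0' = \epsilon_0'/2$ is independent of the point in $\mathcal T(S)$). Writing $b_i, d_i, d_i'$ in closed form via \eqref{eq:Arcosh} and using $\sinh x = x(1+O(x^2))$ and $\cosh x = 1+O(x^2)$ uniformly on $[0,\epsilon_0/2]$, the terms $\log(1/c_i)$ and $\log(1/c_i')$ cancel when forming $b_i' = b_i - d_i - d_i'$, leaving
\begin{equation*}
b_i' \;=\; \log\bigl(\cosh a_i + 1\bigr) \;-\; \log\bigl(2(c_0')^2\bigr) \;+\; R_i,
\end{equation*}
with $|R_i|$ bounded by a constant depending only on $\epsilon_0$. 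Since $c_0'$ depends only on $\epsilon_0'$ and $\log(\cosh a+1) \asymp \max(1,a)$ with absolute constants, one concludes $b_i' \asymp \max(1,a_i)$, where the $\asymp$-constants depend only on $\epsilon_0$.

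The lemma then drops out from an elementary case analysis: comparing the two cases $a_1\le a_2$ and $a_1>a_2$ (in the latter splitting further by whether $a_2<1$ or $a_2\ge1$), one checks that $\max(1,a_1)/\max(1,a_2) \le \max(1, a_1/a_2)$, and therefore
\begin{equation*}
\frac{b_1'}{b_2'} \;\le\; K_1'\cdot\max\!\left(1,\;\frac{a_1}{a_2}\right).
\end{equation*}
The main technical obstacle is establishing the uniform $O(1)$ control of the remainder $R_i$ across the full parameter range $(0,\epsilon_0/2]^2 \times (0,\infty)$ for $(c_i,c_i',a_i)$; this reduces to elementary but somewhat fiddly estimates on $\cosh^{-1}$ composed with rational functions, and is best handled by splitting into the subregimes $a_i\lesssim 1$ and $a_i\gg 1$ so that either the $\log(\cosh a_i+1)$ term is bounded or it dominates, respectively.
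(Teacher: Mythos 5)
Your strategy is essentially the one the paper follows: use the right-angled hexagon relation together with the collar formula $c_i\cosh d_i=c_0'$ to show that $b_i'$ and $a_i$ differ by an additive amount depending only on $\epsilon_0$ (this is the paper's Lemma \ref{lemma:difference}, inequality \eqref{eq:C1_Pb_5nd}), and then finish with an elementary case analysis. Your packaging via $b_i'\asymp\max\{1,a_i\}$ together with the inequality $\max\{1,a_1\}/\max\{1,a_2\}\le\max\{1,a_1/a_2\}$ is cleaner than the paper's seven-case discussion and is a correct way to conclude \emph{once the two-sided comparison is in hand}.

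There is, however, one genuine gap. From $b_i'=\log(\cosh a_i+1)+O(1)$ alone you cannot conclude $b_i'\asymp\max\{1,a_i\}$: when $a_i$ is bounded, the unquantified additive $O(1)$ remainder could a priori drive $b_i'$ arbitrarily close to $0$, and the lower half of the $\asymp$, namely $b_i'\gtrsim 1$, is exactly what keeps $b_1'/b_2'$ under control when $a_1/a_2$ is small or moderate. A uniform positive lower bound on $b_i'$ must therefore be supplied separately; the paper does this in Lemma \ref{lemma:bi} (inequality \eqref{eq:C1_Pb_1st}) by noting that the collar about $\gamma$ extends past $C_i$ to an equidistant curve of length $2$, that the enlarged collars about $\gamma$ and $\gamma'$ remain disjoint, and hence that $\beta_i^Q$ must cross both annular shells. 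Alternatively, the bound can be recovered inside your own computation by tracking constants rather than writing $O(1)$: from $e^{b_i}\ge\cosh b_i\ge 2/(\sinh c_i\sinh c_i')$ and $e^{d_i}\le 2\cosh d_i=2c_0'/c_i$ (similarly for $d_i'$) one gets
\begin{equation*}
e^{b_i'}\;\ge\;\frac{c_i\,c_i'}{2(c_0')^2\,\sinh c_i\,\sinh c_i'}\;\ge\;\frac{1}{2k_1^2\,(c_0')^2}\;>\;1,
\end{equation*}
using $\sinh x\le k_1x$ on $[0,\epsilon_0/2]$ with $k_1$ close to $1$ and $c_0'=\ln(1+\sqrt{2})/2<1/2$. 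Either way, this step has to be made explicit; as written, the asserted two-sided comparison, and hence the lemma, does not yet follow.
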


		\begin{proof}

           The proof of this lemma will use Lemma \ref{lemma:bi} and Lemma \ref{lemma:difference} below.

           \begin{lemma}\label{lemma:bi}
         There is a uniform positive lower bound for $b_i', \ i=1,2$.
           \end{lemma}
\begin{proof}
            Recall that the regular annulus $A_{\eta(\ell_1(\gamma))}(\gamma)$ contains $C_1$ and the length of the inner boundary of $A_{\eta(\ell_1(\gamma))}(\gamma)$  is greater than $2$.
            We can take another regular annulus around $\gamma$ which is isometrically embedded in
             $A_{\eta(\ell_1(\gamma))}(\gamma)$ and which has a inner boundary, denoted by $\widetilde{C}_1$, with length equal to $2$.
            Denote by $e_1$ the distance between $\gamma$ and $\widetilde{C}_1$.
            It can be seen from Figure 6   that $b_1' \geq (e_1 - d_1) + (e_1' - d_1')$.

			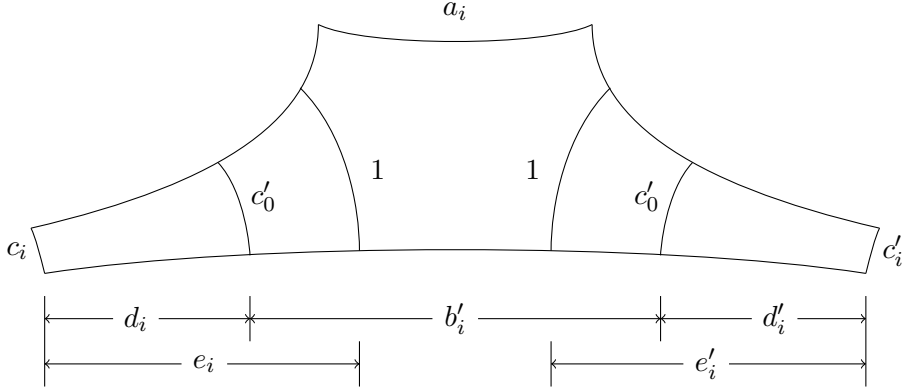
\begin{figure}[ht]\label{fig:C1_Pb_1st}
				\begin{tikzpicture}[scale=0.6]
					\draw		(-9.6,0)	node	{$c_i$};
					\draw		(-9.3,0.5)	..controls	(-9.2,0.3)	and	(-9.1,-0.1)..	(-9,-0.5);
					\draw		(-4.2,1.2)	node	{$c_0'$};
					\draw		(-5.2,1.95)	..controls	(-4.6,1.3)	and	(-4.5,0)..	(-4.5,-0.1);
					\draw		(-1.7,1.8)	node	{$1$};
					\draw		(-3.4,3.6)	..controls	(-2.4,2.6)	and	(-2.1,1)..	(-2.1,0);
	
					\draw		(9.6,0)	node	{$c_i'$};
					\draw		(9.3,0.5)	..controls	(9.2,0.3)	and	(9.1,-0.1)..	(9,-0.5);
					\draw		(4.2,1.2)	node	{$c_0'$};
					\draw		(5.2,1.95)	..controls	(4.6,1.3)	and	(4.5,0)..	(4.5,-0.1);
					\draw		(1.7,1.8)	node	{$1$};
					\draw		(3.4,3.6)	..controls	(2.4,2.6)	and	(2.1,1)..	(2.1,0);

					\draw		(-9,-1) 	-- 	(-9,-3);
					\draw		(-4.5,-1)	-- 	(-4.5,-2);
					\draw		(-2.1,-2) 	-- 	(-2.1,-3);
					\draw		(9,-1) 	-- 	(9,-3);
					\draw		(4.5,-1)	-- 	(4.5,-2);
					\draw		(2.1,-2)	-- 	(2.1,-3);
											
					\draw	[<-]	(-9,-1.5)	-- 	(-7.5,-1.5);
					\draw	[->]	(-6.5,-1.5)	-- 	(-4.5,-1.5);
					\draw		(-7,-1.5)	node	{$d_i$};
					\draw	[<-]	(9,-1.5)	-- 	(7.5,-1.5);
					\draw	[->]	(6.5,-1.5)	-- 	(4.5,-1.5);
					\draw		(7,-1.5)	node	{$d_i'$};
					\draw	[<-]	(-4.5,-1.5)	-- 	(-0.5,-1.5);
					\draw	[->]	(0.5,-1.5)	-- 	(4.5,-1.5);
					\draw		(0,-1.5)	node	{$b_i'$};
					\draw	[<-]	(-9,-2.5)	-- 	(-6,-2.5);
					\draw	[->]	(-5,-2.5)	-- 	(-2.1,-2.5);
					\draw		(-5.5,-2.5)	node	{$e_i$};
					\draw	[<-]	(9,-2.5)	-- 	(6,-2.5);
					\draw	[->]	(5,-2.5)	-- 	(2.1,-2.5);
					\draw		(5.5,-2.5)	node	{$e_i'$};

					\draw		(-9,-0.5)	..controls	(-4.5,0.2)	and	(4.5,0.2)..	(9,-0.5);
	
					\draw		(0,5.3)	node	{$a_i$};
					\draw		(-3,5)	..controls	(-2,4.5)	and	(2,4.5)..	(3,5);
					
					\draw		(-9.3,0.5)	..controls	(-5,1.5)	and	(-3,3)..	(-3,5);
					\draw		(9.3,0.5)	..controls	(5,1.5)	and	(3,3)..	(3,5);
				\end{tikzpicture}
				\caption{\small{An example for $e_i$ and $e_i'$, for $i = 1, 2$. }}
			\end{figure}
			
            It suffices to give a lower bound for $e_1-d_1$. By \eqref{eq:RegularAnnulis} and \eqref{eq:Arcosh}, we have \begin{equation*}
                e_1 - d_1 = \ln \cfrac{1/c_1 + \sqrt{(1/c_1)^2 -1}}{c_0'/c_1 + \sqrt{(c_0'/c_1)^2 -1}},
            \end{equation*}
            where  $c_1< \frac{\ln ( 1 + \sqrt{2})} {2e}$.  Consider the function
            \begin{equation*}
                f_2(y) = ( y + \sqrt{y^2-1} ) / ( c_0' y + \sqrt{{c_0'}^2 y^2 -1} ), \  y >2 e / \ln ( 1 + \sqrt{2}).
            \end{equation*}
            By the fact  $f_2'(y) <0$ and $y > 2 e / \ln ( 1 + \sqrt{2})$, we have
            $$e_1 - d_1 = f_2(1/c_1) \geq f_2(2 e / \ln(1+\sqrt{2})) = 4 / \ln(1+\sqrt{2}) >0.$$
            By the same argument we have $e_1' - d_1' \geq 4 / \ln(1+\sqrt{2})$.
            Let $M_0= 8 / \ln(1+\sqrt{2})$, then  we have (the same estimation for $b_2'$)
			\begin{equation}\label{eq:C1_Pb_1st}
				b_i' \geq M_0, \text{ for } i=1,2.
			\end{equation}		
		\end{proof}

           Next we will give a upper bound for the difference between $a_i$ and $b_i'$, $i=1,2$.

           \begin{lemma} \label{lemma:difference}There is a constant $D_1$ depending on $\epsilon_0$ such that
           \begin{equation}\label{eq:C1_Pb_5nd}
           |a_i-b_i'|\leq D_1, \ i=1,2.
           \end{equation}
           \end{lemma}
			\begin{proof} The method used here is similar to that of \cite{Rafi}.

			Since $c_1 \leq \epsilon_0/2$ and $c_1' \leq \epsilon_0/2$,
there exists a constant $k_1$ depending on $\epsilon_0$ such that
$$c_1 < \sinh c_1 < k_1 c_1 \  \mathrm{and}  \  c_1' < \sinh c_1' < k_1 c_1'.$$
			By \eqref{eq:RegularAnnulis}, we have
$$
c_1 \cosh d_1 = c_1' \cosh d_1' = c_0' = \ln(1+\sqrt{2})/2.
$$
			Then we have
			\begin{equation*}
				\begin{split}
					&\sinh c_1 \cdot \sinh c_1' \cdot \cosh (b_1'+d_1+d_1') \\
					&> c_1 \cdot c_1' \cdot \cfrac{e^{b_1'+d_1+d_1'}}{2} \
					= \cfrac{e^{b_1'}}{2} \cdot c_1 e^{d_1} \cdot c_1' e^{d_1'} \\
					&> \cfrac{e^{b_1'}}{2} \cdot c_1 \ \cosh{d_1} \cdot c_1' \ \cosh{d_1'} \
					= \cfrac{1}{2} \ c_0'^2 \ e^{b_1'},
				\end{split}
			\end{equation*}
			and
			\begin{equation*}
				\begin{split}
					&\sinh c_1 \cdot \sinh c_1' \cdot \cosh (b_1'+d_1+d_1') \\
					&< k_1 \ c_1 \cdot k_1 \ c_1' \cdot e^{b_1'+d_1+d_1'} \
					= 4 \ k_1^2 \ e^{b_1'} \cdot c_1 \ \cfrac{e^{d_1}}{2} \cdot c_1' \  \cfrac{e^{d_1'}}{2} \\
					&< 4 \ k_1^2 \ e^{b_1'} \cdot c_1 \ \cosh{d_1} \cdot c_1' \ \cosh{d_1'} \
					= 4 {k_1}^2 {c_0'}^2 \ e^{b_1'}.
				\end{split}
			\end{equation*}

			Let $M_1 = \max \{  2/{c_0'}^2, 4 k_1^2 {c_0'}^2\}$.
          It follows that
            $$
            M_1^{-1} e^{b_1'}\leq \sinh c_1 \cdot \sinh c_1' \cdot \cosh (b_1'+d_1+d_1')\leq M_1 e^{b_1'}.$$
	Combining the above inequality with \eqref{eq:RightAngledHexagon}, we have
			\begin{equation*}
				\begin{split}
					e^{a_1}
					&\leq 2 \cosh{a_1} \
					< 2 ( \cosh{a_1} + \cosh{c_1} \cdot \cosh{c_1'} ) \\
					&= 2 \cdot \sinh c_1 \cdot \sinh c_1' \cdot \cosh ( b_1' + d_1 + d_1' ) \\
					&\leq 2 M_1 \cdot e^{b_1'}.
				\end{split}
			\end{equation*}
			On the other hand, we have
			\begin{equation*}
				\begin{split}
					\cosh c_1 \cdot \cosh c_1'
					&< \cosh c_1 \cosh c_1' + \sinh c_1 \sinh c_1' \\
					&= \cosh (c_1 + c_1') \
					< \cosh (\cfrac{\epsilon_0}{2} + \cfrac{\epsilon_0}{2}) \\
					&< \cosh \epsilon_0 \cdot \cosh a_1.
				\end{split}
			\end{equation*}
Applying \eqref{eq:RightAngledHexagon} again, we have
			\begin{equation*}
				\begin{split}
					e^{a_1}
					&\geq \cosh{a_1} \
					= (1+\cosh\epsilon_0)^{-1} (\cosh{a_1} + \cosh\epsilon_0 \cosh{a_1}) \\
					&> (1+\cosh\epsilon_0)^{-1} (\cosh{a_1} + \cosh c_1 \cdot \cosh c_1')  \\ 
					&= (1+\cosh\epsilon_0)^{-1} \cdot \sinh c_1 \cdot \sinh c_1' \cdot \cosh ( b_1' + d_1 + d_1' ) \\
					&\geq (1+\cosh\epsilon_0)^{-1} M_1^{-1} \cdot e^{b_1'}.
				\end{split}
			\end{equation*}
In conclusion, we have
            $$
            (1+\cosh\epsilon_0)^{-1} M_1^{-1} \cdot e^{b_1'}\leq e^{a_1}\leq 2 M_1 \cdot e^{b_1'}
            $$
or, equivalently,
            $$
            (1+\cosh\epsilon_0)^{-1} M_1^{-1} \leq e^{a_1-b_1'}\leq 2 M_1.
            $$
Setting $D_1= \max \{ |\ln (2 M_1)|,\ |\ln (M_1\cdot (1+\cosh\epsilon_0))|\}$, then we have
            \begin{equation*}\label{eq:C1_Pb_2nd}
				|a_1 - b_1'| \leq D_1.
			\end{equation*}
By the same proof we also have
\begin{equation*}\label{eq:C1_Pb_2nd}
				|a_2 - b_2'| \leq D_1.
			\end{equation*}
\end{proof}

			We continue with our proof of Lemma \ref{lemma:C1_Pb_Final}. Let $M > 2 D_1$ be a sufficiently large positive number. The remaining discussion is separated into several different cases.

\bigskip
	
			\begin{case}{1: $b_i'\geq M, \ i=1,2$.}

\bigskip

In this case, using  \eqref{eq:C1_Pb_5nd}, we have (for $i=1,2$)
				\begin{equation*}
						\cfrac{a_i}{b_i'} \leq \cfrac{b_i' + D_1}{b_i'} < 1 + \cfrac{D_1}{M}< \frac{3}{2}
				\end{equation*}
				and
				\begin{equation*}
						\cfrac{a_i}{b_i'} \geq \cfrac{b_i' - D_1}{b_i'} > 1 - \cfrac{D_1}{M} > \cfrac{1}{2}.
				\end{equation*}
That is
$$
 \frac 1 2 \leq \cfrac{a_i}{b_i'} \leq \frac 3 2.
$$
	It follows that
					\begin{equation*}
						\cfrac{b_1'}{b_2'}
						\leq 3 \cdot \cfrac{a_1}{a_2}.
					\end{equation*}	
\end{case}

\bigskip
	
			\begin{case}{2: $b_i' \leq M$ and $a_i > \epsilon_0, i=1,2$.}

\bigskip

									Combing with \eqref{eq:C1_Pb_1st} and \eqref{eq:C1_Pb_5nd}, we have $M_0 \leq b_i' \leq M$ and $\epsilon_0 \leq a_i \leq b_i' + D_1 \leq M + D_1$.
				It follows that
				\begin{equation*}
						\cfrac{2 M_0}{3 M} < \cfrac{M_0}{M+D_1} \leq \cfrac{b_i'}{a_i} \leq \cfrac{M}{\epsilon_0}.
				\end{equation*}
		In this case
\begin{equation*}
						\cfrac{b_1'}{b_2'}
						\leq \cfrac{M}{\epsilon_0} \cdot \cfrac{3M}{2 M_0} \cdot \cfrac{a_1}{a_2}.
					\end{equation*}	
			
	\end{case}

\bigskip
	
			\begin{case}{3: $b_1'>M$ and $b_2'\leq M, a_2>\epsilon_0$.}

\bigskip	
				
	 It follows from the estimations in {Case 1} and {Case 2}
that
                     \begin{equation*}
						\cfrac{b_1'}{b_2'}
						\leq  \cfrac{3M}{M_0} \cdot \cfrac{a_1}{a_2}.
					\end{equation*}	
\end{case}

\bigskip
	
			\begin{case}{4: $b_2'>M$ and $b_1'\leq M, a_1>\epsilon_0$.}

\bigskip

In this case, we have the same conclusion as in {Case 3}.
					
\end{case}

\bigskip

			\begin{case}{5: $b_1'>M$ and $b_2'\leq M, a_2\leq \epsilon_0$.}

\bigskip
					By \eqref{eq:C1_Pb_1st}, we have
					\begin{equation*}
						\begin{split}
							\cfrac{b_1'}{b_2'}
							&\leq \cfrac{b_1'}{M_0}
							\leq \cfrac{2\cdot a_1}{M_0}
							= \cfrac{2\cdot a_2}{M_0} \cdot \cfrac{a_1}{a_2}
							\leq \cfrac{2\cdot \epsilon_0}{M_0} \cdot \cfrac{a_1}{a_2}.
						\end{split}
					\end{equation*}
		
\end{case}

\bigskip
	
			\begin{case}{6:  $b_1'\leq M, a_1> \epsilon_0$ and $b_2'\leq M, a_2\leq \epsilon_0$.}

\bigskip

					By \eqref{eq:C1_Pb_1st}, we have
					\begin{equation*}
						\begin{split}
							\cfrac{b_1'}{b_2'}
							&\leq \cfrac{b_1'}{M_0}
							\leq \cfrac{M}{\epsilon_0}\cfrac{\cdot a_1}{M_0}
							=\cfrac{M}{\epsilon_0} \cfrac{\cdot a_2}{M_0} \cdot \cfrac{a_1}{a_2}
							\leq \cfrac{M}{\epsilon_0}\cfrac{\cdot \epsilon_0}{M_0} \cdot \cfrac{a_1}{a_2}= \cfrac{a_1}{a_2}.
						\end{split}
					\end{equation*}
	\end{case}

\bigskip
	
			\begin{case}{7:  $b_1'\leq M, a_1\leq \epsilon_0$ and $b_2'\leq M, a_2\leq \epsilon_0$.}

\bigskip	
		
			By \eqref{eq:C1_Pb_1st}, we have
				\begin{equation*}
					\cfrac{b_1'}{b_2'}
					\leq \cfrac{M}{M_0}.
				\end{equation*}
		In this case, it is obvious that
			\begin{equation*}
					\cfrac{b_1'}{b_2'}
					\leq\cfrac{M}{M_0} \cdot \max \{ 1,\cfrac{a_1}{a_2} \}.
			\end{equation*}
\end{case}

\bigskip

The other two remaining cases, that is, $b_2'>M, b_1'\leq M, a_1\leq \epsilon_0$ and $b_2'\leq M, a_2> \epsilon_0, b_1'\leq M, a_1\leq \epsilon_0$, can be reduced to {Case 5} and {Case 6}.
By choosing $K_1'=\max \{3, \frac{3M^2}{2M_0\epsilon_0}, \frac{3M}{M_0}, \frac{2\epsilon_0}{M_0}\}$, we
complete the proof of Lemma \ref{lemma:C1_Pb_Final}.
\end{proof}

		Next we will consider the ratio $d_1/d_2$.

		\begin{lemma}\label{lemma:C1_Pd_Final} The ratio $d_1/d_2$ has an upper bound given by
			\begin{equation}\label{eq:C1_Pd_Final}
				\cfrac{d_1}{d_2}
				\leq 2 \cdot \max \{ 1, \cfrac{c_2}{c_1} \} \ .
			\end{equation}
		\end{lemma}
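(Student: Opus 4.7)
The plan is to exploit that $\beta$ (in its geodesic representative) meets the geodesic boundary $\gamma$ orthogonally, so the intersection $\beta_i^A$ of $\beta$ with the regular annulus $A_i$ is a perpendicular geodesic segment joining $\gamma$ to the equidistant inner boundary $C_i$. Consequently $d_i = \ell_i(\beta_i^A)$ equals the width $d_{X_i}(\gamma, C_i)$ of $A_i$. Applying formula \eqref{eq:RegularAnnulis} then gives
$$\epsilon_0' = \ell_i(C_i) = \ell_i(\gamma)\cosh d_i = 2 c_i \cosh d_i,$$
so $\cosh d_i = c_0'/c_i$, and hence, setting $u_i := c_0'/c_i$,
$$d_i = \cosh^{-1}(u_i) = \ln\!\bigl(u_i + \sqrt{u_i^2-1}\bigr).$$

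The standing hypothesis $\epsilon_0 < e^{-1}\ln(1+\sqrt{2})$ combined with $c_i \leq \epsilon_0/2$ yields
$$u_i = \frac{c_0'}{c_i} > e \qquad (i=1,2),$$
so in particular $\ln u_i > 1$. The elementary bounds $\ln u_i \le \cosh^{-1}(u_i) \le \ln(2u_i)$ will be the main tool.

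Now split into cases. If $c_1 \geq c_2$, then $u_1 \leq u_2$, monotonicity of $\cosh^{-1}$ gives $d_1 \leq d_2$, so $d_1/d_2 \leq 1 \leq 2\max\{1, c_2/c_1\}$ and we are done. In the remaining case $c_1 < c_2$, combine the two logarithmic bounds to write
$$\frac{d_1}{d_2} \leq \frac{\ln(2u_1)}{\ln u_2} = \frac{\ln 2 + \ln u_1}{\ln u_2} < 1 + \frac{\ln u_1}{\ln u_2},$$
the last inequality using $\ln 2 < 1 < \ln u_2$. The decisive observation is that $x \mapsto (\ln x)/x$ is decreasing on $[e,\infty)$; since $u_1 > u_2 \geq e$, this yields
$$\frac{\ln u_1}{\ln u_2} \leq \frac{u_1}{u_2} = \frac{c_2}{c_1},$$
from which $d_1/d_2 < 1 + c_2/c_1 \leq 2\,c_2/c_1$ follows at once.

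The only nontrivial step is the passage from a ratio of logarithms to a ratio of the arguments in the second case; this step relies crucially on the quantitative smallness assumption $\epsilon_0 < e^{-1}\ln(1+\sqrt{2})$, which is exactly what guarantees $u_i > e$ and therefore unlocks the monotonicity of $(\ln x)/x$ on $[e,\infty)$. The same argument applied to $\gamma'$ yields the analogous bound for $d_1'/d_2'$, which will be used in parallel in the remainder of the proof.
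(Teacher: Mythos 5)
Your proof is correct and follows essentially the same route as the paper's: both reduce $d_i$ to $\cosh^{-1}(c_0'/c_i)$ via \eqref{eq:RegularAnnulis}, sandwich it between $\ln(c_0'/c_i)$ and $\ln(2c_0'/c_i)$, and then use the monotonicity of $(\ln x)/x$ on $[e,\infty)$ (guaranteed by $c_0'/c_i>e$) to convert the ratio of logarithms into $c_2/c_1$. The only cosmetic difference is how the factor $2$ is extracted (you split off $\ln 2$ additively, the paper uses $\ln(2x)\le 2\ln x$ for $x>e$); the substance is identical.
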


		\begin{proof}  As $c_0' = (\ln{(1+\sqrt{2})})/2$ and $\epsilon_0 < e^{-1} \ln{(1+\sqrt{2})}$, we have
$$c_0'/c_1 \geq \epsilon_0'/\epsilon_0 > e > 1.$$
 By \eqref{eq:RegularAnnulis}, we have $$c_1 \cosh d_1 =c_0'.$$
           By \eqref{eq:Arcosh}, we have $$d_1 = \text{arcosh} (c_0'/c_1) = \ln( c_0'/c_1 + \sqrt{(c_0'/c_1)^2 -1}).$$

            Note that for any $x > e$, $\ln (2 x) \leq 2 \ln x$.
			Since $\sqrt{(c_0'/c_1)^2 -1} \leq c_0'/c_1$ and $c_0'/c_1 > e$, we have
$$
d_1 = \ln( c_0'/c_1 + \sqrt{(c_0'/c_1)^2 -1}) \leq \ln (2 c_0'/c_1) \leq 2 \cdot \ln (c_0'/c_1).$$
Since $d_1 = \ln( c_0'/c_1 + \sqrt{(c_0'/c_1)^2 -1}) \geq \ln (c_0'/c_1)$, we have
						\begin{equation*}\label{eq:C1_Pd_2nd}
				\ln c_0' - \ln c_1 \leq d_1 \leq 2 \cdot (\ln c_0' - \ln c_1) .
			\end{equation*}

The same discussion implies

\begin{equation*}\label{eq:C1_Pd_2*nd}
				\ln c_0' - \ln c_2 \leq d_2 \leq 2 \cdot (\ln c_0' - \ln c_2) \ .
			\end{equation*}

			As a result, we have
			\begin{equation*}
				\cfrac{d_1}{d_2}
				\leq  2 \cdot \cfrac{ \ln c_0' - \ln c_1 }{\ln c_0' - \ln c_2 }				.
			\end{equation*}

			If $c_2 \leq c_1$,  we have
$$\frac{\ln c_0' - \ln c_1}{\ln c_0' - \ln c_2} \leq 1.$$

Now suppose that $c_2 > c_1$.
				Let $f_3(x) = x^{-1} \ln x$. Then $f_3'(x) = (1 - \ln x)/{x^2}$. We know that $f_3'(x) \leq 0$ as $x \geq e$.
				Since $\frac {c_0'} {c_i} \geq e, i=1,2$, and $\frac {c_0'} {c_1} > \frac {c_0'} {c_2}.$ It follows that $f_3(\frac {c_0'} {c_1}) < f_3(\frac {c_0'} {c_2})$. This implies
				\begin{equation*}
					\cfrac{\ln c_0' - \ln c_1}{\ln c_0' - \ln c_2}
					\leq \cfrac{c_2}{c_1}.
				\end{equation*}
		
	The above discussions lead to the following inequality:
			\begin{equation*}
				\cfrac{d_1}{d_2} \leq 2 \cdot \max \{ 1, \cfrac{c_2}{c_1} \}.
			\end{equation*}
		
		\end{proof}

By the same discussion as above, we have
\begin{lemma}\label{lemma:C1_Pd*_Final} The ratio $d_1'/d_2'$ has an upper bound given by
						\begin{equation}\label{eq:C1_Pd_Final}
				\cfrac{d_1'}{d_2'}
				\leq 2 \cdot \max \{ 1, \cfrac{c_2'}{c_1'} \}.
			\end{equation}
		\end{lemma}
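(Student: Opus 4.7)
The proof will mirror exactly the argument given for Lemma \ref{lemma:C1_Pd_Final}, with the primed quantities $c_i'$, $d_i'$ playing the roles of $c_i$, $d_i$. The plan is to exploit the symmetry of the configuration: the arc $\beta$ enters the regular annulus $A_i'$ around $\gamma'$ perpendicularly and exits through the inner boundary $C_i'$ of length $\epsilon_0'$, so the geometry on the $\gamma'$-side is identical to that on the $\gamma$-side.

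First, I would apply the regular-annulus relation \eqref{eq:RegularAnnulis} to the annulus $A_i'$, which gives $c_i' \cosh d_i' = c_0'$, so by \eqref{eq:Arcosh}
\begin{equation*}
d_i' = \cosh^{-1}(c_0'/c_i') = \ln\!\bigl(c_0'/c_i' + \sqrt{(c_0'/c_i')^2 - 1}\bigr).
\end{equation*}
Since $c_i' \leq \epsilon_0/2$ and $\epsilon_0 < e^{-1}\ln(1+\sqrt 2)$, we still have $c_0'/c_i' > e$, hence the double inequality $\ln(2x) \leq 2 \ln x$ for $x > e$ yields the same bound as before:
\begin{equation*}
\ln c_0' - \ln c_i' \leq d_i' \leq 2(\ln c_0' - \ln c_i'), \qquad i=1,2.
\end{equation*}

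Next, I would divide to get $d_1'/d_2' \leq 2 (\ln c_0' - \ln c_1')/(\ln c_0' - \ln c_2')$ and split into the cases $c_2' \leq c_1'$ (in which the fraction is at most $1$) and $c_2' > c_1'$. In the latter case, the function $f_3(x) = x^{-1}\ln x$ is decreasing on $[e,\infty)$, and since $c_0'/c_i' \geq e$ with $c_0'/c_1' > c_0'/c_2'$, applying $f_3$ yields
\begin{equation*}
\frac{\ln c_0' - \ln c_1'}{\ln c_0' - \ln c_2'} \leq \frac{c_2'}{c_1'}.
\end{equation*}
Combining both cases gives the claimed bound $d_1'/d_2' \leq 2 \cdot \max\{1, c_2'/c_1'\}$.

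There is no real obstacle here, as the entire argument is a verbatim translation of the unprimed case; the only point that needs verification is that the hypothesis $\epsilon_0 < e^{-1}\ln(1+\sqrt 2)$ applies equally to $c_i'$ (which it does, since both $\gamma$ and $\gamma'$ are boundary components of $S$ and the $\epsilon_0$-relative assumption bounds the lengths of \emph{all} boundary components). Once this is noted, the proof can be written as a single sentence invoking Lemma \ref{lemma:C1_Pd_Final} with the obvious substitutions.
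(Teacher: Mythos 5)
Your proposal is correct and follows exactly the paper's intended argument: the paper itself disposes of this lemma with the phrase ``by the same discussion as above,'' deferring to the proof of Lemma \ref{lemma:C1_Pd_Final}, and your verbatim translation to the primed quantities (using $c_i'\cosh d_i'=c_0'$, the two-sided bound $\ln c_0'-\ln c_i'\leq d_i'\leq 2(\ln c_0'-\ln c_i')$, and the monotonicity of $f_3(x)=x^{-1}\ln x$ on $[e,\infty)$) is precisely what is meant. Your remark that the $\epsilon_0$-relative hypothesis bounds $\ell_{X_i}(\gamma')$ as well as $\ell_{X_i}(\gamma)$, so that $c_0'/c_i'>e$ still holds, is the only point requiring verification, and you have verified it correctly.
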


		\begin{proposition}\label{lemma:Diff_Boundary}
			Let $\epsilon_0 < e^{-1} \ln(1+\sqrt{2})$.  For any
essential arc $\beta \in \mathcal{B}(S)$ whose endpoints lie on different boundary components $\gamma$ and $\gamma'$ of $S$, let $\alpha$ be the associated simple closed curve homotopic to the boundary of a regular neighborhood of $\beta \cup \gamma \cup \gamma'$. Then there exists a positive number $K_1$ depending on $\epsilon_0$ such that  inequality \eqref{eq:Diff_Boundary} holds for any $X_1, X_2$ in the $\epsilon_0$-relative part of $\mathcal{T}(S)$.
		\end{proposition}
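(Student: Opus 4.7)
The proposition is essentially an assembly result: all the serious work has been done in Lemma \ref{lemma:C1_Pb_Final}, Lemma \ref{lemma:C1_Pd_Final}, and Lemma \ref{lemma:C1_Pd*_Final}. My plan is to combine these three estimates by way of the elementary mediant inequality and then translate back from the parameters $a_i, b_i', d_i, d_i', c_i, c_i'$ to the hyperbolic lengths $\ell_{X_j}(\alpha), \ell_{X_j}(\gamma), \ell_{X_j}(\gamma')$.

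First, I would recall that on each $X_i$ the geodesic representative of $\beta$ is split by the inner boundaries $C_i$ and $C_i'$ of the regular annuli around $\gamma$ and $\gamma'$ into $\beta_i^A$, $\beta_i^Q$, and $\beta_i'^A$, so that $b_i = d_i + b_i' + d_i'$. Then I would apply the trivial observation that for positive numbers,
\begin{equation*}
\frac{b_1}{b_2} = \frac{b_1'+d_1+d_1'}{b_2'+d_2+d_2'} \leq \max\Bigl\{\frac{b_1'}{b_2'},\; \frac{d_1}{d_2},\; \frac{d_1'}{d_2'}\Bigr\}.
\end{equation*}

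Next, I would insert the three bounds just proved. Lemma \ref{lemma:C1_Pb_Final} gives $b_1'/b_2' \leq K_1'\cdot \max\{1, a_1/a_2\}$, and Lemmas \ref{lemma:C1_Pd_Final} and \ref{lemma:C1_Pd*_Final} give $d_1/d_2 \leq 2\max\{1, c_2/c_1\}$ and $d_1'/d_2' \leq 2\max\{1, c_2'/c_1'\}$. Since $a_i = \ell_{X_i}(\alpha)/2$, $c_i = \ell_{X_i}(\gamma)/2$, and $c_i' = \ell_{X_i}(\gamma')/2$, the factors of $1/2$ cancel in every ratio, so $a_1/a_2 = \ell_{X_1}(\alpha)/\ell_{X_2}(\alpha)$ and likewise for $c_2/c_1$ and $c_2'/c_1'$. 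Setting $K_1 = \max\{K_1', 2\}$, the chain of inequalities delivers exactly \eqref{eq:Diff_Boundary}, since $\ell_{X_1}(\beta)/\ell_{X_2}(\beta) = b_1/b_2$.

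The only subtlety I would flag is the exceptional case $a_i = 0$ (which occurs when $S = S_{0,1,2}$ and $\alpha$ is homotopic to a cusp): here the ratio $a_1/a_2$ is nominally $0/0$ and must be read as $1$, following the limiting convention set up in Example \ref{example1}. No genuine obstacle arises, because the constants $K_1'$, $2$ in the three preparatory lemmas are independent of the sequence $X_{i,n}$ used to approximate; so the proposition passes to the limit without change. I expect no other difficulty: this step is genuinely bookkeeping once the preparatory lemmas are in hand.
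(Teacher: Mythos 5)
Your proposal is correct and follows essentially the same route as the paper: the decomposition $b_i = d_i + b_i' + d_i'$, the mediant inequality bounding $b_1/b_2$ by $\max\{b_1'/b_2',\, d_1/d_2,\, d_1'/d_2'\}$, substitution of Lemmas \ref{lemma:C1_Pb_Final}, \ref{lemma:C1_Pd_Final} and \ref{lemma:C1_Pd*_Final}, and the choice $K_1=\max\{K_1',2\}$ are exactly the paper's argument, and your handling of the degenerate case via Example \ref{example1} matches as well. Nothing further is needed.
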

		
\begin{proof} We apply the results of Lemma \ref{lemma:C1_Pb_Final}
and Lemma \ref{lemma:C1_Pd_Final} and the notations in their proof.
It follows that the ratio of $\ell_1(\beta)$ and $\ell_2(\beta)$ satisfies
			\begin{equation*}
				\begin{split}
					\cfrac{\ell_1(\beta)}{\ell_2(\beta)}
					&\leq \max \{ \cfrac{b_1'}{b_2'},\ \cfrac{d_1}{d_2},\ \cfrac{d_1'}{d_2'} \} \\
					&\leq \max \{K_1' \cdot \max \{ 1,\cfrac{a_1}{a_2} \},\ 2 \cdot \max \{ 1,\cfrac{c_2}{c_1}\},\ 2 \cdot \max \{ 1,\cfrac{c_2'}{c_1'} \} \} \\
					&\leq K_1 \cdot \max \{ 1, \cfrac{a_1}{a_2}, \cfrac{c_2}{c_1}, \cfrac{c_2'}{c_1'} \} \\
					&=  K_1 \cdot \max \{ 1, \cfrac{\ell_1(\alpha)}{\ell_2(\alpha)}, \cfrac{\ell_2(\gamma)}{\ell_1(\gamma)}, \cfrac{\ell_2(\gamma')}{\ell_1(\gamma')} \},
				\end{split}
			\end{equation*}
			where $K_1 = \max\{K_1',\ 2\}$ only depend on $\epsilon_0$.
		\end{proof}

\subsection{The case where $\gamma=\gamma'$.}\label{3.3}
$\newline$


Now we consider the case where $\gamma = \gamma'$.
		In this case, we denote by $\gamma$ the boundary component of $S$ where the two endpoints of $\beta$ lie. 		

        Consider a regular neighborhood of $\beta \cup \gamma$.
        It  is homotopic to a pair of pants
        whose boundary components consist of $\gamma$ and  two other simple closed curves, denoted by $\alpha$ and $\alpha'$.

        We will prove an analogue of inequality \eqref{eq:Diff_Boundary},
        that is, there exists a positive constant $K_2$ depending on $\epsilon_0$ such that
        \begin{equation}\label{eq:Same_Boundary}
            \cfrac{\ell_{X_1}(\beta)}{\ell_{X_2}(\beta)}
            \leq K_2 \cdot \max\{1,\ \cfrac{\ell_{X_1}(\alpha)}{\ell_{X_2}(\alpha)},\ \cfrac{\ell_{X_1}(\alpha')}{\ell_{X_2}(\alpha')},\ \cfrac{\ell_{X_2}(\gamma)}{\ell_{X_1}(\gamma)}\}.
        \end{equation}

\begin{remark}  \label{remark:alpha}
By Example \ref{example1}, we can assume that one of the curves $\gamma$ and $\gamma'$ is not homotopic to a puncture.
        Without loss of generality, we may suppose that $\ell_{X_1}(\alpha)\geq \ell_{X_1}(\alpha')$.
        Note that  $\alpha'$  maybe homotopic to a puncture. In this case, we shall identify a puncture
        with a simple closed geodesic with length zero and let $\frac{0}{0}=1$.
\end{remark}

       For $X_i,  \ i = 1, 2$, let $C_i$ be the inner boundary of the regular annulus around $\gamma$ with length $\ell_{X_i}(C_i) = \epsilon_0'$
       (the existence of such a regular annulus is given by Lemma \ref{lemma:separated}).
        Then $C_i$ separates $\beta$ into three parts $\beta^{A}_{i}$, $\beta^{Q}_{i}$ and ${\beta'}^{A}_{i}$, for $i = 1, 2$.
	See Figure 7.

		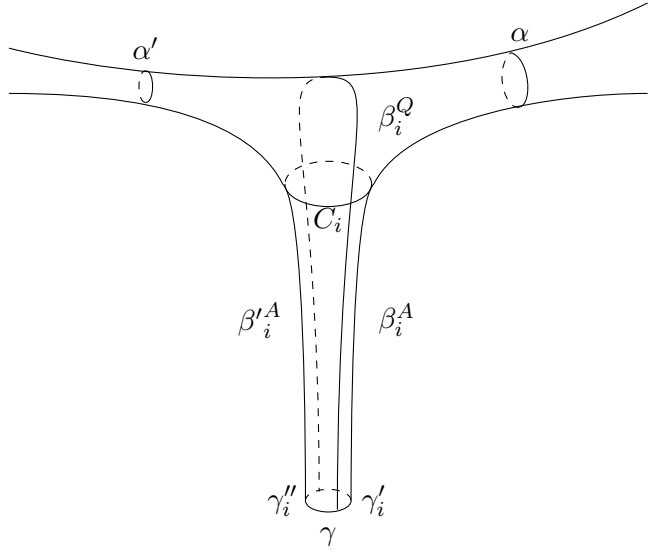
\begin{figure}[ht]\label{fig:Case_2_Surface}
			\begin{tikzpicture}[scale=0.6]
				\draw		(0,-0.8)	node	{$\gamma$};
				\draw		(1,0)	node	{$\gamma_i'$};
				\draw		(-1,0)	node	{$\gamma_i''$};
				\draw		(-0.5,0)	arc	(180:360:	0.5	and	0.25);
				\draw[dashed](0.5,0)	arc	(0:180:	0.5	and	0.25);				
				\draw		(0,6.2)	node	{$C_i$};
				\draw		(-0.95,7)	arc	(180:360:	0.95	and	0.5);
				\draw[dashed](0.95,7)	arc	(0:180:	0.95	and	0.5);
				
				\draw		(-0.5,0)	..controls	(-0.5,1)	and	(-0.5,6.5)..	(-1,7);
				\draw		(0.5,0)	..controls	(0.5,1)	and	(0.5,6.5)..	(1,7);
				
				\draw		(-1,7)	..controls	(-1.5,8)	and	(-3,9)..	(-7,9);
				\draw		(1,7)	..controls	(1.75,8.5)	and	(5,9)..	(7,9);
				\draw		(7,11)	..controls	(3,9)		and	(-3,9)..	(-7,10);
				
				\draw		(-4,10)	node	{$\alpha'$};
				\draw		(-4,8.8)	..controls	(-3.8,8.8)	and	(-3.8,9.5)..	(-4,9.5);
				\draw	[dashed](-4,8.8)	..controls	(-4.2,8.8)	and	(-4.2,9.5)..	(-4,9.5);
				\draw		(4.2,10.3)	node	{$\alpha$};
				\draw		(4.2,8.7)	..controls	(4.5,8.8)	and	(4.4,9.8)..	(4,9.9);
				\draw	[dashed](4.2,8.7)	..controls	(3.9,8.6)	and	(3.6,9.5)..	(4,9.9);
				
				\draw		(1.5,4)	node	{$\beta^{A}_{i}$};
				\draw		(-1.5,4)	node	{${\beta'}^{A}_{i}$};
				\draw		(1.5,8.5)	node	{$\beta^{Q}_{i}$};
				\draw		(0,9.36)		..controls	(1.3,9.36)		and	(0.2,8)..	(0.2,-0.2);
				\draw	[dashed](0,9.36)		..controls	(-1.3,9.36)		and	(-0.2,8)..	(-0.2,0.2);
			\end{tikzpicture}
			\caption{\small{An example of the pair of pants when $\gamma = \gamma'$ and $\epsilon_0 < e^{-1} \ln(1+\sqrt{2})$, for $i = 1, 2$. }}
		\end{figure}

      One can see from Figure 7   that the endpoints of $\beta$ separate the geodesic $\gamma$ into two parts, denoted by
       $\gamma_i'$ and $\gamma_i''$. Note that
        $\gamma_i' \cup \beta$ (resp. $\gamma_i'' \cup \beta$) is isotopic to $\alpha$ (resp. $\alpha'$), for $i = 1, 2$.

       By cutting each pair of pants along the three perpendicular geodesic arcs connecting the boundary components,
        we have two symmetric right-angled hexagons on $X_i$, for $i=1,2$.
        We consider one of them for $i=1,2$, as we shown in Figure 8.
		To simplify notation, we denote $\ell_{X_i}$ by $\ell_i$ and let $c_0' = \epsilon_0'/2$, $b_i = \ell_i(\beta^{Q}_{i})/2$,
$d_i = \ell_i(\beta^{A}_{i}) = \ell_i({\beta'}^{A}_{i})$, $a_{i} = \ell_i(\alpha)/2$,
$a_{i}' = \ell_i(\alpha')/2$, $c_i' = \ell_i(\gamma_i')/2$ and $c_i'' = \ell_i(\gamma_i'')/2$, for $i = 1, 2$.
See Figure 8. Since $l_i(\alpha) \geq l_i(\alpha')$, we have  $a_{i} \geq a_{i}'$, for $i =1,\ 2$.

		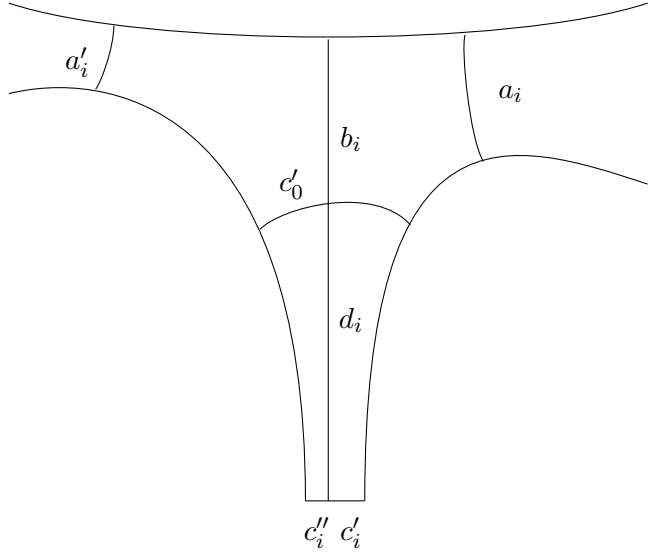
\begin{figure}[ht]\label{fig:Case_2_Pentagon}
			\begin{tikzpicture}[scale=0.6]
				\draw		(0.5,-0.7)	node	{$c_i'$};
				\draw		(-0.25,-0.7)	node	{$c_i''$};
				\draw		(-0.5,0)		..controls	(0.1,0)	and	(0.4,0)..	(0.8,0);
				\draw		(-0.8,7)	node	{$c_0'$};
				\draw		(-1.5,6)		..controls	(-1,6.5)	and	(1,7)..	(1.8,6.1);
				
				\draw		(0.5,4)	node	{$d_i$};
				\draw		(0.5,8)	node	{$b_i$};
				\draw		(0.8,0)	..controls	(0.8,9)	and	(4,8)..	(7,7);
				\draw		(-0.5,0)	..controls	(-0.5,9)	and	(-5,9.5)..	(-7,9);
				\draw		(-7,11)	..controls	(-4,10)	and	(4,10)..	(7,11);
				\draw		(0,0)		..controls	(0,1)	and	(0,8)..	(0,10.2);
				
				\draw		(4,9)	node	{$a_{i}$};
				\draw		(3.4,7.5)	..controls	(3.1,8)	and	(2.9,10)..	(3,10.3);
				\draw		(-5.5,9.7)	node	{$a_{i}'$};
				\draw		(-5.1,9.1)	..controls	(-5,9.2)	and	(-4.7,10)..	(-4.7,10.5);					
			\end{tikzpicture}
			\caption{\small{An example of the hexagon on $X_i$ when $\gamma = \gamma'$ and $\epsilon_0 < e^{-1} \ln(1+\sqrt{2})$, for $i=1,2$. }}
		\end{figure}

It's easy to show that the ratio of $\ell_1(\beta)$ and $\ell_2(\beta)$ satisfies
		\begin{equation}\label{eq:C2_main}
			\begin{split}
				\cfrac{\ell_1(\beta)}{\ell_2(\beta)}
				&= \cfrac{2(b_1+d_1)}{2(b_2+d_2)}
				\leq \max \{ \cfrac{b_1}{b_2},\ \cfrac{d_1}{d_2} \} \ .
			\end{split}
		\end{equation}				
		As in the case where $\gamma \neq \gamma'$, we will control $b_1/b_2$ and $d_1/d_2$  by
the  ratios of
lengths of $\alpha$ and $\gamma$.
We will prove these results in Lemma \ref{lemma:C2_Pb_Final} and Lemma \ref{lemma:C2_Pd_Final}.

We first discuss the $b_1/b_2$ part.
		\begin{lemma}\label{lemma:C2_Pb_Final}
			There exists a positive constant $K_2'$ depending on $\epsilon_0$ such that
			\begin{equation}\label{eq:C2_Pb_Final}
				\cfrac{b_1}{b_2} \leq K_2' \cdot \max\{1, \cfrac{\ell_1(\alpha)}{\ell_2(\alpha)}\} \ .
			\end{equation}
		\end{lemma}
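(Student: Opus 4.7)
The plan is to follow the same three-step blueprint used in the proof of Lemma \ref{lemma:C1_Pb_Final}, but adapted to the hexagon of Figure 8.

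First, I would establish a uniform positive lower bound $b_i \geq M_0'$ with $M_0' = M_0'(\epsilon_0)$. The argument of Lemma \ref{lemma:bi} transfers directly: enlarge the regular annulus around $\gamma$ on $X_i$ to a concentric annulus whose inner boundary $\widetilde C_i$ has length exactly $2$, and note that the middle arc $\beta^Q_i$ must exit this enlarged collar at \emph{both} of its endpoints, since both endpoints of $\beta$ lie on $\gamma$. The explicit estimate produced by \eqref{eq:RegularAnnulis} and \eqref{eq:Arcosh}, applied on each side of $\beta^Q_i$, then gives $2 b_i = \ell_i(\beta^Q_i) \geq M_0$, so $b_i \geq M_0/2$.

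Second, I would derive a bounded-difference estimate $|a_i - b_i| \leq D_2$ by applying the right-angled hexagon identity \eqref{eq:RightAngledHexagon} to the hexagon of Figure 8. Since both endpoints of $\beta$ lie on the same short boundary $\gamma$, the two ``short'' sides of the hexagon are $c_i'$ and $c_i''$, both bounded by $\epsilon_0/2$, and by the collar symmetry the arcs $\beta^A_i$ and ${\beta'}^A_i$ share the common length $d_i$, giving $c_i' \cosh d_i = c_i'' \cosh d_i = c_0'$. The same chain of $\sinh x \asymp x$ (for small $x$) and $\cosh x \asymp e^{x}$ (for large $x$) estimates used in Lemma \ref{lemma:difference} then produces $M_1^{-1} e^{b_i} \leq \text{(hexagon expression)} \leq M_1 e^{b_i}$, and comparing both sides with $\cosh a_i$ (using $a_i \geq a_i'$ from Remark \ref{remark:alpha} to absorb the $a_i'$-term into a multiple of $\cosh a_i$) yields $e^{a_i} \asymp e^{b_i}$ with constants depending only on $\epsilon_0$.

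Third, with $b_i \geq M_0'$ and $|a_i - b_i| \leq D_2$ in place, I would rerun the seven-case analysis of Lemma \ref{lemma:C1_Pb_Final}, fixing a threshold $M > 2 D_2$ and splitting on whether $b_1, b_2 \geq M$ and whether $a_1, a_2 \geq \epsilon_0$. In each branch, combining the lower bound with the difference bound yields $b_1/b_2 \leq K_2' \max\{1, a_1/a_2\}$ for an explicit constant $K_2' = K_2'(\epsilon_0)$ of the same form as in the previous lemma.

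The main obstacle I anticipate is Step 2. In the previous case, $c_i$ and $c_i'$ lay on distinct boundaries and appeared cleanly as the two ``opposite'' short sides in the hexagon identity; here, $c_i'$ and $c_i''$ are complementary arcs of a single boundary and sit adjacent in the hexagon rather than opposite, so the combinatorics that extracted $e^{a_1} \asymp e^{b_1'}$ from \eqref{eq:RightAngledHexagon} must be re-examined. The asymmetry between $\alpha$ and $\alpha'$ (with $a_i \geq a_i'$) is essential: it is what allows $\cosh a_i + \cosh a_i'$ to be bounded by a multiple of $\cosh a_i$ and thus isolates $a_i$ in the final comparison. The degenerate case where $\alpha'$ is a puncture is handled by the limiting convention of Example \ref{example1}.
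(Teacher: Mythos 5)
Your Steps 1 and 3 are sound and agree with the paper's proof: the lower bound $b_i \geq M_0/2$ of \eqref{eq:C2_Pb_2nd} is obtained exactly as you describe, and once $b_i$ is bounded below and $|a_i-b_i|$ is bounded above, the seven-case comparison of Lemma \ref{lemma:C1_Pb_Final} carries over verbatim. The gap is in Step 2, precisely where you flag the main obstacle. Two problems. First, the identity you invoke, $c_i'\cosh d_i = c_i''\cosh d_i = c_0'$, is false: the collar relation \eqref{eq:RegularAnnulis} gives $\ell_i(\gamma)\cosh d_i = \epsilon_0'$, that is $(c_i'+c_i'')\cosh d_i = c_0'$, and this does not split as you claim unless $c_i'=c_i''$, which forces $\ell_i(\alpha)=\ell_i(\alpha')$ and fails in general. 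Second, \eqref{eq:RightAngledHexagon} is the wrong tool here: in the hexagon of Figure 8 the segments $c_i'$ and $c_i''$ are the two halves of a single side (half of $\gamma$), and the half-arc of $\beta$, of length $b_i+d_i$, is not a side of that hexagon, so no application of \eqref{eq:RightAngledHexagon} to it produces a relation containing $b_i+d_i$; the quantity $\cosh a_i+\cosh a_i'$ that you plan to absorb never arises.

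What the paper does instead, and what your argument is missing, is to cut the hexagon along the half-arc of $\beta$ into two right-angled pentagons and apply the pentagon formula \eqref{eq:RightAngledPentagon}, which yields the product identities $\cosh a_i = \sinh(b_i+d_i)\sinh c_i'$ and $\cosh a_i' = \sinh(b_i+d_i)\sinh c_i''$. Dividing these and using the normalization $a_i\geq a_i'$ of Remark \ref{remark:alpha} gives $c_i'\geq c_i''$, hence $\tfrac14\ell_i(\gamma)\leq c_i'\leq \tfrac12\ell_i(\gamma)$ (inequality \eqref{eq:C2_Pb_1st}); combined with $\ell_i(\gamma)\cosh d_i=\epsilon_0'$ this is the correct substitute for your false identity and is exactly what closes the two-sided estimate $e^{a_i}\asymp e^{b_i}$. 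So the role of $a_i\geq a_i'$ is not to control $\cosh a_i+\cosh a_i'$ but to bound $c_i'$ from below relative to $\ell_i(\gamma)$; without such a lower bound, $\sinh c_i'$ could be far smaller than $\ell_i(\gamma)$ and the lower bound on $e^{a_i}$ in terms of $e^{b_i}$ would fail. A smaller omission: the normalization $\ell_{X_1}(\alpha)\geq\ell_{X_1}(\alpha')$ can be imposed on $X_1$ only, and the paper treats the case $\ell_{X_2}(\alpha)<\ell_{X_2}(\alpha')$ separately, using $\ell_1(\alpha)/\ell_2(\alpha')\leq\ell_1(\alpha)/\ell_2(\alpha)$; your write-up silently assumes $a_i\geq a_i'$ for both $i$.
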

		
		\begin{proof}
We follow the same outline as in the proof of Lemma \ref{lemma:C1_Pb_Final}.
By our assumption (see Remark \ref{remark:alpha}), $\ell_{X_1}(\alpha)\geq \ell_{X_1}(\alpha')$.
Let us first consider the case where $\ell_{X_2}(\alpha)\geq \ell_{X_2}(\alpha')$.

As $\beta_i^Q$, $i=1,2$, can be viewed as the middle part of $\beta$, by the same proof as that of \eqref{eq:C1_Pb_1st}
in Lemma \ref{lemma:bi}, we have
			\begin{equation}\label{eq:C2_Pb_2nd}
				b_i \geq \frac{M_0}{2},\text{ for } i=1,2.
			\end{equation}

			Next we  discuss the relation between $c_i'$ and $\ell_i(\gamma)$, for $i = 1, 2$.
           It is obvious that $c'_1 < \ell_1(\gamma)/2$.
			
By \eqref{eq:RightAngledPentagon}, we have
$$
\cosh a_{1}/\sinh c_1' = \sinh(b_1+d_1)= \cosh a_{1}'/\sinh c_1''.
$$
Since (by assumption) $a_{1} \geq a_{1}'$, we have
$$
\sinh c_1'/\sinh c_1''= \cosh a_{1}/\cosh a_{1}' \geq 1.
$$
			Therefore $c_1' \geq c_1''$.
			Since $2 (c_1' + c_1'') = \ell_1(\gamma)$, we have $c_1' \geq \ell_1(\gamma)/4$. We have similar result for $c'_2$.
			
It follows that
			\begin{equation}\label{eq:C2_Pb_1st}
				\cfrac{1}{4}\ \ell_i(\gamma) \leq c'_i < \cfrac{1}{2}\ \ell_i(\gamma),  \ \mathrm{ for} \ i=1,2.
			\end{equation}				
			
Since $c'_i, \ i=1,2$ are bounded above by $\frac{\epsilon_0}{2}$,
 there is a positive constant $k_2$ depending on $\epsilon_0$ such that
			\begin{equation}\label{eq:C2_Pb_4th}
				c'_i \leq \sinh c'_i \leq k_2 \cdot c'_i,  \ \mathrm{ for} \ i=1,2.
			\end{equation}

Since $b_i+d_i, \ i=1,2$ are bounded by $M_0$ from below, we can choose  $k_2$ such that
			\begin{equation}\label{eq:C2_Pb_3rd}
					k_2^{-1}\cdot e^{b_i+d_i} \leq \sinh(b_i+d_i) \leq \cfrac{1}{2} \cdot  e^{b_i+d_i}, \ \mathrm{ for} \ i=1,2.
			\end{equation}

			Similar to the case where $\gamma \neq \gamma'$, we  can estimate the difference between $a_{i}$ and $b_i$,  $i = 1 , 2$.
	
By \eqref{eq:RightAngledPentagon}, \eqref{eq:C2_Pb_1st}, \eqref{eq:C2_Pb_4th}, \eqref{eq:C2_Pb_3rd}  and the fact that $\ell_i(\gamma) \cdot \cosh d_i = \epsilon_0'$, $i=1,2$,  we have (for $i=1,2$)
			\begin{equation*}
				\begin{split}
					e^{a_{i}} &\geq \cosh a_{i}
					= \sinh(b_i+d_i) \cdot \sinh c'_i \\
					&\geq k_2^{-1} \cdot e^{b_i+d_i} \cdot c'_i
					\geq k_2^{-1} e^{b_i} \cdot \cosh{d_i} \cdot \cfrac{1}{4}\  \ell_i(\gamma) \\
					&= \cfrac{k_2^{-1} \epsilon_0'}{4} \cdot e^{b_i}
				\end{split}
			\end{equation*}	
			and
			\begin{equation*}
				\begin{split}
					e^{a_{i}} &\leq 2\ \cosh a_{i}
					= 2\ \sinh(b_i+d_i) \cdot \sinh c'_i \\
					&\leq e^{b_i+d_i} \cdot k_2\cdot c'_i
					< k_2 e^{b_i} \cdot 2\ \cosh d_i \cdot \cfrac{1}{2}\ \ell_i(\gamma) \\
					&= k_2 \epsilon_0' \cdot e^{b_i}.
				\end{split}
			\end{equation*}	
			
Let $D_2 = \max \{ |\ln (k_2^{-1} \epsilon_0') - \ln 4|,\ |\ln(k_2 \epsilon_0')| \}$.
            We have
			\begin{equation}\label{eq:C2_Pb_5th}
				|b_i - a_{i}| \leq  D_2, \  i=1,2.
			\end{equation}		

Now we have inequalities $(\ref{eq:C2_Pb_2nd})$ and $(\ref{eq:C2_Pb_5th})$,
the analogy of $(\ref{eq:C1_Pb_1st})$ and $(\ref{eq:C1_Pb_5nd})$ previously.
By the same proof as in Lemma \ref{lemma:C1_Pb_Final} (see
the discussion after Lemma \ref{lemma:difference}), we can show that there is a constant $K_2'$ depending on $\epsilon_0$ such that
$$\frac{b_1}{b_2}\leq K_2' \cdot \max \{1, \frac{a_1}{a_2}\}.$$ Since
$\frac{a_1}{a_2}=\frac{\ell_1(\alpha)}{\ell_2(\alpha)}$, we finish the proof
under the assumption that $\ell_{X_2}(\alpha)\geq \ell_{X_2}(\alpha')$.

If
$ \ell_{X_2}(\alpha)< \ell_{X_2}(\alpha'),$ then we can
modify the above argument to show that
\begin{equation*}
				\cfrac{b_1}{b_2} \leq K_2' \cdot \max\{1, \cfrac{\ell_1(\alpha)}{\ell_2(\alpha')}\} \ .
			\end{equation*}
Since $\cfrac{\ell_1(\alpha)}{\ell_2(\alpha')}\leq \cfrac{\ell_1(\alpha)}{\ell_2(\alpha)}$,
the inequality $(\ref{eq:C2_Pb_Final})$ remains true.
\end{proof}
           	Next we will discuss the $d_1/d_2$ part.
		\begin{lemma}\label{lemma:C2_Pd_Final}
			We have
			\begin{equation}\label{eq:C2_Pd_Final}
				\cfrac{d_1}{d_2}
				\leq 2 \cdot \max\{1, \cfrac{\ell_2(\gamma)}{\ell_1(\gamma)}\}.
			\end{equation}
		\end{lemma}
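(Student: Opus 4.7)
The plan is to imitate the proof of Lemma \ref{lemma:C1_Pd_Final} almost verbatim, with the $\gamma\neq\gamma'$ hexagon replaced by its analogue in the case $\gamma=\gamma'$. The crucial preliminary observation is that, by the standard fact that a hyperbolic geodesic which meets $\gamma$ perpendicularly also meets every equidistant curve of $\gamma$ perpendicularly (visible already in the upper-half-plane model, where perpendiculars to $\gamma$ are Euclidean semicircles centred on the foot of $\gamma$ and equidistant curves are Euclidean rays from that foot), each of the terminal arcs $\beta^{A}_{i}$ realises the distance from $\gamma$ to $C_i$ along $\beta$. Hence applying the regular-annulus identity \eqref{eq:RegularAnnulis} to the annulus bounded by $\gamma$ and $C_i$ yields
$$\ell_{i}(\gamma)\cdot\cosh d_{i}\;=\;\ell_{i}(C_{i})\;=\;\epsilon_{0}^{\prime},\qquad i=1,2,$$
which is the analogue of the identity $c_1\cosh d_1=c_0'$ used in the earlier lemma.

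With this identity in hand I would set $x_i:=\epsilon_{0}^{\prime}/\ell_{i}(\gamma)$; the standing assumption $\epsilon_{0}<e^{-1}\ln(1+\sqrt{2})=e^{-1}\epsilon_{0}^{\prime}$ forces $x_i>e$, so \eqref{eq:Arcosh} together with the elementary estimates $\ln x\le\cosh^{-1}(x)\le\ln(2x)\le 2\ln x$ (for $x>e$, since $\sqrt{x^{2}-1}\le x$ and $\ln 2\le\ln x$) gives
$$\ln\epsilon_{0}^{\prime}-\ln\ell_{i}(\gamma)\;\le\;d_{i}\;\le\;2\bigl(\ln\epsilon_{0}^{\prime}-\ln\ell_{i}(\gamma)\bigr),\qquad i=1,2,$$
and hence
$$\frac{d_{1}}{d_{2}}\;\le\;2\cdot\frac{\ln\epsilon_{0}^{\prime}-\ln\ell_{1}(\gamma)}{\ln\epsilon_{0}^{\prime}-\ln\ell_{2}(\gamma)}.$$

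To conclude I would split according to whether $\ell_{2}(\gamma)\le\ell_{1}(\gamma)$, in which case the quotient on the right is bounded by $1$ and there is nothing left to do, or $\ell_{2}(\gamma)>\ell_{1}(\gamma)$, in which case $x_1>x_2\ge e$ and the same monotonicity observation as in Lemma \ref{lemma:C1_Pd_Final}, namely that $f_{3}(x)=x^{-1}\ln x$ is decreasing on $[e,\infty)$, rearranges to $(\ln x_{1})/(\ln x_{2})\le x_{1}/x_{2}=\ell_{2}(\gamma)/\ell_{1}(\gamma)$. Combining the two cases gives \eqref{eq:C2_Pd_Final}.

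The only step that is not a direct copy of the $\gamma\neq\gamma'$ argument is the opening orthogonality identification $d_{i}=d_{X_{i}}(\gamma,C_{i})$, and I therefore expect this to be the (very mild) main obstacle; once it is established, every subsequent computation is simply transplanted from the proof of Lemma \ref{lemma:C1_Pd_Final}.
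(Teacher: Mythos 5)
Your proposal is correct and follows exactly the paper's argument: the identity $\ell_i(\gamma)\cosh d_i=\epsilon_0'$ from \eqref{eq:RegularAnnulis}, the two-sided estimate $\ln\epsilon_0'-\ln\ell_i(\gamma)\le d_i\le 2(\ln\epsilon_0'-\ln\ell_i(\gamma))$, and the case split using the monotonicity of $x^{-1}\ln x$ on $[e,\infty)$. Your explicit justification of the orthogonality identification $d_i=d_{X_i}(\gamma,C_i)$ is a point the paper leaves implicit, but it is the same proof.
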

		
		\begin{proof}
			The proof is the same as that of Lemma \ref{lemma:C1_Pd_Final}. We have
            \begin{equation*}
                \ln \epsilon_0' - \ln \ell_i(\gamma)
                \leq d_i
                \leq 2 (\ln \epsilon_0' - \ln \ell_i(\gamma)), i=1,2.
            \end{equation*}
            Then
			\begin{equation*}
			\cfrac{d_1}{d_2}
			\leq 2 \cdot \cfrac{\ln \epsilon_0' - \ln \ell_1(\gamma)}{\ln \epsilon_0' - \ln \ell_2(\gamma)}.
			\end{equation*}			
			
As we did in the case where $\gamma \neq \gamma'$,
we consider the two cases depending on whether $\ell_2(\gamma) \leq \ell_1(\gamma)$ or not.
			
			If $\ell_2(\gamma) \leq \ell_1(\gamma)$, we have
 $$(\ln \epsilon_0' - \ln \ell_1(\gamma))/(\ln \epsilon_0' - \ln \ell_2(\gamma)) \leq 1.$$
			
If $\ell_2(\gamma) > \ell_1(\gamma)$,
		by the same proof as that of Lemma \ref{lemma:C1_Pd_Final}, we have
$$
(\ln \epsilon_0' - \ln \ell_1(\gamma))/(\ln \epsilon_0' - \ln \ell_2(\gamma)) \leq \ell_2(\gamma)/\ell_1(\gamma).
$$

			From the above discussions, we have
			\begin{equation*}
				\cfrac{d_1}{d_2}
				\leq 2 \cdot \max\{1, \cfrac{\ell_2(\gamma)}{\ell_1(\gamma)}\}.
			\end{equation*}

		\end{proof}

		\begin{proposition}\label{lemma:Same_Boundary}
			Let $\epsilon_0$ be a positive number with $\epsilon_0 < e^{-1} \ln(1+\sqrt{2})$ and let $X_1,X_2$ be any
two  hyperbolic metrics in the
$\epsilon_0$-relative part of $\mathcal{T}(S)$.
            For  any essential arc $\beta \in \mathcal{B}(S)$ with endpoints lying on the same boundary component $\gamma$ of $S$,
            let $\alpha$ and $\alpha'$ be the associated simple closed curves homotopic the boundaries of a regular  neighborhood of
            $\beta \cup \gamma$.
			Then there exists a positive constant $K_2$ depending on $\epsilon_0$ such that the inequality \eqref{eq:Same_Boundary} holds.
		\end{proposition}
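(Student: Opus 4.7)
The plan is to assemble the proposition directly from the two ratio bounds already established in Lemma \ref{lemma:C2_Pb_Final} and Lemma \ref{lemma:C2_Pd_Final}, using the arc-decomposition inequality \eqref{eq:C2_main} as the glue. Concretely, I would first invoke the setup from the start of Section 3.3: the inner boundary $C_i$ of the regular annulus around $\gamma$ with length $\epsilon_0'$ splits $\beta$ into the middle piece $\beta^Q_i$ and two symmetric collar pieces $\beta^A_i$ and ${\beta'}^A_i$ of equal length $d_i$, and the cutting of the pair of pants along the three common perpendiculars yields two symmetric right-angled hexagons. This gives $\ell_{X_i}(\beta) = 2(b_i + d_i)$ for $i = 1, 2$.

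Next, by the elementary mediant inequality $(p+q)/(r+s) \le \max\{p/r,\,q/s\}$ for positive reals, one reduces matters to
$$
\frac{\ell_{X_1}(\beta)}{\ell_{X_2}(\beta)} \;\le\; \max\!\left\{\frac{b_1}{b_2},\; \frac{d_1}{d_2}\right\},
$$
which is exactly \eqref{eq:C2_main}. Invoking the convention of Remark \ref{remark:alpha} we may relabel so that $\ell_{X_1}(\alpha) \ge \ell_{X_1}(\alpha')$; Lemma \ref{lemma:C2_Pb_Final} then yields $b_1/b_2 \le K_2' \cdot \max\{1, \ell_{X_1}(\alpha)/\ell_{X_2}(\alpha)\}$, and Lemma \ref{lemma:C2_Pd_Final} yields $d_1/d_2 \le 2 \cdot \max\{1, \ell_{X_2}(\gamma)/\ell_{X_1}(\gamma)\}$. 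Setting $K_2 = \max\{K_2',\,2\}$ and combining gives
$$
\frac{\ell_{X_1}(\beta)}{\ell_{X_2}(\beta)} \;\le\; K_2 \cdot \max\!\left\{1,\; \frac{\ell_{X_1}(\alpha)}{\ell_{X_2}(\alpha)},\; \frac{\ell_{X_2}(\gamma)}{\ell_{X_1}(\gamma)}\right\}.
$$
Since $\ell_{X_1}(\alpha)/\ell_{X_2}(\alpha)$ is trivially at most $\max\{\ell_{X_1}(\alpha)/\ell_{X_2}(\alpha), \ell_{X_1}(\alpha')/\ell_{X_2}(\alpha')\}$, the inequality \eqref{eq:Same_Boundary} follows at once; the constant $K_2$ depends only on $\epsilon_0$ because both $K_2'$ and the constant $2$ do.

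There is essentially no hard step left: all the geometric analysis (the lower bound $b_i \ge M_0/2$ for the middle part, the estimate $|b_i - a_i| \le D_2$ coming from the right-angled pentagon identity, and the logarithmic comparison between $d_i$ and $\ln(\epsilon_0'/\ell_i(\gamma))$) was carried out in the two preceding lemmas. The only small bookkeeping item is the symmetric inclusion of $\alpha'$ in the final max, which is handled by the convention of Remark \ref{remark:alpha}; this convention is harmless because both sides of \eqref{eq:Same_Boundary} are unchanged by swapping the labels of $\alpha$ and $\alpha'$, and the right-hand side only grows if we replace $\ell_{X_1}(\alpha)/\ell_{X_2}(\alpha)$ by the maximum over the two curves.
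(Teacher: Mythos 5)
Your proposal is correct and follows essentially the same route as the paper: it combines \eqref{eq:C2_main} with Lemma \ref{lemma:C2_Pb_Final} and Lemma \ref{lemma:C2_Pd_Final}, takes $K_2 = \max\{K_2',\,2\}$, and uses the relabeling convention of Remark \ref{remark:alpha} (justified by the symmetry of \eqref{eq:Same_Boundary} in $\alpha$ and $\alpha'$), exactly as in the paper's own proof.
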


		\begin{proof}
			By \eqref{eq:C2_main}, \eqref{eq:C2_Pb_Final} and \eqref{eq:C2_Pd_Final}, we have (with the
assumption that $\ell_{X_1}(\alpha)\geq \ell_{X_1}(\alpha')$)
			\begin{equation*}
				\begin{split}
					\cfrac{\ell_1(\beta)}{\ell_2(\beta)}
					&\leq \max\{ \cfrac{b_1}{b_2}, \cfrac{d_1}{d_2} \} \\
					&\leq \max\{ K_2' \cdot \max\{ 1, \ \cfrac{\ell_1(\alpha_1)}{\ell_2(\alpha)} \}, \ 2 \cdot \max\{1,\ \cfrac{\ell_2(\gamma)}{\ell_1(\gamma)} \} \} \\
					&\leq K_2 \cdot \max\{ 1, \  \cfrac{\ell_1(\alpha)}{\ell_2(\alpha)}, \ \cfrac{\ell_2(\gamma)}{\ell_1(\gamma)} \} \ ,
				\end{split}
			\end{equation*}
			where $K_2 = \max\{K_2',\ 2\}$ is a positive constant depending on $\epsilon_0$.
		\end{proof}
		
\subsection{Corollary.}		By Proposition \ref{lemma:Diff_Boundary} and Proposition \ref{lemma:Same_Boundary}, we have the following corollary.
		
		\begin{corollary}\label{lemma:key_1}
			Let $\epsilon_0$ be a positive number with $\epsilon_0 < e^{-1} \ln(1+\sqrt{2})$.
			There exists a positive constant $K$ depending on $\epsilon_0$ such that
			\begin{equation*}
				\begin{split}
					 \sup_{\beta \in \mathcal{B}(S)} \{ \cfrac{\ell_{X_1}(\beta)}{\ell_{X_2}(\beta)}, \cfrac{\ell_{X_2}(\beta)}{\ell_{X_1}(\beta)} \}
					&\leq K \cdot \sup_{\alpha \in \mathcal{C}(S)} \{ \cfrac{\ell_{X_1}(\alpha)}{\ell_{X_2}(\alpha)}, \cfrac{\ell_{X_2}(\alpha)}{\ell_{X_1}(\alpha)} \},
				\end{split}
			\end{equation*}		
for any $X_1,X_2$ on the $\epsilon_0$-relative part of $\mathcal{T}(S)$.	
		\end{corollary}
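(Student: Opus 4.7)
The plan is a direct assembly of Proposition \ref{lemma:Diff_Boundary} and Proposition \ref{lemma:Same_Boundary}, which between them already bound the ratio $\ell_{X_1}(\beta)/\ell_{X_2}(\beta)$ for every essential arc $\beta$ in terms of length ratios of certain associated simple closed curves. First I would introduce the shorthand
$$L := \sup_{\alpha \in \mathcal{C}(S)} \max\Bigl\{\cfrac{\ell_{X_1}(\alpha)}{\ell_{X_2}(\alpha)},\ \cfrac{\ell_{X_2}(\alpha)}{\ell_{X_1}(\alpha)}\Bigr\},$$
which satisfies $L \geq 1$ since $\max\{x, 1/x\} \geq 1$ for every $x > 0$; if $L = +\infty$ the desired inequality is vacuous, so I may assume $L$ is finite. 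I would also record at the outset the fact, emphasized in Section \ref{intro}, that a boundary component of $S$ qualifies as an essential simple closed curve and hence belongs to $\mathcal{C}(S)$.

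Fix an arbitrary $\beta \in \mathcal{B}(S)$ and let $\gamma, \gamma'$ be the boundary components of $S$ containing the endpoints of $\beta$. If $\gamma \neq \gamma'$, I apply Proposition \ref{lemma:Diff_Boundary} to the associated simple closed curve $\alpha$ produced from a regular neighborhood of $\beta \cup \gamma \cup \gamma'$, and bound each of the three ratios on the right-hand side by $L$: the ratios involving $\gamma, \gamma'$ are bounded by $L$ since $\gamma, \gamma' \in \mathcal{C}(S)$, and the ratio involving $\alpha$ is bounded by $L$ either because $\alpha$ is itself in $\mathcal{C}(S)$ or, in the peripheral situation of Example \ref{example1}, because it equals $1$ under the convention $0/0 = 1$. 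This yields $\ell_{X_1}(\beta)/\ell_{X_2}(\beta) \leq K_1 L$. If instead $\gamma = \gamma'$, I apply Proposition \ref{lemma:Same_Boundary} to the two associated simple closed curves $\alpha, \alpha'$ and bound the three relevant ratios by $L$ in exactly the same way, allowing $\alpha'$ to be peripheral via the convention fixed in Remark \ref{remark:alpha}; this gives $\ell_{X_1}(\beta)/\ell_{X_2}(\beta) \leq K_2 L$.

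Setting $K := \max\{K_1, K_2\}$, which depends only on $\epsilon_0$, both cases give $\ell_{X_1}(\beta)/\ell_{X_2}(\beta) \leq K L$ for every $\beta \in \mathcal{B}(S)$. Since $L$ is symmetric in $X_1$ and $X_2$, swapping their roles and re-running the argument yields the matching bound $\ell_{X_2}(\beta)/\ell_{X_1}(\beta) \leq K L$. Taking the supremum over $\beta \in \mathcal{B}(S)$ delivers the claimed inequality. No real obstacle arises; the only point that demands care is the peripheral case for $\alpha$ or $\alpha'$, where the auxiliary curve is not literally an element of $\mathcal{C}(S)$ and the corresponding ratio must be interpreted via the $0/0 = 1$ convention already fixed in the paper rather than as an honest comparison between two positive lengths.
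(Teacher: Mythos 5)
Your proposal is correct and follows exactly the paper's route: the paper itself derives this corollary in one line from Proposition \ref{lemma:Diff_Boundary} and Proposition \ref{lemma:Same_Boundary}, and your write-up simply makes explicit the bookkeeping (that $L\geq 1$, that boundary curves lie in $\mathcal{C}(S)$, the $0/0=1$ convention for peripheral $\alpha$ or $\alpha'$, and the symmetrization in $X_1,X_2$) that the paper leaves implicit.
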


		\section{Proof of Theorem \ref{lemma:key}: The general case}\label{ProofTheorem2}

We have shown in Section \ref{ProofTheorem1} that the supremum of the ratio of lengths of arcs is controlled by that of simple closed curves in the case where $\epsilon_0 < e^{-1} \ln(1+\sqrt{2})$. In this section, we will prove the result in the general case.
We assume that $\epsilon_0 \geq e^{-1} \ln(1+\sqrt{2})$.

		Here is the idea of the proof. Recall that
in Section \ref{ProofTheorem1}, we separated the arc $\beta$ into several parts.
		In the case where $\epsilon_0 < e^{-1} \ln(1+\sqrt{2})$, the length of $\beta^{Q}_{i}$ is bounded below by a positive number and $\ell_{X_1}(\beta_1^Q)/\ell_{X_2}(\beta_2^Q)$ is controlled by the ratio of the lengths of some corresponding simple closed curves.
But in the case where $\epsilon_0 \geq e^{-1} \ln(1+\sqrt{2})$, in general, it's impossible to give a lower bound for $\ell_{X_2}(\beta_2^{Q})$.
        To deal with this, we will not separate the arc $\beta$ into parts unless the width of collar neighborhood of
        $\gamma$ or $\gamma'$ is large enough.

Let $\epsilon_0' = \ln (1+\sqrt{2})$.
		Let $X_1$ and $X_2$ be in the $\epsilon_0$-relative part of $\mathcal{T}(S)$.
		We apply the same notations $\beta$, $\gamma$, $\gamma'$ and $\alpha$ as in Section \ref{ProofTheorem1}.

\subsection{The case where $\gamma \neq \gamma'$.}

		First we will consider the case where $\gamma \neq \gamma'$.
        We define $C_i$ and $C_i'$, $i=1,2$,  as follows.

        If $\ell_{X_1}(\gamma) < e^{-1} \ln (1 + \sqrt{2})$,
        let $C_1$ be a closed curve isotopic to $\gamma$ with $\ell_{X_1}(C_1) = \epsilon_0'$ such that $C_1$ and $\gamma$ are the boundaries of a regular annulus around $\gamma$ on $X_1$.
	Otherwise, we let $C_1 = \gamma$. Similarly, we can define $C_1'$.  The corresponding notations on $X_2$ are obtained only by replacing subscript.
		
We have four cases according to whether $\ell_{X_i}(\gamma)$ or $\ell_{X_i}(\gamma')$, $i=1,2$,  is less than $e^{-1} \ln(1+\sqrt{2})$ or not.
		Figure 9  shows how to choose $C_i$ and $C_i'$ in each case, for $i =1,2$.
		
		\begin{figure}[htb]\label{figure:9}
			\subfigure [Case $\ell_{i}(\gamma) < e^{-1} \ln(1+\sqrt{2})$ and $\ell_{i}(\gamma') < e^{-1} \ln(1+\sqrt{2})$]
			{
				\begin{tikzpicture}[scale=0.5]
					\draw		(-9.6,0)	node	{$\gamma$};
					\draw		(-9,0.5)	arc	(90:-90:	0.25	and 0.5);
					\draw		(-9,0.5)	arc	(90:270:	0.25	and 0.5);
					\draw		(-4.5,2)	node	{$C_i$};
					\draw		(-4.5,1.5)	arc	(90:-90:	0.5	and 1);
					\draw[dashed]	(-4.5,1.5)	arc	(90:270:	0.5	and 1);
					\draw		(-9,0.5)	..controls	(-8,0.5)	and	(-6,1)..	(-4.5,1.51);
	
					\draw		(9.8,0)	node	{$\gamma'$};
					\draw		(9,0.5)	arc	(90:-90:	0.25	and 0.5);
					\draw	[dashed](9,0.5)	arc	(90:270:	0.25	and 0.5);
					\draw		(4.5,2)	node	{$C_i'$};
					\draw		(4.5,1.5)	arc	(90:-90:	0.5	and 1);
					\draw[dashed]	(4.5,1.5)	arc	(90:270:	0.5	and 1);
					\draw		(9,0.5)	..controls	(8,0.5)	and	(6,1)..	(4.5,1.51);
					
					\draw		(-9,-0.5)	..controls	(-4.5,-0.5)	and	(4.5,-0.5)..	(9,-0.5);
	
					\draw		(0,4)	node	{$\alpha$};
					\draw		(-3,4)	arc	(180:360:	3	and	1);
					\draw[dashed](-3,4)	arc	(180:0:	3	and	1);
					
					\draw		(-4.5,1.51)	..controls	(-4,1.8)	and	(-3,2)..	(-3,4);
					\draw		(-3.5,5)	..controls	(-3.5,5)	and	(-3,5)..	(-3,4);
					\draw		(4.5,1.51)	..controls	(4,1.8)		and	(3,2)..	(3,4);
					\draw		(3.5,5)	..controls	(3.5,5)	and	(3,5)..	(3,4);
					
					\draw		(0,-1.2)	node	{$\beta$};
					\draw		(0,0.9)	node	{$\beta^{Q}_{i}$};
					\draw		(-6,0.6)	node	{$\beta^{A}_{i}$};
					\draw		(6,0.6)	node	{${\beta'}^{A}_{i}$};
					\draw		(-8.75,0)..controls	(-4.5,0.4)	and	(4.5,0.4)..	(9.25,0);
				\end{tikzpicture}
			}

			\subfigure [Case $\ell_{i}(\gamma) \geq e^{-1} \ln(1+\sqrt{2})$ and $\ell_{i}(\gamma') < e^{-1} \ln(1+\sqrt{2})$]
			{
				\begin{tikzpicture}[scale=0.5]
					\draw		(-6,0.5)	node	{$C_i = \gamma$};
					\draw		(-4.3,1.5)	arc	(90:-90:	0.5	and 1);
					\draw		(-4.3,1.5)	arc	(90:270:	0.5	and 1);
	
					\draw		(9.8,0)	node	{$\gamma'$};
					\draw		(9,0.5)	arc	(90:-90:	0.25	and 0.5);
					\draw	[dashed](9,0.5)	arc	(90:270:	0.25	and 0.5);
					\draw		(4.5,2)	node	{$C_i'$};
					\draw		(4.5,1.5)	arc	(90:-90:	0.5	and 1);
					\draw[dashed]	(4.5,1.5)	arc	(90:270:	0.5	and 1);			
					\draw		(9,0.5)	..controls	(8,0.5)	and	(6,1)..	(4.5,1.5);
						
					\draw		(-4.3,-0.5)	..controls	(-4.5,-0.5)	and	(4.5,-0.5)..	(9,-0.5);
	
					\draw		(0,4)	node	{$\alpha$};
					\draw		(-3,4)	arc	(180:360:	3	and	1);
					\draw[dashed](-3,4)	arc	(180:0:	3	and	1);
					
					\draw		(-4.4,1.47)	..controls	(-4,1.6)	and	(-3,2)..	(-3,4);
					\draw		(-3.5,5)	..controls	(-3.5,5)	and	(-3,5)..	(-3,4);
					\draw		(4.5,1.5)	..controls	(4,1.8)		and	(3,2)..	(3,4);
					\draw		(3.5,5)	..controls	(3.5,5)	and	(3,5)..	(3,4);
					
					\draw		(2,-1.2)	node	{$\beta$};
					\draw		(0,0.9)	node	{$\beta^{Q}_{i}$};
					\draw		(6,0.6)	node	{${\beta'}^{A}_{i}$};
					
					\draw		(-3.8,0.4)..controls	(0,0.4)	and	(4.5,0.4)..	(9.25,0);
				\end{tikzpicture}
			}
				
			\subfigure [Case $\ell_{i}(\gamma) < e^{-1} \ln(1+\sqrt{2})$ and $\ell_{i}(\gamma') \geq e^{-1} \ln(1+\sqrt{2})$]
			{
				\begin{tikzpicture}[scale=0.5]
					\draw		(6.2,0.5)	node	{$C_i' = \gamma'$};
					\draw		(4.3,1.5)	arc	(90:-90:	0.5	and 1);
					\draw[dashed]	(4.3,1.5)	arc	(90:270:	0.5	and 1);
	
					\draw		(-9.6,0)	node	{$\gamma$};
					\draw		(-9,0.5)	arc	(90:-90:	0.25	and 0.5);
					\draw		(-9,0.5)	arc	(90:270:	0.25	and 0.5);
					\draw		(-4.5,2)	node	{$C_i$};
					\draw		(-4.5,1.5)	arc	(90:-90:	0.5	and 1);
					\draw[dashed]	(-4.5,1.5)	arc	(90:270:	0.5	and 1);			
					\draw		(-9,0.5)	..controls	(-8,0.5)	and	(-6,1)..	(-4.5,1.5);
						
					\draw		(4.3,-0.5)	..controls	(4.5,-0.5)	and	(-4.5,-0.5)..	(-9,-0.5);
	
					\draw		(0,4)	node	{$\alpha$};
					\draw		(-3,4)	arc	(180:360:	3	and	1);
					\draw[dashed](-3,4)	arc	(180:0:	3	and	1);
					
					\draw		(4.4,1.47)	..controls	(4,1.6)	and	(3,2)..	(3,4);
					\draw		(3.5,5)	..controls	(3.5,5)	and	(3,5)..	(3,4);
					\draw		(-4.5,1.5)	..controls	(-4,1.8)		and	(-3,2)..	(-3,4);
					\draw		(-3.5,5)	..controls	(-3.5,5)	and	(-3,5)..	(-3,4);
					
					\draw		(-2,-1.2)	node	{$\beta$};
					\draw		(-0,0.9)	node	{$\beta^{Q}_{i}$};
					\draw		(-6,0.6)	node	{$\beta^{A}_{i}$};
					
					\draw		(4.8,0.4)..controls	(0,0.4)	and	(-4.5,0.4)..	(-8.75,0);
				\end{tikzpicture}
			}
				
			\subfigure [Case $\ell_{i}(\gamma) \geq e^{-1} \ln(1+\sqrt{2})$ and $\ell_{i}(\gamma') \geq e^{-1} \ln(1+\sqrt{2})$]
			{
				\begin{tikzpicture}[scale=0.5]
					\draw		(6.5,0.5)	node	{$C_i' = \gamma'$};
					\draw		(4.3,1.5)	arc	(90:-90:	0.5	and 1);
					\draw[dashed]	(4.3,1.5)	arc	(90:270:	0.5	and 1);
	
					\draw		(-6.5,0.5)	node	{$C_i = \gamma$};
					\draw		(-4.3,1.5)	arc	(90:-90:	0.5	and 1);
					\draw		(-4.3,1.5)	arc	(90:270:	0.5	and 1);
	
					\draw		(4.3,-0.5)	..controls	(4.5,-0.5)	and	(-4.5,-0.5)..	(-4.3,-0.5);
					
					\draw		(0,4)	node	{$\alpha$};
					\draw		(-3,4)	arc	(180:360:	3	and	1);
					\draw[dashed](-3,4)	arc	(180:0:	3	and	1);
					
					\draw		(4.4,1.47)	..controls	(4,1.6)	and	(3,2)..	(3,4);
					\draw		(3.5,5)	..controls	(3.5,5)	and	(3,5)..	(3,4);
					\draw		(-4.4,1.47)	..controls	(-4,1.6)	and	(-3,2)..	(-3,4);
					\draw		(-3.5,5)	..controls	(-3.5,5)	and	(-3,5)..	(-3,4);
					
					\draw		(-0,1)	node	{$\beta^{Q}_{i} = \beta$};

					\draw		(4.8,0.4)..controls	(3,0.5)	and	(-3,0.5)..	(-3.8,0.4);
				\end{tikzpicture}
			}

			\caption{\small{Examples of pair of pants on $X_i$ when $\gamma \neq \gamma'$ and $\epsilon_0 \geq e^{-1} \ln(1+\sqrt{2})$, for $i = 1, 2$. }}
		\end{figure}
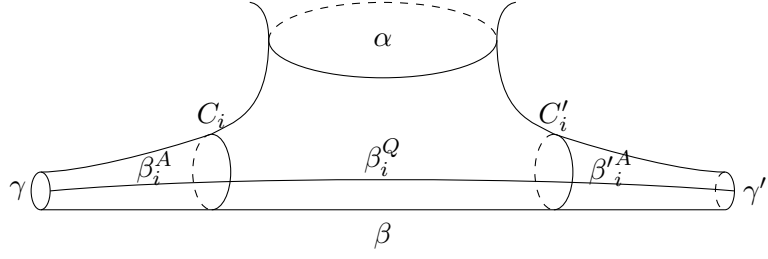
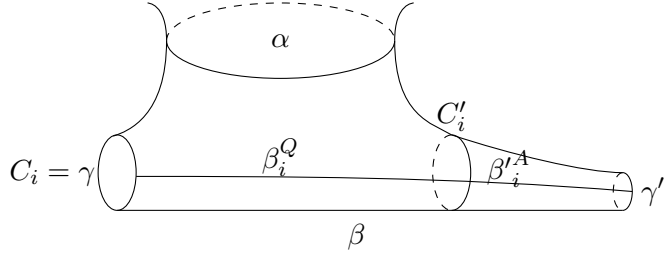
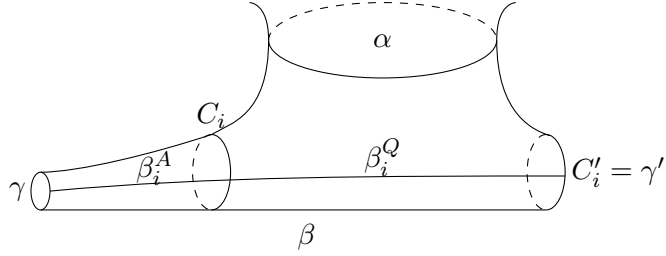
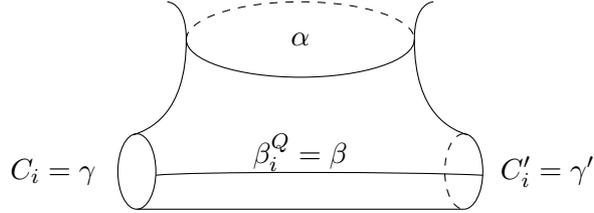
				
			Again, to simplify  notation, we denote $\ell_i=\ell_{X_i}$, for $i = 1,2$.
			Let $a_i = \ell_i(\alpha)/2$, $b_i = \ell_i(\beta)/2$, $c_i = \ell_i(\gamma)/2$, $c'_i = \ell_i(\gamma')/2$, $d_i = \ell_i(\beta^{A}_{i})$, $d'_i = \ell_i({\beta'}^{A}_{i})$, $b'_i = \ell_i(\beta^{Q}_{i})$, $e_i = \ell_i(C_i)/2$, $e_i' = \ell_i(C_i')/2$ and $c_0' = \epsilon_0'/2$, for $i = 1, 2$.
It should be noted that $d_i = 0$ and $e_i = c_i$ if $\ell_i(\gamma) \geq e^{-1} \ln(1+\sqrt{2})$.

\bigskip

With the above notations, let us describe Figure 9 in more details:
			
\bigskip

			Case (a): $\ell_i(\gamma)   <  e^{-1} \ln(1+\sqrt{2})$ and $\ell_i(\gamma')   <  e^{-1} \ln(1+\sqrt{2})$, $i=1,2$.

            \bigskip

            In this case, $e_i = e_i' = c_0'$, $i=1,2$.
            This implies that $d_i$ and $d_i'$, $i=1,2$ are positive.
			
 \bigskip

			Case (b): $\ell_i(\gamma) \geq e^{-1} \ln(1+\sqrt{2})$ and $\ell_i(\gamma')   <  e^{-1} \ln(1+\sqrt{2})$, $i=1,2$.

            \bigskip

             In this case, $e_i = c_i$ and $e_i' = c_0'$, $i=1,2$.
            This implies that $d_i = 0$ and $d_i > 0$, $i=1,2$.
			
 \bigskip

			Case (c): $\ell_i(\gamma)   <  e^{-1} \ln(1+\sqrt{2})$ and $\ell_i(\gamma') \geq e^{-1} \ln(1+\sqrt{2})$, $i=1,2$.

             \bigskip

            Similar to Case (b), $e_i = c_0'$ and $e_i' = c_i'$, $i=1,2$.
            This implies that $d_i > 0$ and $d_i' = 0$, $i=1,2$.
			
			 \bigskip

Case (d): $\ell_i(\gamma) \geq e^{-1} \ln(1+\sqrt{2})$ and $\ell_i(\gamma') \geq e^{-1} \ln(1+\sqrt{2})$, $i=1,2$.

            \bigskip

             In this case, we have $e_i = c_i$, $e_i' = c_i'$ and $d_i = d_i' = 0$, $i=1,2$.
			
\bigskip

			These four cases are shown in Figure 9.
The ratio of the lengths of $\beta$ on $X_1$ and $X_2$ satisfies the following
			\begin{equation*}
				\cfrac{\ell_1(\beta)}{\ell_2(\beta)} = \cfrac{b_1}{b_2} = \cfrac{b_1'+d_1+d_1'}{b_2'+d_2+d_2'}
				\leq 3 \cdot \max\{\cfrac{b_1'}{b_2'},\ \cfrac{d_1}{b_2'+d_2},\ \cfrac{d_1'}{b_2'+d_2'}\} \ .
			\end{equation*}
			We will study each part on the right hand side of the above inequality.

			\begin{lemma}\label{lemma:C1_Pb_Final_2}
				There exists a positive constant $K_3'$ depending on $\epsilon_0$ such that
				\begin{equation}\label{eq:C1_Pb_Final_2}
					\cfrac{b_1'}{b_2'} \leq K_3' \cdot \max\{1,\cfrac{a_1}{a_2}\} \ .
				\end{equation}
			\end{lemma}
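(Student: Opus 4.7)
The plan is to adapt the proof of Lemma \ref{lemma:C1_Pb_Final} to the four-case situation in Figure 9. The key structural observation is that in every case (a)--(d), the quantities $e_i = \ell_i(C_i)/2$ and $e_i' = \ell_i(C_i')/2$ lie in an interval that depends only on $\epsilon_0$ and is bounded away from both $0$ and $+\infty$: indeed $e_i$ is either equal to $c_0'$, or else lies in $[\tfrac{1}{2}e^{-1}\ln(1+\sqrt{2}),\tfrac{\epsilon_0}{2}]$, and similarly for $e_i'$. Consequently $\sinh e_i$, $\sinh e_i'$, $\cosh e_i$ and $\cosh e_i'$ are all bounded above and below by positive constants depending only on $\epsilon_0$. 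This replaces the estimate $\sinh c_i\asymp c_i$ used in Section \ref{ProofTheorem1}, which was only available when $c_i$ was very small.

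First I would show that $|a_i-b_i'|\leq D_3$ for some constant $D_3$ depending only on $\epsilon_0$. Applying the right-angled hexagon formula \eqref{eq:RightAngledHexagon} to the half of the pair of pants bounded by $\alpha$, $C_i$ and $C_i'$, and using the collar relation $c_i\cosh d_i=c_0'$ whenever a collar is present (otherwise $d_i=0$ and $e_i=c_i$), I would follow the two-sided estimate pattern of Lemma \ref{lemma:difference}. The upper bound on $e^{a_i}/e^{b_i'}$ follows from bounding $\sinh e_i\sinh e_i'\cosh(b_i'+d_i+d_i')$ above by a multiple of $e^{b_i'}$, using $\cosh d_i\cdot c_i=c_0'$ (when there is a collar) and the uniform bound on $\sinh e_i$ (when there is none). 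The lower bound follows from the observation that $\cosh e_i\cosh e_i' \leq \cosh(c_1+c_1')\cosh e_i\cosh e_i'/\cosh(c_1+c_1')$ can be uniformly dominated by $\cosh\epsilon_0\cdot\cosh a_i$, just as in Lemma \ref{lemma:difference}.

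Next I would establish the uniform lower bound $b_i'\geq M_0'$. In cases (a), (b) and (c) where at least one collar is present, the argument of Lemma \ref{lemma:bi} applies unchanged to the portion of $\beta$ inside that collar. In case (d) there are no collars, so $b_i' = b_i = \ell_i(\beta)/2$; here the hexagon identity together with $a_i\geq 0$ gives
\begin{equation*}
\cosh b_i' \;\geq\; \frac{1+\cosh e_i\cosh e_i'}{\sinh e_i\sinh e_i'},
\end{equation*}
and the right-hand side is bounded strictly above $1$ by a quantity depending only on $\epsilon_0$, so $b_i'\geq M_0'$ for some $M_0'>0$ depending only on $\epsilon_0$.

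With the bounded-difference estimate $|a_i-b_i'|\leq D_3$ and the uniform lower bound $b_i'\geq M_0'$ in hand, the conclusion follows verbatim from the seven-case analysis at the end of the proof of Lemma \ref{lemma:C1_Pb_Final}, comparing $b_i'$ and $a_i$ against thresholds $M$ and $\epsilon_0$; this produces the constant $K_3'$ depending only on $\epsilon_0$. The main obstacle is case (d), where no annular collar is available and one must instead exploit the uniform positive lower bound on $\sinh e_i\sinh e_i'$ to force $b_i'$ to stay bounded away from zero; beyond this, the only new difficulty is the bookkeeping required to verify the bounded-difference estimate simultaneously in all four geometric configurations of Figure 9.
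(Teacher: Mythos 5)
Your proposal is correct and follows the paper's proof essentially step for step: the same case-by-case uniform lower bound on $b_i'$ (with case (d) handled via $\cosh b_i'\geq(\sinh(\epsilon_0/2))^{-2}+1$ coming from the hexagon identity and $c_i\leq\epsilon_0/2$), the same bounded-difference estimate $|a_i-b_i'|\leq D$ obtained from the hexagon identity combined with the collar relation in each of the four configurations, and the same reduction to the case analysis at the end of the proof of Lemma \ref{lemma:C1_Pb_Final}. The only slip is notational: the identity \eqref{eq:RightAngledHexagon} lives on the geodesic hexagon with sides $c_i$, $c_i'$, $a_i$, so the quantity to be pinched between multiples of $e^{b_i'}$ is $\sinh c_i\sinh c_i'\cosh(b_i'+d_i+d_i')$ rather than $\sinh e_i\sinh e_i'\cosh(b_i'+d_i+d_i')$ --- these differ exactly when a collar is present and $c_i$ is small, and the displayed version would fail there --- but since you explicitly invoke $c_i\cosh d_i=c_0'$ to cancel the $e^{d_i}$ growth, the intended computation is the paper's correct one.
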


			\begin{proof}
				We first show that there exists a positive lower bound (that may depend on $\epsilon_0$) for $b_i'$, for $i = 1,2$.
				Consider $b_1'$. We have to consider all four cases as illustrated in Figure 9.
						
				In Case (a),  by \eqref{eq:C1_Pb_1st},
					we have $b_1' \geq 8/\ln(1+\sqrt{2})$.
				
In Case (b) or Case (c),
					the proof of \eqref{eq:C1_Pb_1st} shown that $b_1' \geq 4/\ln(1+\sqrt{2})$.

In Case (d), $C = \gamma$ and $C' = \gamma'$.
					By \eqref{eq:RightAngledHexagon}, we have $$\cosh a_1 + \cosh c_1 \cosh c_1' = \sinh c_1 \sinh c_1' \cosh b_1'.$$
					Since $c_1 \leq \epsilon_0/2$ and $c_1' \leq \epsilon_0/2$, we have a lower bound for  $\cosh b_1'$:
					\begin{equation*}
						\begin{split}
							\cosh b_1' &= \cfrac{\cosh a_1 + \cosh c_1 \cosh c_1'}{\sinh c_1 \sinh c_1'}\\
							&= \cfrac{\cosh a_1}{\sinh c_1 \sinh c_1'} + \coth c_1 \coth c_1'
							\geq ({\sinh \cfrac{\epsilon_0}{2}})^{-2} + 1 \ .
						\end{split}
					\end{equation*}
                    It follows that $b_1' \geq \text{arcosh} \left((\sinh (\epsilon_0 /2)\right)^{-2}+1)$.
                   Using the same argument we have that $b_2' \geq \text{arcosh} ((\sinh (\epsilon_0 /2))^{-2}+1)$.
		Let $$M' = \min\{ 4/\ln(1+\sqrt{2}), \ \text{arcosh} ((\sinh (\epsilon_0 /2))^{-2}+1)\},$$ then
              we have
				\begin{equation}\label{eq:C1_Pb_1st_2}
					b_i' \geq M',\text{ for }i=1,2.
				\end{equation}

				Now we claim that the difference between $a_i$ and $b_i'$ is bounded from above in all the above four cases, for $i = 1, 2$.
                We only give the discussion on $X_1$. The discussion on $X_2$ is the same.
				
				Case (a) is handled by inequality \eqref{eq:C1_Pb_5nd}.

				In Case (b) or Case (c),	
					it is sufficient to consider Case (b). The discussion of Case $(c)$ works in the same way.
					As $\ell_1(\gamma) \geq e^{-1} \ln(1+\sqrt{2})$ and $d_1=0$, we have
					\begin{equation*}
						\begin{split}
							\sinh c_1 \sinh c_1' \cosh ( b_1'+d_1+d_1')
							&> \cfrac{1}{2} \cdot c_1 \cdot c_1' \cosh d_1' \cdot e^{b_1'}
							= \cfrac{1}{2} \cdot \cfrac{\ell_1(\gamma)}{2} \cdot c_0' \cdot e^{b_1'} \\
                            &\geq \cfrac{(\ln(1+\sqrt{2}))^2}{8 e} \cdot e^{b_1'} \ .
						\end{split}
					\end{equation*}
					As $c_1 < \epsilon_0/2$, $c_1' < \epsilon_0/2$, we have a positive constant $k_1$ (depending on $\epsilon_0$) that satisfies $\sinh c_1 \leq k_1 \cdot c_1$ and $\sinh c_1' \leq k_1 \cdot c_1'$.
					Then we have
					\begin{equation*}
						\begin{split}
							\sinh c_1 \sinh c_1' \cosh ( b_1'+d_1+d_1')
							&< k_1^2 \cdot c_1 \cdot c_1'\ \cosh d_1' \cdot e^{b_1'}\\
                            &\leq \cfrac{\epsilon_0 k_1^2 \ln(1+\sqrt{2})}{4 e} \cdot e^{b_1'} \ .
						\end{split}
					\end{equation*}
					Let $M_1 = \max \{ 8e/(\ln(1+\sqrt{2}))^2, \ \epsilon_0 k_1^2 \ln(1+\sqrt{2}) /(4e) \}$. Then we have proved that
 $$M_1^{-1} e^{b_1'} \leq \sinh c_1 \sinh c_1' \cosh (b_1'+d_1+d_1) \leq M_1 e^{b_1'}.$$
					Similarly to the proof of inequality \eqref{eq:C1_Pb_5nd}, we can show that there exists a positive constant $D_2$ depending on $\epsilon_0$ such that $|a_1 - b_1'| \leq D_2$.

				In Case (d), the assumption of this case implies that $d_1=0$ and $d_1'=0$.
					It follows that
					\begin{equation*}
						\begin{split}
							\sinh c_1 \sinh c_1' \cosh (b_1'+d_1+d_1')
							&> \cfrac{c_1 c_1'}{2} \cdot e^{b_1'}
                            \geq \cfrac{(\ln(1+\sqrt{2}))^2}{8 e^2} \cdot e^{b_1'}
						\end{split}
					\end{equation*}
					and
					\begin{equation*}
						\begin{split}
							\sinh c_1 \sinh c_1' \cosh (b_1'+d_1+d_1')
							&< K_1^2 c_1 c_1' e^{b_1'}
                            \leq \cfrac{K_1^2 \epsilon_0^2}{4} e^{b_1'}.
						\end{split}
					\end{equation*}
					Let $M_1 = \max \{ 8 e^2 / (\ln(1+\sqrt{2}))^2 ,\ K_1^2 \epsilon_0^2 / 4\}$.
					Again, similarly to the proof of inequality \eqref{eq:C1_Pb_5nd},  we can prove that  $|a_1 - b_1'| \leq D_3$, where the constant $D_3$ depends on
$\epsilon_0$.	
				
The same proof applies to $|a_2-b_2'|$. Then we have a positive constant $D$ depending on $\epsilon_0$ such that  				
				\begin{equation}\label{eq:C1_Pb_2nd_2}
					|a_i-b_i'| \leq D, i=1,2.
				\end{equation}

 Comparing $(\ref{eq:C1_Pb_1st_2})$ and $(\ref{eq:C1_Pb_2nd_2})$ with  $(\ref{eq:C1_Pb_1st})$ and $(\ref{eq:C1_Pb_5nd})$ in the proof of Lemma \ref{lemma:C1_Pb_Final}, we finish the proof.
			\end{proof}			

			Next we will consider the $d_1/(b_2'+d_2)$ part.
			\begin{lemma}\label{lemma:C1_Pd_Final_2}
				There exists a positive constant $K_3''$ depending on $\epsilon_0$ such that
				\begin{equation}\label{eq:C1_Pd_Final_2}
					\cfrac{d_1}{b_2'+d_2} \leq K_3'' \cdot \max\{1,\ \cfrac{c_2}{c_1}\} \ .
				\end{equation}
			\end{lemma}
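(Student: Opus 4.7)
The plan is to follow the skeleton of Lemma \ref{lemma:C1_Pd_Final}, but split into cases according to whether the boundary component $\gamma$ is short (length below $e^{-1}\ln(1+\sqrt{2})$) on each of $X_1$ and $X_2$. First, if $\ell_1(\gamma)\geq e^{-1}\ln(1+\sqrt{2})$, then by the construction of $C_1$ we have $d_1=0$ and the inequality holds trivially. So from now on assume $\ell_1(\gamma)<e^{-1}\ln(1+\sqrt{2})$, i.e.\ $c_1<c_0'/e$, which puts us in a genuine regular annulus with $e_1=c_0'$. Then $c_1\cosh d_1=c_0'$ together with \eqref{eq:Arcosh} gives exactly the same two-sided estimate as in the proof of Lemma \ref{lemma:C1_Pd_Final}:
$$\ln c_0'-\ln c_1\ \leq\ d_1\ \leq\ 2(\ln c_0'-\ln c_1).$$

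Now I would split into two sub-cases according to $\ell_2(\gamma)$. If $\ell_2(\gamma)<e^{-1}\ln(1+\sqrt{2})$, then $d_2>0$ and the same formula gives $d_2\geq \ln c_0'-\ln c_2$. Running the argument of Lemma \ref{lemma:C1_Pd_Final} verbatim (the case $c_2\leq c_1$ is immediate, and for $c_2>c_1$ one uses that $x\mapsto x^{-1}\ln x$ is decreasing on $[e,\infty)$) yields $d_1/d_2\leq 2\max\{1,c_2/c_1\}$, and since $b_2'+d_2\geq d_2$ this already implies the desired bound. If instead $\ell_2(\gamma)\geq e^{-1}\ln(1+\sqrt{2})$, then $d_2=0$, and from $c_2\geq c_0'/e>c_1$ we get $\max\{1,c_2/c_1\}=c_2/c_1$. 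By inequality \eqref{eq:C1_Pb_1st_2} established inside the proof of Lemma \ref{lemma:C1_Pb_Final_2} we have the uniform lower bound $b_2'\geq M'$. Applying the elementary inequality $\ln x\leq x$ to $x=c_0'/c_1\geq e$, one estimates
$$d_1\ \leq\ 2\ln(c_0'/c_1)\ \leq\ 2\cdot\frac{c_0'}{c_1}\ \leq\ 2e\cdot\frac{c_2}{c_1},$$
so that $d_1/b_2'\leq (2e/M')\cdot c_2/c_1$. Taking $K_3'':=\max\{2,\,2e/M'\}$ and combining the three cases finishes the proof.

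The main obstacle is the second sub-case, which has no direct analogue in Lemma \ref{lemma:C1_Pd_Final}: there is no $d_2$ against which to compare $d_1$, and the natural bound on $d_1$ is logarithmic in $1/c_1$ while the right-hand side $c_2/c_1$ is linear. The key observation that unlocks this is that the assumption $\ell_2(\gamma)\geq e^{-1}\ln(1+\sqrt{2})$ forces $c_2$ to be bounded below by an absolute positive constant, so $c_2/c_1$ itself grows like $1/c_1$; this lets the trivial inequality $\ln x\leq x$ convert the logarithmic estimate on $d_1$ into a linear estimate against $c_2/c_1$, at the cost of an extra constant factor absorbed into $K_3''$.
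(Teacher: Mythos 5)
Your proposal is correct and follows essentially the same route as the paper: both split on whether $\ell_i(\gamma)$ falls below $e^{-1}\ln(1+\sqrt{2})$, reduce the doubly-short case to Lemma \ref{lemma:C1_Pd_Final}, and in the mixed case use $d_2=0$, the lower bound $b_2'\geq M'$, the estimate $d_1\leq 2\ln(c_0'/c_1)\leq 2c_0'/c_1$, and the bound $c_2\geq c_0'/e$ to absorb everything into $c_2/c_1$. Your closing remark correctly identifies the same key observation the paper exploits, so no further comment is needed.
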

			
			\begin{proof}

By $\eqref{eq:RegularAnnulis}$, we have
				\begin{equation*}\label{eq:C1_Pd_1st_2}
					\ell_i(\gamma) \cdot \cosh d_i = \ell_i(C_i), \  i =1, 2.
				\end{equation*}
                Similarly, $$\ell_i(\gamma') \cdot \cosh d_i' = \ell_i(C_i'), \ i =1, 2.$$
						
	 			We consider the two cases depending on whether $\ell_1(\gamma) < e^{-1} \ln(1+\sqrt{2})$ or not.
	 			
\bigskip
	 			\begin{case}{1: $\ell_1(\gamma) < e^{-1} \ln(1+\sqrt{2})$. }
	 \bigskip			
                    By assumption, we have $\ell_1(C_1) = \ln (1+\sqrt{2})$.
                   As in the proof of Lemma \ref{lemma:C1_Pd_Final}, we have
                    \begin{equation}\label{eq:4-2}
                        \ln \ell_1(C_1) - \ln \ell_1(\gamma)
                        \leq d_1
                        \leq  2(\ln \ell_1(C_1) - \ln \ell_1(\gamma)).
                    \end{equation}
	 				If $\ell_2(\gamma) < e^{-1} \ln(1+\sqrt{2})$, the lemma follows from Lemma \ref{lemma:C1_Pd_Final}.
	 				If  $\ell_2(\gamma) \geq e^{-1} \ln(1+\sqrt{2})$, then  $d_2 = 0$.
	 					By \eqref{eq:4-2}, we have
	 					\begin{equation*}
	 						\begin{split}
	 							\cfrac{d_1}{b_2'+d_2} &\leq \cfrac{d_1}{M'}
	 							\leq \cfrac{2}{M'} \cdot \ln \cfrac{\ell_1(C_1)}{\ell_1(\gamma)}
	 							\leq \cfrac{2}{M'} \cdot \cfrac{\ell_1(C_1)}{\ell_1(\gamma)} \\
	 							&\leq \cfrac{2 \ln(1+\sqrt{2})}{M' \ell_2(\gamma)} \cdot \cfrac{\ell_2(\gamma)}{\ell_1(\gamma)}
                                \leq \cfrac{2  e}{M'} \cdot \cfrac{\ell_2(\gamma)}{\ell_1(\gamma)} \ .	
	 						\end{split}
	 					\end{equation*}
	 			
	 			\end{case}
	 			\bigskip

	 			\begin{case}{2: $\ell_1(\gamma) \geq e^{-1} \ln(1+\sqrt{2})$.}
\bigskip

	 				Since $d_1 = 0$ and $b_2' \geq M'$, we have $d_1/(b_2'+d_2) = 0$.
	 			\end{case}
	 			
\bigskip
                Let $K_3'' = \max \{ 2, 2e / M'\}$. Since $\cfrac{c_2}{c_1}=\cfrac{\ell_2(\gamma)}{\ell_1(\gamma)}$, we have
                $$\cfrac{d_1}{b_2'+d_2} \leq K_3'' \cdot \max\{1,\ \cfrac{c_2}{c_1}\}.$$
	 			This completes the proof.

	 		\end{proof}

We argue similarly for
\begin{lemma}\label{lemma:C1_Pd_Final_2*}
				There exists a positive constant $K_3''$ depending on $\epsilon_0$ such that
				\begin{equation}\label{eq:C1_Pd_Final_2}
					\cfrac{d_1'}{b_2'+d_2'} \leq K_3''' \cdot \max\{1,\ \cfrac{c_2'}{c_1'}\} \ .
				\end{equation}
			\end{lemma}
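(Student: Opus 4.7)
The plan is to mirror the proof of Lemma \ref{lemma:C1_Pd_Final_2} essentially verbatim, exchanging the roles of the pair $(\gamma, C_i, c_i, d_i)$ with the ``primed'' pair $(\gamma', C_i', c_i', d_i')$. The geometric setup in Section \ref{ProofTheorem2} is completely symmetric in the two boundary components on which the endpoints of $\beta$ lie, so every estimate used before has a direct analogue.

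The starting point is the regular-annulus identity \eqref{eq:RegularAnnulis} applied to the collar around $\gamma'$, namely $\ell_i(\gamma') \cdot \cosh d_i' = \ell_i(C_i')$ for $i=1,2$. I would then split into two cases according to the size of $\ell_1(\gamma')$. In the case $\ell_1(\gamma') \geq e^{-1}\ln(1+\sqrt{2})$, the construction forces $C_1' = \gamma'$ and therefore $d_1' = 0$, so the left-hand side vanishes and the inequality is trivial.

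In the case $\ell_1(\gamma') < e^{-1}\ln(1+\sqrt{2})$, we have $\ell_1(C_1') = \ln(1+\sqrt{2})$, and the same computation as in Lemma \ref{lemma:C1_Pd_Final} yields the two-sided bound
\begin{equation*}
\ln \ell_1(C_1') - \ln \ell_1(\gamma') \leq d_1' \leq 2\bigl(\ln \ell_1(C_1') - \ln \ell_1(\gamma')\bigr).
\end{equation*}
Now I would subdivide further: if $\ell_2(\gamma') < e^{-1}\ln(1+\sqrt{2})$, then an identical two-sided bound holds for $d_2'$, and Lemma \ref{lemma:C1_Pd*_Final} (or equivalently the same function-monotonicity argument used in Lemma \ref{lemma:C1_Pd_Final}) gives $d_1'/d_2' \leq 2\max\{1, c_2'/c_1'\}$; dropping the nonnegative $b_2'$ in the denominator only strengthens this. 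Otherwise $\ell_2(\gamma') \geq e^{-1}\ln(1+\sqrt{2})$, in which case $d_2' = 0$ and $b_2' \geq M'$ by \eqref{eq:C1_Pb_1st_2}, so I would estimate
\begin{equation*}
\cfrac{d_1'}{b_2'+d_2'} \leq \cfrac{2}{M'} \cdot \ln\cfrac{\ell_1(C_1')}{\ell_1(\gamma')} \leq \cfrac{2\,\ell_1(C_1')}{M'\,\ell_1(\gamma')} \leq \cfrac{2e}{M'} \cdot \cfrac{\ell_2(\gamma')}{\ell_1(\gamma')},
\end{equation*}
using $\ell_2(\gamma') \geq e^{-1}\ln(1+\sqrt{2}) = e^{-1}\ell_1(C_1')$ in the last step.

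Setting $K_3''' = \max\{2,\ 2e/M'\}$ and combining the cases yields the claim. There is no substantive obstacle here: the only thing to verify carefully is that the uniform lower bound $b_2' \geq M'$ established inside the proof of Lemma \ref{lemma:C1_Pb_Final_2} really is available across all four subcases of Figure 9, which it is by construction of $M'$. Everything else is a relabeling of the preceding argument.
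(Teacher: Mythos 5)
Your proposal is correct and is exactly what the paper intends: the paper gives no separate argument for this lemma (it simply says ``we argue similarly''), and your case analysis reproduces the proof of Lemma \ref{lemma:C1_Pd_Final_2} with $(\gamma, C_i, c_i, d_i)$ replaced by $(\gamma', C_i', c_i', d_i')$, using the same regular-annulus identity, the same two-sided logarithmic bound on $d_1'$, and the same lower bound $b_2'\geq M'$ from \eqref{eq:C1_Pb_1st_2}. The constant $K_3'''=\max\{2,\,2e/M'\}$ matches the paper's choice in the unprimed case.
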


			\begin{proposition}\label{lemma:Diff_Boundary_2}
              For any
essential arc $\beta \in \mathcal{B}(S)$ with endpoints lying on different boundary components $\gamma$ and $\gamma'$ of $S$, let $\alpha$ be the associated simple closed curve isotopic to the boundary of a regular neighborhood of $\beta \cup \gamma \cup \gamma'$.
				Then there exists a positive constant $K_3$ depending on $\epsilon_0$ such that the following inequality holds for any $X_1, X_2$ in the $\epsilon_0$-relative part of $\mathcal{T}(S)$:
				\begin{equation}\label{eq:Diff_Boundary_2}
					\begin{split}
						\cfrac{\ell_{X_1}(\beta)}{\ell_{X_2}(\beta)}
						&\leq K_3 \cdot \max \{ 1, \
						\cfrac{\ell_{X_1}(\alpha)}{\ell_{X_2}(\alpha)}, \
						\cfrac{\ell_{X_2}(\gamma)}{\ell_{X_1}(\gamma)}, \
						\cfrac{\ell_{X_2}(\gamma')}{\ell_{X_1}(\gamma')}\} \ .
					\end{split}
				\end{equation}
			\end{proposition}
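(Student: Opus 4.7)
The plan is to assemble the three preceding lemmas \ref{lemma:C1_Pb_Final_2}, \ref{lemma:C1_Pd_Final_2}, and \ref{lemma:C1_Pd_Final_2*} in the same spirit as Proposition \ref{lemma:Diff_Boundary}, but using the decomposition of $\beta$ adapted to arbitrary $\epsilon_0$. First I would recall that by construction $\ell_{X_i}(\beta) = b_i' + d_i + d_i'$ for $i = 1,2$, where $C_i$ and $C_i'$ split $\beta$ into three pieces, with the convention that $d_i = 0$ whenever $\ell_{X_i}(\gamma) \geq e^{-1}\ln(1+\sqrt{2})$ and similarly for $d_i'$ (compare the four cases (a)--(d) of Figure 9).

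Next I would establish the elementary mediant-type inequality
$$\cfrac{b_1' + d_1 + d_1'}{b_2' + d_2 + d_2'} \leq 3 \max \left\{ \cfrac{b_1'}{b_2'},\ \cfrac{d_1}{b_2' + d_2},\ \cfrac{d_1'}{b_2' + d_2'} \right\},$$
which follows by bounding each summand in the numerator via the corresponding ratio and then observing that $b_2' + (b_2' + d_2) + (b_2' + d_2') \leq 3(b_2' + d_2 + d_2')$. This step is clean because \eqref{eq:C1_Pb_1st_2} guarantees $b_i' \geq M' > 0$, so all denominators on the right are strictly positive. Then I would apply Lemma \ref{lemma:C1_Pb_Final_2} to bound the first ratio by $K_3' \max\{1, \ell_{X_1}(\alpha)/\ell_{X_2}(\alpha)\}$, Lemma \ref{lemma:C1_Pd_Final_2} to bound the second by $K_3'' \max\{1, \ell_{X_2}(\gamma)/\ell_{X_1}(\gamma)\}$, and Lemma \ref{lemma:C1_Pd_Final_2*} to bound the third by $K_3''' \max\{1, \ell_{X_2}(\gamma')/\ell_{X_1}(\gamma')\}$. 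Setting $K_3 = 3 \max\{K_3', K_3'', K_3'''\}$, which depends only on $\epsilon_0$ and not on the topology of $S$, would then yield \eqref{eq:Diff_Boundary_2}.

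Since all three ingredient lemmas are already in hand, I do not anticipate a substantive obstacle. The only delicate bookkeeping is to verify uniformity across the four geometric configurations of Figure 9: some of $d_1, d_1', d_2, d_2'$ vanish in cases (b)--(d), and one must check that both the mediant inequality and each lemma's conclusion remain valid in these degenerate situations, using the convention $0/0 = 1$ introduced in Example \ref{example1}. This is routine case-checking and should not pose a real difficulty.
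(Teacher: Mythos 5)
Your proposal is correct and follows essentially the same route as the paper: both reduce $\ell_{X_1}(\beta)/\ell_{X_2}(\beta)$ to $3\max\bigl\{b_1'/b_2',\ d_1/(b_2'+d_2),\ d_1'/(b_2'+d_2')\bigr\}$ and then invoke Lemmas \ref{lemma:C1_Pb_Final_2}, \ref{lemma:C1_Pd_Final_2} and \ref{lemma:C1_Pd_Final_2*} with $K_3 = 3\max\{K_3',K_3'',K_3'''\}$. Your explicit justification of the mediant-type inequality (via $3b_2'+d_2+d_2'\leq 3(b_2'+d_2+d_2')$ and the lower bound $b_i'\geq M'$) is a welcome detail that the paper merely asserts.
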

	 		\begin{proof}
	 			By Lemma \ref{lemma:C1_Pb_Final_2}, Lemma \ref{lemma:C1_Pd_Final_2} and Lemma \ref{lemma:C1_Pd_Final_2*}, the ratio $\ell_1(\beta)/\ell_2(\beta)$ satisfies
		 		\begin{equation*}
		 			\begin{split}
		 				\cfrac{\ell_1(\beta)}{\ell_2(\beta)}
		 				&\leq 3 \cdot \max \{ \cfrac{b_1'}{b_2'}, \ \cfrac{d_1}{b_2'+d_2}, \ \cfrac{d_1'}{b_2'+d_2'}\} \\
		 				&\leq 3 \cdot \max \{K_3'\cdot\max\{1,\ \cfrac{a_1}{a_2}\},\ K_3''\cdot\max\{1,\ \cfrac{c_2}{c_1}\},\ K_3'''\cdot\max\{1,\ \cfrac{c_2'}{c_1'}\} \} \\
		 				&\leq K_3 \cdot \max \{ 1,\ \cfrac{a_1}{a_2},\ \cfrac{c_2}{c_1},\ \cfrac{c_2'}{c_1'}\} \\
		 				&= K_3 \cdot \max\{ 1,\ \cfrac{\ell_1(\alpha)}{\ell_2(\alpha)},\ \cfrac{\ell_2(\gamma)}{\ell_1(\gamma)},\ \cfrac{\ell_2(\gamma')}{\ell_1(\gamma')}\} \ ,
		 			\end{split}
		 		\end{equation*}
		 		where $K_3 = 3 \cdot \max\{K_3',\ K_3''\  K_3'''\}$.
			\end{proof}

		\begin{figure}[ht]\label{fig:Case_2_Surface}
			\subfigure[Case $\ell_{X_i}(\gamma) < e^{-1} \ln(1+\sqrt{2})$]
			{
				\begin{tikzpicture}[scale=0.5]
					\draw		(0,-0.8)	node	{$\gamma$};
                    \draw       (-1.1,-0.1)    node{$\gamma_i''$};
                    \draw       (1.1,-0.1)    node{$\gamma_i'$};
					\draw		(-0.5,0)	arc	(180:360:	0.5	and	0.25);
					\draw[dashed](0.5,0)	arc	(0:180:	0.5	and	0.25);				
					\draw		(0,6)	node	{$C_i$};
					\draw		(-1,7)	arc	(180:360:	1	and	0.5);
					\draw[dashed](1,7)	arc	(0:180:	1	and	0.5);
					
					\draw		(-0.5,0)	..controls	(-0.5,1)	and	(-0.5,6.5)..	(-1,7);
					\draw		(0.5,0)	..controls	(0.5,1)	and	(0.5,6.5)..	(1,7);
					
					\draw		(-1,7)	..controls	(-1.5,8)	and	(-3,9)..	(-7,9);
					\draw		(1,7)	..controls	(1.75,8.5)	and	(5,9)..	(7,9);
					\draw		(7,11)	..controls	(3,9)		and	(-3,9)..	(-7,10);
					
					\draw		(-4,10)	node	{$\alpha_i'$};
					\draw		(-4,8.8)	..controls	(-3.8,8.8)	and	(-3.8,9.5)..	(-4,9.5);
					\draw	[dashed](-4,8.8)	..controls	(-4.2,8.8)	and	(-4.2,9.5)..	(-4,9.5);
					\draw		(4.2,10.3)	node	{$\alpha_i$};
					\draw		(4.2,8.7)	..controls	(4.5,8.8)	and	(4.4,9.8)..	(4,9.9);
					\draw	[dashed](4.2,8.7)	..controls	(3.9,8.6)	and	(3.6,9.5)..	(4,9.9);
					
					\draw		(1.5,4)	node	{$\beta^{A}_{i}$};
					\draw		(-1.5,4)	node	{${\beta'}^{A}_{i}$};
					\draw		(1.5,8.5)	node	{$\beta^{Q}_{i}$};
					\draw		(0,9.4)		..controls	(1.3,9.4)		and	(0.2,8)..	(0.2,-0.2);
					\draw	[dashed](0,9.4)		..controls	(-1.3,9.4)		and	(-0.2,8)..	(-0.2,0.2);
				\end{tikzpicture}
			}
			
			\subfigure[Case $\ell_{X_i}(\gamma) \geq e^{-1} \ln(1+\sqrt{2})$]
			{
				\begin{tikzpicture}[scale=0.5]
					\draw		(0,6)	node	{$C_i = \gamma$};
                    \draw       (-1.6,6.6)    node{$\gamma_i''$};
                    \draw       (1.6,6.6)    node{$\gamma_i'$};
					\draw		(-1,7)	arc	(180:360:	1	and	0.5);
					\draw[dashed](1,7)	arc	(0:180:	1	and	0.5);

					\draw		(-1,7)	..controls	(-1,8.5)	and	(-3,8.9)..	(-7,9);
					\draw		(1,7)	..controls	(1,8.5)	and	(5,8.9)..	(7,9);
					\draw		(7,11)	..controls	(3,9)		and	(-3,9)..	(-7,10);
					
					\draw		(-4,10)	node	{$\alpha_i'$};
					\draw		(-4,8.8)	..controls	(-3.8,8.8)	and	(-3.8,9.5)..	(-4,9.5);
					\draw[dashed]	(-4,8.8)	..controls	(-4.2,8.8)	and	(-4.2,9.5)..	(-4,9.5);
					\draw		(4.2,10.3)	node	{$\alpha_i$};
					\draw		(4.2,8.7)	..controls	(4.5,8.8)	and	(4.4,9.8)..	(4,9.9);
					\draw[dashed]	(4.2,8.7)	..controls	(3.9,8.6)	and	(3.6,10)..	(4,9.9);
					
					\draw		(1.9,8.8)	node	{$\beta^{Q}_{i} = \beta$};
					\draw		(0,9.35)		..controls	(1,9.35)		and	(0.6,8)..	(0.6,6.6);
					\draw[dashed]	(0,9.35)		..controls	(-1,9.35)		and	(-0.6,8.5)..	(-0.6,7.4);
				\end{tikzpicture}
			}
			\caption{\small{Examples of pair of pants on $X_i$ when $\gamma = \gamma'$ and $\epsilon_0 \geq e^{-1} \ln(1+\sqrt{2})$. }}
		\end{figure}	

\subsection{The case where $\gamma=\gamma'$.}
$\newline$
		
		
		For hyperbolic structures $X_i$,  $i=1,2$, on $S$, we choose $\alpha_i$, $\alpha_i'$, $\gamma_i'$ and $\gamma_i''$, $i=1,2$, as  in the beginning of subsection \ref{3.3}.
        We repeat the constructions as follows.
        Let $\alpha$ and $\alpha'$ be  the boundaries of the regular neighborhood of $\beta \cup \gamma$.  We
         may assume that $\ell_i(\alpha) \geq \ell_i(\alpha')$, for $i = 1,2$.
        As we show in Figure 10, the arc $\beta$ separates $\gamma$ into two sub-arcs $\gamma_i'$ and $\gamma_i''$, such that $\gamma_i' \bigcup \beta$ is isotopic to $\alpha$, for $i = 1,2$.

        Similar to the criterion given in the above subsection, we choose  closed curves $C_i$, $i=1,2$,  which are isotopic to $\gamma$ as follows.
        Denote $\ell_{X_i}$ by $\ell_i$, $i = 1, 2$.
        If $\ell_i(\gamma) \geq e^{-1} \ln(1+\sqrt{2})$,  then we let $C_i=\gamma$, for $i=1,2$, as we shown in (b) of Figure 10.
        Otherwise, we let $C_i$ be the inner boundary of a regular annulus around $\gamma$ with $\ell_i(C_i) = \epsilon_0'$, for $i = 1,2$, as we show in (a) of Figure 10  .

       Cutting along the three geodesic arcs which connect any two of the boundaries of the pair of pants,
       we obtain  two symmetric right-angled hexagons.
        We only need consider one of them.

        If $\ell_i(\gamma) < e^{-1} \ln(1+\sqrt{2})$, the part of $\beta$ in one of the hexagon is separated by $C_i$ into two sub-arcs, for $i = 1,2$.
        As we show in Figure 10 (b), let $b_i = \ell_i(\beta_i^Q)/2$ and $d_i = \ell_i(\beta_i^A) = \ell_i({\beta_i'}^A)$, for $i = 1,2$.

        If $\ell_i(\gamma) \geq e^{-1} \ln(1+\sqrt{2})$, since $C_i = \gamma$, we let $b_i = \ell_i(\beta) / 2$ and $d_i = 0$, for $i =1,2$.
        For sake of simplicity, let $c_i' = \ell_i(\gamma_i')$, $c_i''=\ell_i(\gamma_i'')$,
         $a_i=\ell_i(\alpha)/2$, $a_{i}'=\ell_i(\alpha')/2$ and $c_0' = \epsilon_0'/2 = \ln(1+\sqrt{2})/2$, for $i=1,2$.

        In either case,  $\ell_i(\beta) = b_i + d_i$,  $i=1,2$.
		It's easy to show that
        \begin{equation*}
            \begin{split}
                \cfrac{\ell_1(\beta)}{\ell_2(\beta)}
                = \cfrac{2 \cdot (b_1+d_1)}{2 \cdot (b_2+d_2)}
                \leq 2 \cdot \max \{ \cfrac{b_1}{b_2}, \ \cfrac{d_1}{b_2+d_2} \} \ .
            \end{split}
        \end{equation*}
We will study the $b_1/b_2$ part and the $d_2/(b_2+d_2)$ part.
		
		We first consider the $b_1/b_2$ part.
		
		\begin{lemma}\label{lemma:C2_Pb_Final_2}
			There exists a positive constant $K_4'$ depending on $\epsilon_0$ such that
			\begin{equation}\label{eq:C2_Pb_Final_2}
				\cfrac{b_1}{b_2} \leq K_4' \cdot \max\{1,\ \cfrac{\ell_1(\alpha)}{\ell_2(\alpha)}\} \ .
			\end{equation}
		\end{lemma}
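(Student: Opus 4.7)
The plan is to mirror the proof of Lemma \ref{lemma:C2_Pb_Final}, modifying it to accommodate the four subcases determined by whether each $\ell_i(\gamma)$ lies below or above the threshold $e^{-1}\ln(1+\sqrt{2})$, for $i=1,2$. Following the same two-step template used there (and adapted in Lemma \ref{lemma:C1_Pb_Final_2} for the case $\gamma\neq\gamma'$), I would first establish a uniform positive lower bound $M''$ for $b_i$ depending only on $\epsilon_0$, then bound $|b_i - a_i|$ by a constant $D$ depending only on $\epsilon_0$. Once both are in hand, the multi-case analysis from the proof of Lemma \ref{lemma:C1_Pb_Final} (separating according to whether $b_i\geq M$ and whether $a_i>\epsilon_0$) yields the desired constant $K_4'$.

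For the uniform lower bound on $b_i$: when $\ell_i(\gamma)< e^{-1}\ln(1+\sqrt{2})$, the argument producing $b_i\geq M_0/2$ in \eqref{eq:C2_Pb_2nd} transfers verbatim, since the same regular-annulus construction applies. When $\ell_i(\gamma)\geq e^{-1}\ln(1+\sqrt{2})$, we have $d_i=0$ by construction of $C_i=\gamma$, so the right-angled pentagon formula \eqref{eq:RightAngledPentagon} reduces to $\cosh a_i=\sinh b_i\cdot\sinh c_i'$, and using $c_i'\leq \epsilon_0/2$ together with $\cosh a_i\geq 1$ gives $\sinh b_i\geq 1/\sinh(\epsilon_0/2)$, hence a uniform lower bound on $b_i$. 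Taking the minimum of the two produces $M''$.

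For the estimate $|b_i-a_i|\leq D$: in the regime $\ell_i(\gamma)< e^{-1}\ln(1+\sqrt{2})$ the chain of estimates \eqref{eq:C2_Pb_4th}, \eqref{eq:C2_Pb_3rd}, \eqref{eq:C2_Pb_5th} used in Lemma \ref{lemma:C2_Pb_Final} applies directly. In the complementary regime, the observation that $a_i\geq a_i'$ (secured by the ordering $\ell_i(\alpha)\geq \ell_i(\alpha')$ and the pentagon formula, exactly as in Lemma \ref{lemma:C2_Pb_Final}) forces $c_i'\geq c_i''$, hence $c_i'\in[\ell_i(\gamma)/4,\ell_i(\gamma)/2]\subseteq[e^{-1}\ln(1+\sqrt{2})/4,\,\epsilon_0/2]$. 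This pins $\sinh c_i'$ between two positive constants depending only on $\epsilon_0$, and the pentagon identity $\cosh a_i=\sinh b_i\cdot\sinh c_i'$ then yields $e^{a_i}\asymp e^{b_i}$ with multiplicative constant depending only on $\epsilon_0$, giving the required $|a_i-b_i|\leq D$.

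With these analogues of \eqref{eq:C2_Pb_2nd} and \eqref{eq:C2_Pb_5th} established uniformly across the four subcases, the case-by-case argument at the end of the proof of Lemma \ref{lemma:C1_Pb_Final} produces $b_1/b_2\leq K_4'\cdot\max\{1,a_1/a_2\}$. The possibility that $\ell_2(\alpha)<\ell_2(\alpha')$ is handled exactly as at the end of the proof of Lemma \ref{lemma:C2_Pb_Final}, substituting $\ell_2(\alpha')$ for $\ell_2(\alpha)$ and then using $\ell_1(\alpha)/\ell_2(\alpha')\leq\ell_1(\alpha)/\ell_2(\alpha)$. The principal obstacle I anticipate is the mismatched configuration where $\ell_1(\gamma)$ and $\ell_2(\gamma)$ lie on opposite sides of $e^{-1}\ln(1+\sqrt{2})$, so that $d_1$ and $d_2$ are not both zero and the two pentagon computations are not strictly parallel; the point is that the uniform control of $b_i$ from below and of $|b_i-a_i|$ from above, holding identically in all four subcases, absorbs this mismatch into a constant depending only on $\epsilon_0$.
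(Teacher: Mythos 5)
Your proposal is correct and follows essentially the same route as the paper: a uniform lower bound on $b_i$ (via \eqref{eq:C2_Pb_2nd} in the small-$\ell_i(\gamma)$ regime and via the pentagon identity $\cosh a_i=\sinh b_i\sinh c_i'$ with $c_i'\le\epsilon_0/2$ in the other regime), followed by $|a_i-b_i|\le D$ obtained by pinning $c_i'$ in $[\ell_i(\gamma)/4,\ell_i(\gamma)/2]$ and hence $\sinh c_i'$ between constants depending on $\epsilon_0$, and then the case analysis from Lemma \ref{lemma:C1_Pb_Final}. Your observation that the two bounds are established for each $i$ separately, so that mismatched subcases cause no difficulty, is exactly how the paper's argument absorbs that issue as well.
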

		
		\begin{proof}
			As in the proof of \eqref{eq:C2_Pb_1st}, we have
			\begin{equation*}
				\cfrac{\ell_i(\gamma)}{4} \leq c_i' \leq \cfrac{\ell_i(\gamma)}{2},\  i=1,\ 2\ .
			\end{equation*}
			We will show that there exists a positive lower bound for $b_i$, $i = 1,2$.

 Consider $b_1$  first. If $l_{1} (\gamma)<e^{-1} \ln (1+ \sqrt{2})$, then $b_1\geq  4/{\ln(1+\sqrt{2})}$, see \eqref{eq:C2_Pb_2nd}.
Now we suppose that $l_{1} (\gamma) \geq e^{-1} \ln (1+ \sqrt{2})$. By \eqref{eq:RightAngledPentagon} (we refer to Figure 8), we have $$\sinh b_1 \sinh c_1' = \cosh a_{1}.$$
			And the inequality $c_1' \leq \ell_1(\gamma)/2 \leq \epsilon_0/2$ implies that
			\begin{equation*}
				\sinh b_1 = \cfrac{\cosh a_{1}}{\sinh c_1'} \geq \cfrac{1}{\sinh (\epsilon_0/2)} \ .
			\end{equation*}
			Therefore $b_1 \geq \text{arsinh}((\sinh (\epsilon_0 /2))^{-1})$.

			Let $M_0' = \max \{ 4/\ln(1+\sqrt{2}) , \text{arsinh} ((\sinh (\epsilon_0 / 2))^{-1})\}$. Then we have $b_1 \geq M_0'$. The
same argument implies $b_2 \geq M_0'$.
Thus we have
			\begin{equation}\label{eq:C2_Pb_2nd_2}
				b_i \geq M_0',\  i=1,\ 2\ .
			\end{equation}
			
			Since $b_i+d_i \geq M_0'$ and $c_i' < \epsilon_0/2$, $i=1,2$, we have $k_2>0$ (depending on $\epsilon_0$) such that
			\begin{equation*}\label{eq:C2_Pb_3rd_2}
				k_2^{-1} \cdot e^{b_i+d_i} \leq \sinh (b_i+d_i) \leq \cfrac{1}{2} \cdot e^{b_i+d_i}, \ i=1,2,
			\end{equation*}
			and
			\begin{equation*}\label{eq:C2_Pb_4th_2}
				c_i' \leq \sinh c_i' \leq k_2 \cdot c_i' \  ,\  i=1,2.
			\end{equation*}
		
			Similarly to the proof of \eqref{eq:C2_Pb_5th}, we will show that the difference between $b_i$ and $a_{i}$, $i=1,2$, is bounded from above.
We first study the difference between $b_1$ and $a_{1}$. There are two cases depending on whether $\ell_1(\gamma) < e^{-1} \ln(1+\sqrt{2})$ or not.

		\bigskip

			\begin{case}{(a): $\ell_1(\gamma) < e^{-1} \ln(1+\sqrt{2})$. }

\bigskip
				For this case, by the same argument as in the proof of \eqref{eq:C2_Pb_5th}, we have $|b_1-a_{1}|<D_2$.
			\end{case}

\bigskip
			\begin{case}{(b): $\ell_1(\gamma) \geq e^{-1} \ln(1+\sqrt{2})$.}

\bigskip
				In this case, since $d_1 = 0$, as in the proof of \eqref{eq:C2_Pb_5th}, we have
				\begin{equation*}
					e^{a_{1}} \geq k_2^{-1} c_1' e^{b_1}
                    \geq \cfrac{k_2^{-1}}{4} \ell_1(\gamma) e^{b_1}
                    \geq \cfrac{k_2^{-1} \ln(1+\sqrt{2})}{4 e} \cdot e^{b_1}
				\end{equation*}
				and
				\begin{equation*}
					e^{a_{1}} \leq k_2 c_1' e^{b_1}
                    \leq \cfrac{k_2 \epsilon_0}{2} \cdot e^{b_1} \ .
				\end{equation*}
				Let $D_2'=\max\{| \ln (k_2^{-1} \ln(1+\sqrt{2})) - \ln (4e)|,\ |\ln(k_2 \epsilon_0) - \ln 2|\}$.
				We have $|b_1-a_{1}| < D_2'$.
			\end{case}

The same proof applies to $a_2-b_2$ .
			Thus  we have a positive constant $D_2'$ depending on $\epsilon_0$ such that
			\begin{equation}\label{eq:C2_Pb_5th_2}
				|b_i-a_{i}|<D_2' \text{ , for } i=1,2.
			\end{equation}

The rest of the proof of  this lemma is identical to the proof of Lemma \ref{lemma:C2_Pb_Final} after the inequality \eqref{eq:C2_Pb_5th}. We omit the details.
\end{proof}

		The next lemma is the discussion for the $d_1/(b_2+d_2)$ part.
		
		\begin{lemma}\label{lemma:C2_Pd_Final_2}
			There exists a positive constant $K_4''$ depending on $\epsilon_0$ such that
			\begin{equation}\label{eq:C2_Pd_Final_2}
				\cfrac{d_1}{b_2+d_2} \leq K_4'' \cdot \max \{ 1, \ \cfrac{\ell_2(\gamma)}{\ell_1(\gamma)} \}.
			\end{equation}
		\end{lemma}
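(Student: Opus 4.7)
The plan is to mirror the strategy of Lemma \ref{lemma:C1_Pd_Final_2}, adapted to the current setting where $\gamma=\gamma'$ and $\epsilon_0 \geq e^{-1} \ln(1+\sqrt{2})$. The key ingredients are the bound on $d_i$ coming from the regular annulus formula \eqref{eq:RegularAnnulis}, exactly as in the proof of Lemma \ref{lemma:C2_Pd_Final}, together with the lower bound $b_i \geq M_0'$ from \eqref{eq:C2_Pb_2nd_2} which was established while proving the previous lemma.

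First I would split into the two cases according to whether $\ell_1(\gamma) < e^{-1}\ln(1+\sqrt{2})$ or $\ell_1(\gamma) \geq e^{-1}\ln(1+\sqrt{2})$. In the latter case, by the choice of $C_1$ we have $C_1 = \gamma$, so $d_1 = 0$ and the inequality is trivial. In the former case, $\ell_1(C_1) = \epsilon_0'=\ln(1+\sqrt 2)$, and the same computation as in the proof of Lemma \ref{lemma:C2_Pd_Final} (using \eqref{eq:RegularAnnulis} and \eqref{eq:Arcosh}) yields
\begin{equation*}
  \ln\epsilon_0' - \ln \ell_1(\gamma)\;\leq\; d_1 \;\leq\; 2(\ln\epsilon_0' - \ln\ell_1(\gamma)).
\end{equation*}

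Next, within this first case, I would subdivide according to $\ell_2(\gamma)$. If $\ell_2(\gamma) \geq e^{-1}\ln(1+\sqrt{2})$, then $d_2 = 0$ and $b_2+d_2 = b_2 \geq M_0'$, so
\begin{equation*}
  \cfrac{d_1}{b_2+d_2}\;\leq\;\cfrac{2(\ln\epsilon_0' - \ln\ell_1(\gamma))}{M_0'}\;\leq\;\cfrac{2\epsilon_0'}{M_0'\,\ell_1(\gamma)}\;\leq\;\cfrac{2e}{M_0'}\cdot\cfrac{\ell_2(\gamma)}{\ell_1(\gamma)},
\end{equation*}
where the last inequality uses $\ell_2(\gamma)\geq e^{-1}\epsilon_0'$. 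If instead $\ell_2(\gamma) < e^{-1}\ln(1+\sqrt{2})$, then $d_2>0$ and $\ln\epsilon_0' - \ln\ell_2(\gamma)\leq d_2\leq 2(\ln\epsilon_0' - \ln\ell_2(\gamma))$, so dropping $b_2$ from the denominator and invoking the monotonicity argument from Lemma \ref{lemma:C1_Pd_Final} (via the function $x\mapsto x^{-1}\ln x$ on $[e,\infty)$) gives $d_1/d_2\leq 2\max\{1,\ell_2(\gamma)/\ell_1(\gamma)\}$, which is what we need.

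Combining all subcases, the conclusion follows with $K_4'' = \max\{2,\ 2e/M_0'\}$, a constant depending only on $\epsilon_0$. The only place that requires attention is the subcase where $d_2 = 0$: there, the bound on $d_1$ alone is insufficient and we must convert the logarithmic quantity $\ln\epsilon_0' - \ln\ell_1(\gamma)$ into the multiplicative ratio $\ell_2(\gamma)/\ell_1(\gamma)$ using the hypothesis $\ell_2(\gamma)\geq e^{-1}\epsilon_0'$. This is the one step where the $\epsilon_0\geq e^{-1}\ln(1+\sqrt 2)$ assumption genuinely interacts with the argument; everywhere else the reasoning is a direct transcription of the $\gamma\neq\gamma'$ case.
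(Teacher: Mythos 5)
Your proof is correct and follows essentially the same route as the paper: the same case split on whether $\ell_1(\gamma)$ and $\ell_2(\gamma)$ are below $e^{-1}\ln(1+\sqrt{2})$, the same logarithmic bounds on $d_1$ from \eqref{eq:RegularAnnulis}, the reduction to Lemma \ref{lemma:C2_Pd_Final} when both $d_i>0$, the lower bound $b_2\geq M_0'$ when $d_2=0$, and the same constant $K_4''=\max\{2,\,2e/M_0'\}$. In fact your write-up is slightly cleaner than the paper's, which inconsistently writes $M_0$ and $M''$ where $M_0'$ is meant.
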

		
		\begin{proof}	We need to consider the two cases depending on whether $\ell_1(\gamma) < e^{-1} \ln(1+\sqrt{2})$ or not.
		
\bigskip
			\begin{case}{(a): $\ell_1(\gamma) < e^{-1} \ln(1+\sqrt{2})$. }
\bigskip

If, moreover,  $\ell_2(\gamma) < e^{-1} \ln(1+\sqrt{2})$, then
					by Lemma \ref{lemma:C2_Pd_Final}, we have
					\begin{equation}\label{eq:5-2}
						\cfrac{d_1}{b_2+d_2} \leq \cfrac{d_1}{d_2}
						\leq 2 \max\{1,\ \cfrac{\ell_2(\gamma)}{\ell_1(\gamma)}\}.
					\end{equation}
				Otherwise, $\ell_2(\gamma) \geq e^{-1} \ln(1+\sqrt{2})$.
					As $d_2 = 0$,  by \eqref{eq:4-2}, we have
					\begin{equation*}
						\begin{split}
							\cfrac{d_1}{b_2+d_2} = \cfrac{d_1}{b_2}
                            \leq \cfrac{2}{M_0} \ln \cfrac{\ell_1(C)}{\ell_1(\gamma)}.
						\end{split}
					\end{equation*}
                    Since $\ell_1(C)\geq \ell_1(\gamma)$, we have $$\ln \cfrac{\ell_1(C)}{\ell_1(\gamma)} \leq  \cfrac{\ell_1(C)}{\ell_1(\gamma)}.$$
                    It follows that
                    \begin{equation*}
                        \begin{split}
                            \cfrac{d_1}{b_2+d_2}
                            &\leq \cfrac{2}{M''} \cfrac{\ell_1(C)}{\ell_1(\gamma)}
                            = \cfrac{2  \ln(1+\sqrt{2})}{M'' \ell_2(\gamma)} \cfrac{\ell_2(\gamma)}{\ell_1(\gamma)}
                            =\cfrac{2 e }{M''} \cdot \cfrac{\ell_2(\gamma)}{\ell_1(\gamma)}.
                        \end{split}
                    \end{equation*}

			\end{case}

			\bigskip

			\begin{case}{(b): $\ell_1(\gamma) \geq e^{-1} \ln(1+\sqrt{2})$.}

\bigskip
				By assumption, $d_1=0$.
				It follows that $$d_1/(b_2+d_2) =0.$$
			\end{case}

Let $K_4'' = \max \{2, 2e / M_0'\}$. We are done.
		\end{proof}
		
		\begin{proposition}\label{lemma:Same_Boundary_2}
			Let $X_1,X_2$ be any hyperbolic metrics in the
$\epsilon_0$-relative part of $\mathcal{T}(S)$.
            For  any essential arc $\beta \in \mathcal{B}(S)$ with endpoints lying on the same boundary component $\gamma$ of $S$,
            let $\alpha$ and $\alpha'$ be the associated simple closed curves homotopic the  boundaries of a regular  neighborhood of
            $\beta \cup \gamma$.
			Then there exists a positive constant $K_4$ depending on $\epsilon_0$ such that

			\begin{equation}\label{eq:Same_Boundary_2}
				\begin{split}
					\cfrac{\ell_{X_1}(\beta)}{\ell_{X_2}(\beta)}
					&\leq K_4 \cdot \max \{ 1, \
					\cfrac{\ell_{X_1}(\alpha)}{\ell_{X_2}(\alpha)}, \
\cfrac{\ell_{X_1}(\alpha')}{\ell_{X_2}(\alpha')}, \
					\cfrac{\ell_{X_2}(\gamma)}{\ell_{X_1}(\gamma)} \}.
				\end{split}
			\end{equation}
		\end{proposition}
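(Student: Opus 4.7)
The plan is to mirror the proof of Proposition \ref{lemma:Same_Boundary} by combining the two preceding lemmas (Lemma \ref{lemma:C2_Pb_Final_2} and Lemma \ref{lemma:C2_Pd_Final_2}) with the additive decomposition of $\ell_i(\beta)$ into a middle part and a boundary-collar part. Concretely, with the notation fixed just before the statement, we have $\ell_i(\beta) = 2(b_i + d_i)$ for $i=1,2$ (where $d_i = 0$ exactly when $\ell_i(\gamma) \geq e^{-1}\ln(1+\sqrt{2})$). The elementary observation that $(x_1+y_1)/(x_2+y_2) \leq \max\{x_1/x_2,\, y_1/y_2\}$ applied after bounding $y_2 \leq b_2 + d_2$ gives
\begin{equation*}
\cfrac{\ell_1(\beta)}{\ell_2(\beta)} = \cfrac{2(b_1+d_1)}{2(b_2+d_2)} \leq 2 \cdot \max\Big\{\cfrac{b_1}{b_2},\ \cfrac{d_1}{b_2+d_2}\Big\}.
\end{equation*}

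First I would apply Lemma \ref{lemma:C2_Pb_Final_2} under the convention (adopted in Remark \ref{remark:alpha}) that $\ell_{X_1}(\alpha)\geq \ell_{X_1}(\alpha')$, obtaining $b_1/b_2 \leq K_4'\max\{1,\ell_1(\alpha)/\ell_2(\alpha)\}$. Then I would apply Lemma \ref{lemma:C2_Pd_Final_2} to obtain $d_1/(b_2+d_2) \leq K_4''\max\{1,\ell_2(\gamma)/\ell_1(\gamma)\}$. Combining these with the decomposition above yields
\begin{equation*}
\cfrac{\ell_1(\beta)}{\ell_2(\beta)} \leq 2\max\{K_4',K_4''\}\cdot \max\Big\{1,\ \cfrac{\ell_1(\alpha)}{\ell_2(\alpha)},\ \cfrac{\ell_2(\gamma)}{\ell_1(\gamma)}\Big\},
\end{equation*}
so setting $K_4 = 2\max\{K_4',K_4''\}$ (which depends only on $\epsilon_0$) gives the desired bound in this sub-case.

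The one remaining subtlety is the sub-case $\ell_{X_2}(\alpha) < \ell_{X_2}(\alpha')$, i.e. the ordering of $\alpha,\alpha'$ by length on $X_2$ disagrees with that on $X_1$. This is exactly the point handled at the very end of the proof of Lemma \ref{lemma:C2_Pb_Final}: the geometric reasoning there produces instead a bound of the form $b_1/b_2 \leq K_4' \max\{1, \ell_1(\alpha)/\ell_2(\alpha')\}$, and since $\ell_1(\alpha)/\ell_2(\alpha')$ is then dominated by $\max\{\ell_1(\alpha)/\ell_2(\alpha),\ell_1(\alpha')/\ell_2(\alpha')\}$ (one of the two factors must be at least $\ell_1(\alpha)/\ell_2(\alpha')$), the right-hand side of \eqref{eq:Same_Boundary_2} still controls it. This is why the statement must include both $\ell(\alpha)$ and $\ell(\alpha')$ ratios rather than just the one for the longer curve.

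The main potential obstacle is purely notational rather than substantive: one must verify that the auxiliary constants $M_0'$ and the lower bounds on $b_i$ (which enter Lemma \ref{lemma:C2_Pd_Final_2} through the denominator $b_2+d_2$) are indeed uniform across both regimes $\ell_i(\gamma) < e^{-1}\ln(1+\sqrt{2})$ and $\ell_i(\gamma) \geq e^{-1}\ln(1+\sqrt{2})$, so that the final constant $K_4$ depends only on $\epsilon_0$ and not on the topology of $S$ or on which of the four combinatorial cases (from the two possible regimes of $\ell_1(\gamma),\ell_2(\gamma)$) one is in. This is already ensured by the uniform lower bound \eqref{eq:C2_Pb_2nd_2} and the construction of $M_0'$ in the proof of Lemma \ref{lemma:C2_Pb_Final_2}, so the assembly into Proposition \ref{lemma:Same_Boundary_2} is essentially bookkeeping once the two lemmas are in hand.
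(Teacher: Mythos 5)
Your proposal is correct and follows exactly the paper's route: the paper proves this proposition simply by combining the decomposition $\ell_1(\beta)/\ell_2(\beta)\leq 2\max\{b_1/b_2,\ d_1/(b_2+d_2)\}$ with Lemma \ref{lemma:C2_Pb_Final_2} and Lemma \ref{lemma:C2_Pd_Final_2}, which is what you do. Your extra remarks --- that the denominator $b_2+d_2$ is needed because $d_2$ may vanish, and that the case $\ell_{X_2}(\alpha)<\ell_{X_2}(\alpha')$ is absorbed via $\ell_1(\alpha)/\ell_2(\alpha')\leq\max\{\ell_1(\alpha)/\ell_2(\alpha),\ \ell_1(\alpha')/\ell_2(\alpha')\}$ inside the proof of the $b$-lemma --- are consistent with how the paper handles these points.
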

		
		\begin{proof}
			This is a corollary of  \eqref{eq:C2_Pb_Final_2} and \eqref{eq:C2_Pd_Final_2}.
		\end{proof}

\subsection{Conclusion}	
By Proposition \ref{lemma:Diff_Boundary_2} and Proposition \ref{lemma:Same_Boundary_2}, the same proof as that of Corollary \ref{lemma:key_1}
proves Theorem \ref{lemma:key}.
Theorem \ref{lemma:key} implies Theorem \ref{lemma:first}.

Recall that in the proof of Theorem \ref{lemma:first}, we just use elementary hyperbolic geometry.
However, for example, the inequality $(\ref{eq:Diff_Boundary})$ we have shown is not obvious.
Note that we only assume that the hyperbolic surfaces belong to the $\epsilon_0$-relative part of $\mathcal{T}(S)$
(not the thick part), thus the geodesic arcs we consider may cross
 long narrow cylinders or twist a lot.
 Thus it is difficult to control the lengths of arcs by that of simple closed curves.
What we have done is to show that the ratios can be controlled uniformly.

\begin{example}\label{example:Nielsen}
We use the Nielsen extension  of Riemann surfaces with boundary \cite{Bers, Papado-Th2010}
to  show that inequality \eqref{eq:couter} fails on
the whole $\epsilon_0$-relative part of $\mathcal{T}(S)$.

Let $X_0$ be any given hyperbolic metric on $S$. We can add each geodesic boundary component of
$X_0$  with an infinite funnel such that $X_0$ becomes the convex core of a Riemann surface
$R=\mathbb{H}^2/\Gamma$, where $\Gamma$ is a Fuchsian group of the second kind. By taking the double
of $R$, we obtain a Riemann surface $R^d$ without boundary. There is a unique hyperbolic metric in
the conformal class of $R^d$ and its restriction on $R$ defines a new hyperbolic metric $X_1$ on $S$
with geodesic boundary. We call $X_1$ the \emph{Nielsen extension} of $X_0$.

We may view $X_0$ as a conformal embedded subsurface of $X_1$. By the Schwarz Lemma, the Nielsen extension
decreases the hyperbolic metric on $X_0$. As a result, we have

$$\sup_{\alpha\in \mathcal{C}(S)}\{\frac{\ell_{X_1}(\alpha)}{\ell_{X_0}(\alpha)}\}\leq 1.$$

A theorem of Halpern \cite{H} shows that,
if $\alpha$ is a boundary curve of $X_0$ with length $l$, then the length of the corresponding boundary curve of the Nielsen extension
$X_1$ is less than $\frac{l}{2}$. In particular, if we define by $X_{n+1}$ be the Nielsen extension of $X_n$,
then $\ell_{X_n}(\alpha)\to 0$ for any boundary curve $\alpha$. Combined with the Collar Lemma,
we have
$$\sup_{\alpha\in \mathcal{C}(S)\cup \mathcal{B}(S)}\{\frac{\ell_{X_n}(\alpha)}{\ell_{X_0}(\alpha)}\}\to \infty$$
while
$$\sup_{\alpha\in \mathcal{C}(S)}\{\frac{\ell_{X_n}(\alpha)}{\ell_{X_0}(\alpha)}\}\leq 1.$$
Note that the sequence $(X_n)$ we constructed lies in some  $\epsilon_0$-relative part of $\mathcal{T}(S)$,
but not in any $\epsilon$-thick $\epsilon_0$-relative part.

On the other hand,  the ``$\epsilon_0$-relative" upper boundedness assumption on lengths of the boundary curves is necessary
for both inequality \eqref{eq:couter} and Theorem \ref{lemma:key}, see Example 3.8 in \cite{LPST}.
\end{example}

        \section{Applications and further study}
           \subsection{Moduli space.}
           Let $\text{Mod}(S)$ be the  modular group (or the mapping class group) of $S$.
           Recall that $\mathrm{Mod}(S)$ is the group of homotopy classes
            of orientation-preserving homeomorphism of $S$.
 $\text{Mod}(S)$ acts on the Teichm\"uller space $\mathcal{T}(S)$ by switching the markings.
           Moreover, the action is  properly
discontinuous and by isometries (here we endow $\mathcal{T}(S)$ with  the length spectrum metric or the arc-length spectrum metric).
            The moduli space of $S$, denoted by $\mathcal{M}(S)$, is the quotient space $$\mathcal{M}(S) = \mathcal{T}(S) / \text{Mod}(S).$$
            We have the natural projective map $\pi : \mathcal{T}(S) \to \mathcal{M}(S)$.

            For any fixed positive number $\epsilon_0$, the subset of $\mathcal{M}(S)$ consisting of hyperbolic structures with
             lengths of boundary components bounded above by $\epsilon_0$ is called the \emph{$\epsilon_0$-relative part} of $\mathcal{M}(S)$.

            The metric $d$ on $\mathcal{T}(S)$ induces a metric $d^{M}$ on $\mathcal{M}(S)$ by letting
            $$
            d^{M}(\tau_1, \tau_2) = \inf_{X_i \in \pi^{-1}(\tau_i)} d(X_1,X_2).
            $$
            Similarly, we have corresponding metrics $\bar{d}^{M}$, $\delta_L^{M}$ and $d_L^{M}$ on $\mathcal{M}(S)$ which are induced by $\bar{d}$, $\delta_L$ and $d_L$ on $\mathcal{T}(S)$, respectively.

            The following result is a direct corollary of Theorem \ref{lemma:first}.
            \begin{corollary}\label{cor:moduli_first}
                Given $\epsilon_0 > 0$.
                Let $d_L^{(M)}$ and $\delta_L^{(M)}$ be the length spectrum metric and the arc-length spectrum metric on $\mathcal{M}(S)$.
                For any $\tau_1$, $\tau_2$ in the $\epsilon_0$-relative part of $\mathcal{M}(S)$, we have
                \begin{equation}\label{eq:moduli_first}
                    d_L^{M}(\tau_1,\tau_2) \leq \delta_L^{M}(\tau_1,\tau_2) \leq d_L^{M}(\tau_1,\tau_2) + C,
                \end{equation}
                where $C$ is a positive constant depending on $\epsilon_0$.
            \end{corollary}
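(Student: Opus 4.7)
The plan is to reduce the statement on $\mathcal{M}(S)$ to Theorem \ref{lemma:first} on $\mathcal{T}(S)$ by a direct $\varepsilon$-argument on the infima defining the quotient metrics.

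First I would observe that the $\epsilon_0$-relative part of $\mathcal{M}(S)$ lifts exactly to the $\epsilon_0$-relative part of $\mathcal{T}(S)$. Indeed, an element of $\mathrm{Mod}(S)$ acts on a marked hyperbolic surface by precomposition of the marking, which preserves the isometry type of the underlying hyperbolic surface; in particular, the (unordered) multiset of boundary lengths is $\mathrm{Mod}(S)$-invariant. Hence if $\tau\in\mathcal{M}(S)$ lies in the $\epsilon_0$-relative part, then every representative $X\in\pi^{-1}(\tau)$ lies in the $\epsilon_0$-relative part of $\mathcal{T}(S)$.

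For the left-hand inequality, since $d_L(X_1,X_2)\leq \delta_L(X_1,X_2)$ for all $X_1,X_2\in\mathcal{T}(S)$, the monotonicity of the infimum yields
\begin{equation*}
d_L^M(\tau_1,\tau_2)\;=\;\inf_{X_i\in\pi^{-1}(\tau_i)} d_L(X_1,X_2)\;\leq\;\inf_{X_i\in\pi^{-1}(\tau_i)} \delta_L(X_1,X_2)\;=\;\delta_L^M(\tau_1,\tau_2).
\end{equation*}

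For the right-hand inequality, fix $\tau_1,\tau_2$ in the $\epsilon_0$-relative part of $\mathcal{M}(S)$ and let $\varepsilon>0$. By the definition of $d_L^M$, we can choose representatives $X_1\in\pi^{-1}(\tau_1)$ and $X_2\in\pi^{-1}(\tau_2)$ with $d_L(X_1,X_2)<d_L^M(\tau_1,\tau_2)+\varepsilon$. By the first step, both $X_1$ and $X_2$ lie in the $\epsilon_0$-relative part of $\mathcal{T}(S)$, so Theorem \ref{lemma:first} applies and gives a constant $C=C(\epsilon_0)$ with $\delta_L(X_1,X_2)\leq d_L(X_1,X_2)+C$. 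Combining these inequalities,
\begin{equation*}
\delta_L^M(\tau_1,\tau_2)\;\leq\;\delta_L(X_1,X_2)\;\leq\;d_L(X_1,X_2)+C\;<\;d_L^M(\tau_1,\tau_2)+\varepsilon+C.
\end{equation*}
Letting $\varepsilon\to 0$ gives $\delta_L^M(\tau_1,\tau_2)\leq d_L^M(\tau_1,\tau_2)+C$, which completes the plan. There is essentially no obstacle here beyond the first bookkeeping step: the only thing that could have failed is if the $\epsilon_0$-relative condition were not $\mathrm{Mod}(S)$-invariant, in which case a representative realizing the infimum for $d_L^M$ might fall outside the range of applicability of Theorem \ref{lemma:first}. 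Since boundary lengths are mapping class group invariants, this issue does not arise and the reduction to Theorem \ref{lemma:first} is immediate.
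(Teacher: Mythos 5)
Your argument is correct and is precisely the intended one: the paper states the corollary as a direct consequence of Theorem \ref{lemma:first} without spelling out the $\varepsilon$-argument on the infima, and your observation that the $\epsilon_0$-relative condition is $\mathrm{Mod}(S)$-invariant is the only point that needs checking. Nothing further is required.
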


In the case where $S$ is a surface of finite type without boundary, the authors  \cite{AlmostModuli} proved that the length spectrum metric and the Teichm\"uller metric are almost isometric on the moduli space $\mathcal{M}(S)$.
The result can not be generalized to surfaces of finite type with boundary \cite{LSSZ}. However, we ask
the following

\begin{problem}
Let $S$ be a surface of finite type with boundary. Are the arc-length spectrum metric and the Teichm\"uller metric  almost isometric on the moduli space $\mathcal{M}(S)$?
\end{problem}

        \subsection{Metrics on Teichm\"uller spaces of surfaces of infinite type}\label{section:infinite}

            A surface is said to be of \emph{finite type} if its fundamental group is finitely generated.
            Otherwise it is said to be of \emph{infinite type}.
            For more details on  Teichm\"uller spaces of surfaces of infinite type (where the definition of Teichm\"uller space is not unique and more involved), we refer to  \cite{ALPSS}.

            A hyperbolic surface $S$ (possibly with geodesic boundary) is said to be \emph{convex} if for every pair of points $x, y \in S$ and for every arc $\gamma$ with endpoints $x$ and $y$, there exists a geodesic arc of $S$ connecting $x$ and $y$ that is homotopic to $\gamma$ relative to the endpoints.

            A convex hyperbolic surface $S$ with geodesic boundary is \emph{Nielsen convex} if every point of $S$ is contained in a geodesic arc with endpoints contained in some simple closed geodesics of $S$.
            For a hyperbolic surface of finite type to be Nielsen convex is equivalent to be convex with geodesic boundary and of finite area.
            However, for surfaces of infinite type the two notions  maybe not equivalent \cite{ALPSS}.

            In this following, we assume that $S$ is a hyperbolic surfaces of  infinite type and $S$ is Nielsen convex.
Moreover, we assume that all the boundary components of $S$ are of length less than some positive constant.

            Denote by $\mathcal{T}_L(S)$ the length spectrum Teichm\"uller space of $S$, which consists of
            (equivalence classes) of hyperbolic surfaces $X$ that are homeomorphic with $S$ and satisfy
            \begin{equation*}
                d_L (S, X) = \log \sup_{\alpha \in \mathcal{C}(S)} \{ \cfrac{\ell_X(\alpha)}{\ell_S(\alpha)}, \ \cfrac{\ell_S(\alpha)}{\ell_X(\alpha)} \}<\infty.
            \end{equation*}
            We endow $\mathcal{T}_L(S)$ with the length spectrum metric
            \begin{equation*}
                d_L (X,Y) = \log \sup_{\alpha \in \mathcal{C}(S)} \{ \cfrac{\ell_Y(\alpha)}{\ell_X(\alpha)}, \ \cfrac{\ell_X(\alpha)}{\ell_Y(\alpha)} \}.
            \end{equation*}

            We define the \emph{$\epsilon_0$-relative part} of $\mathcal{T}_L(S)$ to be the subset consisting of hyperbolic surfaces with lengths of boundary components bounded above by $\epsilon_0$. By assumption on $S$, the $\epsilon_0$-relative part of $\mathcal{T}_L(S)$ is not a empty set if $\epsilon_0$ is sufficiently large.
We can also define the arc-length spectrum metric by
            \begin{equation*}
                \delta_L (X, Y) = \log \sup_{\alpha \in \mathcal{C}(S) \bigcup \mathcal{B}(S)} \{ \cfrac{\ell_Y(\alpha)}{\ell_X(\alpha)}, \ \cfrac{\ell_X(\alpha)}{\ell_Y(\alpha)} \}.
            \end{equation*}

        As the discussions in the previous sections are not related to the topological type of surface, we have the following theorem.

            \begin{theorem}
                Given $\epsilon_0 > 0$.
                Let $d_L$ and $\delta_L$ be the length spectrum metric and the arc-length spectrum metric on $\mathcal{T}_L(S)$.
                For any $X$ and $Y$ in the $\epsilon_0$-relative part of $\mathcal{T}_L(S)$, we have
                \begin{equation*}
                    d_L (X, Y) \leq \delta_L (X, Y) \leq d_L (X, Y) + C,
                \end{equation*}
                where $C$ is a positive constant depending on $\epsilon_0$.
            \end{theorem}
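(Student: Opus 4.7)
The plan is to verify that the proof of Theorem \ref{lemma:first} transfers essentially verbatim to the infinite-type setting, since its ingredients are purely local. The left-hand inequality $d_L(X,Y) \le \delta_L(X,Y)$ is immediate from the definitions, because closed curves form a subset of $\mathcal{C}(S) \cup \mathcal{B}(S)$ and the supremum is larger when taken over a larger set.

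For the right-hand inequality, I would establish the infinite-type analogue of Theorem \ref{lemma:key}: there is a constant $K$ depending only on $\epsilon_0$ such that for all $X,Y$ in the $\epsilon_0$-relative part of $\mathcal{T}_L(S)$ and all $\beta \in \mathcal{B}(S)$,
\[
\sup\Bigl\{ \tfrac{\ell_X(\beta)}{\ell_Y(\beta)},\ \tfrac{\ell_Y(\beta)}{\ell_X(\beta)} \Bigr\} \le K \cdot \sup_{\alpha \in \mathcal{C}(S)} \Bigl\{ \tfrac{\ell_X(\alpha)}{\ell_Y(\alpha)},\ \tfrac{\ell_Y(\alpha)}{\ell_X(\alpha)} \Bigr\}.
\]
The central observation, already emphasized in the remark following Theorem \ref{lemma:key} and at the end of Section \ref{ProofTheorem2}, is that the constants $K_1, K_2, K_3, K_4$ produced in Sections \ref{ProofTheorem1} and \ref{ProofTheorem2} depend only on $\epsilon_0$ and not on the topology of $S$. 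This is because the entire analysis is confined to a single pair of pants: one only uses the right-angled pentagon identity \eqref{eq:RightAngledPentagon}, the right-angled hexagon identity \eqref{eq:RightAngledHexagon}, the Collar Lemma, and the regular annulus formula \eqref{eq:RegularAnnulis}, all applied inside the pair of pants associated to $\beta$.

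To carry out the argument I would, for a given essential arc $\beta \in \mathcal{B}(S)$ with endpoints on boundary components $\gamma, \gamma'$ (possibly equal), take a regular neighborhood of $\beta \cup \gamma \cup \gamma'$ to obtain a pair of pants $P \subset S$. The Nielsen convexity assumption guarantees that $\beta$ has a geodesic representative and that the remaining boundary components of $P$, namely $\alpha$ (and $\alpha'$ in the case $\gamma = \gamma'$), are essential simple closed curves on $S$, hence lie in $\mathcal{C}(S)$. Propositions \ref{lemma:Diff_Boundary_2} and \ref{lemma:Same_Boundary_2} then apply directly inside $P$ to give
\[
\cfrac{\ell_X(\beta)}{\ell_Y(\beta)} \le K \cdot \max\Bigl\{ 1,\ \cfrac{\ell_X(\alpha)}{\ell_Y(\alpha)},\ \cfrac{\ell_X(\alpha')}{\ell_Y(\alpha')},\ \cfrac{\ell_Y(\gamma)}{\ell_X(\gamma)},\ \cfrac{\ell_Y(\gamma')}{\ell_X(\gamma')} \Bigr\},
\]
with $K = K(\epsilon_0)$. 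By the definition of $\mathcal{T}_L(S)$, the right-hand supremum in terms of $\mathcal{C}(S)$ is finite, so the bound is nontrivial. Swapping the roles of $X$ and $Y$ and taking the supremum over $\beta \in \mathcal{B}(S)$ gives the desired inequality, from which $\delta_L(X,Y) \le d_L(X,Y) + \log K$.

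The main (minor) obstacle is checking that nothing in the finite-type proof secretly used compactness or finite topology: in particular, one must confirm that the Collar Lemma, the existence of a geodesic representative for $\beta$, and the local pair-of-pants decomposition all survive under the Nielsen convexity hypothesis on $S$. Once that is granted, every step in Sections \ref{ProofTheorem1} and \ref{ProofTheorem2} is purely a statement about hyperbolic geometry inside a single pair of pants with boundary lengths at most $\epsilon_0$, and the theorem follows.
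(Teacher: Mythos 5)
Your proposal is correct and matches the paper's own argument: the paper proves this theorem by observing that the constants in Sections \ref{ProofTheorem1} and \ref{ProofTheorem2} depend only on $\epsilon_0$ and that the whole analysis takes place inside the pair of pants determined by a single arc, so the finite-type proof carries over verbatim to the Nielsen convex infinite-type setting. Your added check that Nielsen convexity supplies geodesic representatives and the Collar Lemma is a reasonable (and slightly more careful) elaboration of the same route.
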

\subsection{Further study.}\label{sec:fur} $\\$

Note that the constant $C=C(\epsilon_0)$ in Theorem \ref{lemma:first} only depend on $\epsilon_0$.
\begin{problem}
Does the constant $C=C(\epsilon_0)$ in Theorem \ref{lemma:first} tends to $0$ as $\epsilon_0$ tends to $0$?
\end{problem}

It was shown in \cite{LPST} that for surfaces of finite type with boundary,

\begin{equation}\label{eq:Thurston}
			\begin{split}
				\log \sup_{\alpha \in  \mathcal{B}(S)\cup \mathcal{C}(S)} \{ \cfrac{\ell_X(\alpha)}{\ell_Y(\alpha)}\}
				&= \log \sup_{\alpha \in  \mathcal{B}(S)\cup \partial S} \{ \cfrac{\ell_X(\alpha)}{\ell_Y(\alpha)} \}.
			\end{split}
		\end{equation}
for any $X,Y\in \mathcal{T}(S)$. This gives new formulae for Thurston's metric and the arc-length spectrum metric.
 The above equality $(\ref{eq:Thurston})$ was proved by using Thurston's theory of measured laminations.

 \begin{problem}
Does the equality $(\ref{eq:Thurston})$ hold on Teichm\"uller spaces of surfaces of infinite type?
\end{problem}

\end{document}